\title[Brauer--Manin obstructions for sums of two squares and a power]{Integral Brauer--Manin obstructions for sums of two squares and a power}
\author{Fabian Gundlach}
\address{Mathematisches Institut, Ludwig-Maximilians-Universität München, Theresienstr. 39, 80333 München, Germany}
\email{fabian.gundlach@campus.lmu.de}
\newtheorem{theorem}{Theorem}
\newtheorem{corollary}[theorem]{Corollary}
\newtheorem{lemma}[theorem]{Lemma}
\newtheorem*{hypothesis}{Hypothesis}
\numberwithin{theorem}{section}
\newtheorem{mtheorem}{Theorem}
\theoremstyle{definition}
\newtheorem{definition}[theorem]{Definition}
\newtheorem{remark}[theorem]{Remark}
\newcommand{\Z}{\mathbb{Z}}
\newcommand{\Q}{\mathbb{Q}}
\newcommand{\R}{\mathbb{R}}
\newcommand{\A}{\mathbb{A}}
\newcommand{\X}{\mathfrak{X}}
\renewcommand{\O}{\mathcal{O}}
\newcommand{\leg}[2]{\left(\frac{#1}{#2}\right)}
\DeclareMathOperator{\Br}{Br}
\DeclareMathOperator{\Gal}{Gal}
\DeclareMathOperator{\inv}{inv}
\DeclareMathOperator{\Spec}{Spec}
\let\OldStatex\Statex
\renewcommand{\Statex}[1][3]{%
  \setlength\@tempdima{\algorithmicindent}%
  \OldStatex\hskip\dimexpr#1\@tempdima\relax}
\subjclass[2010]{11P05 (14F22, 11G35)}
\begin{document}

\maketitle

\begin{abstract}
We use Brauer--Manin obstructions to explain failures of the integral Hasse principle and strong approximation away from $\infty$ for the equation $x^2+y^2+z^k=m$ with fixed integers $k\geq3$ and $m$. Under Schinzel's hypothesis~(H), we prove that Brauer--Manin obstructions corresponding to specific Azumaya algebras explain all failures of strong approximation away from $\infty$ at the variable $z$. Finally, we present an algorithm that, again under Schinzel's hypothesis (H), finds out whether the equation has any integral solutions.
\end{abstract}

\setcounter{tocdepth}{1}
\tableofcontents
\newcommand{\sectionbreak}{\clearpage\phantomsection}
\newcommand{\subsectionbreak}{\clearpage\phantomsection}

\section{Introduction}

For integers $k\geq2$ and $m$ we consider the equation
\begin{equation}\label{zkeq}
x^2+y^2+z^k=m.
\end{equation}
For $k=2$ the famous theorem of Gauß about sums of three squares says that (\ref{zkeq}) has an integral solution if and only if $m\geq0$ and $m$ is not of the form $4^u(8l+7)$ for non-negative integers $u$ and $l$. The non-existence of integral solutions can in this case always be explained by the non-existence of real or 2-adic solutions.

Vaughan conjectured in \cite[Chapter 8]{vaughan} that for sufficiently large $m$ there is an integral solution to (\ref{zkeq}) satisfying $x,y,z\geq0$ whenever for each prime $q$ there is some solution $(x,y,z)\in\Z_q$ to the above equation such that $q\nmid\gcd(x,y,z)$. For odd $k\geq3$ such local solutions always exist. His conjecture would then imply the integral Hasse principle for sufficiently large $m$.

This was however disproved by Jagy and Kaplanski in \cite{jagykaplansky}. They gave an elementary proof using quadratic reciprocity that there is no integral solution if $k=9$ and $m=(6p)^3$ for some prime $p\equiv1\mod4$. The remark following their theorem mentions that if $k$ is an odd composite integer, then for infinitely many $m\in\Z$ \cref{zkeq} has no solution.

Dietmann and Elsholtz gave examples of failures of strong approximation in \cite{dietmannpub} for $k=4$ and more general ones in \cite{dietmann} for arbitrary $k\geq2$.

Brauer--Manin obstructions were originally introduced by Manin to explain failures of the Hasse principle for rational points on certain cubic surfaces (see for example \cite{manin}). For an overview of further developments of Brauer--Manin obstructions for the Hasse principle and weak approximation for rational points see \cite{peyre}.

This method was adapted to integral points and applied to quadratic forms such as $x^2+y^2+z^2=m$ by Colliot-Thél\`ene and Xu in \cite{ctxu1}. Further examples of failures of the integral Hasse principle and strong approximation explained by Brauer--Manin obstructions are given in \cite{kreschtschinkel}, \cite{ct7}, \cite{ct8} and \cite{ct9}.

We show that the counterexample to the integral Hasse principle given in \cite{jagykaplansky} can be explained by a Brauer--Manin obstruction (see \Cref{jagythm}).

Furthermore, we systematically find new counterexamples to the integral Hasse principle and strong approximation:

\begin{mtheorem}\label{nweakgeneralm}
The following equations do not fulfill strong approximation away from $\infty$ due to Brauer--Manin obstructions:
\begin{align*}
x^2+y^2+z^k=n^k,&\qquad\text{$k\geq3$ odd, $n\equiv1\mod4$}\\
x^2+y^2+z^k=n^k,&\qquad\text{$k\geq2$ even, $n>0$}
\end{align*}
\end{mtheorem}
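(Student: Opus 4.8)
The plan is to equip the affine $\Z$-scheme $\X$ cut out by $x^2+y^2+z^k=n^k$ with a single quaternion Azumaya algebra and to produce one integral adelic point at which the sum of its local invariants is nonzero. Because $\X$ has the obvious integral point $(0,0,n)$, global reciprocity forces $\sum_v\inv_v\mathcal A(P_v)=0$ at every genuine solution; an adelic point violating this cannot lie in the closure of $\X(\Z)$, which is exactly a Brauer--Manin obstruction to strong approximation away from $\infty$.

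First I would record the factorisation $n^k-z^k=(n-z)\,g(z)$ with $g(z)=\sum_{i=0}^{k-1}n^iz^{k-1-i}$ in the odd case, and the additional factor $n+z$ (so that $n^k-z^k=(n-z)(n+z)h(z)$) in the even case, and take $\mathcal A=(-1,n-z)$. On $\X$ the quantity $(n-z)g(z)=x^2+y^2$ is a norm from $\Q(i)$, so $(-1,(n-z)g(z))=0$ in $\Br(\Q(\X))$ and hence $(-1,n-z)=(-1,g(z))$. Since $n-z$ and $g$ do not vanish simultaneously on $\X$, these two presentations show that $\mathcal A$ has no residue along either horizontal divisor; the remaining work is to check that $\mathcal A$ extends to an Azumaya algebra across the finitely many vertical primes dividing $2kn$ and those where $n-z$ and $g$ share a common factor.

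Next I would evaluate the local invariants $\inv_v\mathcal A(P_v)$ for integral local points, and the upshot I expect is a sharp asymmetry between $p=2$ and the odd primes. At $\infty$ the inequality $n^k-z^k=x^2+y^2\ge0$ forces $z\le n$ (odd $k$), respectively $|z|\le n$ (even $k$, using $n>0$), so $n-z\ge0$ and $\inv_\infty\mathcal A=0$. At a good prime $p\equiv3\bmod4$ solubility of $x^2+y^2=n^k-z^k$ in $\Z_p$ already forces $v_p(n^k-z^k)$, and hence $v_p(n-z)$, to be even, so that $\inv_p\mathcal A=0$; at good primes $p\equiv1\bmod4$ the symbol is trivial as well. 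At $p=2$, by contrast, the hypothesis $n\equiv1\bmod4$ lets one exhibit $\Z_2$-points with $z\equiv2\bmod4$, for which $n-z\equiv3\bmod4$ and therefore $\inv_2\mathcal A=\tfrac12$, even though the equation remains soluble there.

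Finally I would assemble an integral adelic point whose only nonzero local invariant is this $\tfrac12$ at $p=2$, choosing at each bad prime dividing $2kn$ a local point with trivial invariant; its invariant sum is $\tfrac12\neq0$, which gives the obstruction. The main obstacle is precisely the two-adic computation: one must show both that $\inv_2\mathcal A=\tfrac12$ is attained locally and that it cannot be globally compensated without spoiling solubility at the primes $\equiv3\bmod4$ --- this is the exact point where coordination between the factors $n-z$ and $g(z)$ is available globally but can be broken adelically. The even case needs the analogous analysis carried out with the factor $n+z$, where the roles of $p=2$ and the real place, together with $n>0$, must be re-examined.
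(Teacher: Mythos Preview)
Your approach is essentially the paper's: the same quaternion algebra $\mathcal A=(-1,n-z)$ glued to $(-1,g(z))$ via $x^2+y^2=(n-z)g(z)$, the same archimedean and good-prime vanishing, and the same $2$-adic witness producing $\inv_2=\tfrac12$. The paper packages the endgame slightly more cleanly by proving the abstract statement that if $\X(\Z_v)\neq\emptyset$ for all $v$ and $I_w=\{0,\tfrac12\}$ for some $w$, then strong approximation fails; this spares you the ``compensation'' worry in your last paragraph, which is unnecessary --- once $0\in I_p$ for every odd $p$ (take $z=0$ or $z=1$ so that $p\nmid n^k-z^k$) and $\{0,\tfrac12\}\subseteq I_2$, you already have an adelic point with total invariant $\tfrac12$.

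One point to tighten: for even $k$ you do \emph{not} need the factor $n+z$ or a different algebra. The same $\mathcal A=(-1,n-z)$ works, and the reason the hypothesis relaxes from $n\equiv1\bmod4$ to merely $n>0$ is that $r_2(n)^k\equiv1\bmod4$ is automatic when $k$ is even. Your choice $z\equiv2\bmod4$ only hits $\inv_2=\tfrac12$ when $n\equiv1\bmod4$; for general $n>0$ the paper takes $z_1=0$ and $z_2=2^{v_2(n)+1}$, so that $r_2(n-z_1)=r_2(n)$ and $r_2(n-z_2)=r_2(n)-2$ differ modulo $4$ while $r_2(n^k-z_i^k)\equiv r_2(n)^k\equiv1\bmod4$ for both $i$. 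Also, your ``hence $v_p(n-z)$ even'' at good $p\equiv3\bmod4$ is correct but deserves one line: if $p\nmid kn$ and $p\mid n-z$, then $g(z)\equiv kn^{k-1}\not\equiv0\bmod p$, so $v_p(n-z)=v_p(n^k-z^k)$.
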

\begin{proof}
See \Cref{nweakgeneral}.
\end{proof}

Our second goal is to show the fulfillment of the integral Hasse principle and strong approximation away from $\infty$ at the variable $z$ in case there is no Brauer--Manin obstruction via certain elements of the Brauer group. Unfortunately, we can only do this under assumption of Schinzel's hypothesis (H), a generalization of Dirichlet's theorem on primes within arithmetic progressions to prime values of polynomials.

Schinzel's hypothesis (H) has been employed by Colliot-Th\'el\`ene and Sansuc in \cite{ct2} to prove the Hasse principle and weak approximation for rational solutions of equations similar to (\ref{zkeq}). This technique has subsequently been used for example in \cite{ct3}, \cite{ct6}, \cite{ct4}, \cite{witt} and \cite{wei}.

However, as far as we know, for integral points the potential use of Schinzel's hypothesis (H) was so far only briefly mentioned in Remark (v) on pages 618--619 of \cite{ct6}.

\begin{mtheorem}\label{schinzelthmm}
Let $k\geq3$ be an odd integer. Under Schinzel's hypothesis (H) each solution $(x_v,y_v,z_v)_v\in\prod_v\Z_v^3$ to \cref{zkeq} without any Brauer--Manin obstruction generated by Azumaya algebras of the form described in \Cref{azuconst} can be approximated with respect to the variable $z$ by integral solutions to \cref{zkeq}.
\end{mtheorem}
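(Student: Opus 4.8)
The plan is to establish the statement by a fibration argument over the $z$-line, in the spirit of the use of Schinzel's hypothesis~(H) by Colliot-Th\'el\`ene and Sansuc. Since we only need to approximate the coordinate $z$, it suffices, given a family $(x_v,y_v,z_v)_v\in\prod_v\Z_v^3$ of local solutions to \cref{zkeq} with no Brauer--Manin obstruction from the algebras of \Cref{azuconst}, a finite set $S$ of places, and bounds $n_p$ for the finite $p\in S$, to produce an integer $z$ with $z\equiv z_p\bmod p^{n_p}$ for every finite $p\in S$ such that $m-z^k$ is a sum of two integer squares; the values of $x$ and $y$ are then irrelevant. Recall that a non-negative integer is a sum of two squares exactly when it is a norm from $\Q(i)$, i.e.\ when every prime $\equiv3\bmod4$ divides it to an even power; so the goal becomes to arrange that $m-z^k>0$ and that $m-z^k$ is everywhere locally a norm from $\Q(i)$.

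First I would enlarge $S$ to contain $\infty$, $2$, every prime dividing $m$, the primes of bad reduction of a fixed integral model, all sufficiently small primes, and all primes dividing the discriminants and pairwise resultants of the monic integral irreducible factors $P_i$ of $z^k-m$ over $\Q$ (for $m\neq0$ the polynomial $z^k-m$ is separable, so the $P_i$ are distinct; the case $m=0$ is disposed of separately). Choose $z_0\in\Z$ with $z_0\equiv z_p\bmod p^{n_p}$ for the finite $p\in S$, fixing thereby the local invariants of the algebras $\mathcal A$ of \Cref{azuconst} at $z_0$ --- these depend only on the coordinate $z$ and are locally constant in it, being of the shape $(-1,g(z))$ with $g$ a product of some of the $P_i$; a further amount of freedom, used below, is gained by enlarging $S$ with auxiliary primes and prescribing at them local solutions with chosen invariants. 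Put $z=z_0+Dt$ with $D$ a high power of $\prod_{p\in S}p$ and $t$ a new variable. For each $i$ one then has $P_i(z_0+Dt)=c_i\,Q_i(t)$ with $c_i\in\Z$ supported on $S$ and $Q_i\in\Z[t]$ primitive and irreducible, and, after replacing $t$ by $-t$ if needed, with positive leading coefficient; since $k$ is odd, pushing $t$ in the appropriate direction also makes $m-z^k=-\prod_i c_i\prod_i Q_i(t)>0$.

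The core step is to single out a congruence class for $t$ --- equivalently, to pin $z$ down modulo a higher power of $2$ and modulo the auxiliary primes, beyond the precision demanded by the approximation --- on which (i) every $Q_i$ takes values $\equiv1\bmod4$, and (ii) the product $\prod_i Q_i$ has no fixed prime divisor. Granting (i)--(ii), Schinzel's hypothesis~(H) applied to the $Q_i$ yields infinitely many $t$ in that class for which all $\ell_i:=Q_i(t)$ are primes $\equiv1\bmod4$; for $t$ large they are distinct and lie outside $S$, so $m-z^k$ is positive, is a norm from $\Q_v(i)$ at every $v\in S$ (because $z\equiv z_v\bmod p_v^{n_v}$ and $m-z_v^k=x_v^2+y_v^2$ is such a norm), is divisible exactly once by each $\ell_i\equiv1\bmod4$, and is a unit at every other place --- hence it is globally a norm from $\Q(i)$ and therefore a sum of two integer squares, which finishes the proof. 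Note that (i) is at least consistent: the norm conditions on $m-z^k$ at $2$ and at the odd primes of $S$ together with $m-z^k>0$ force $\prod_i\ell_i\equiv1\bmod4$ for every admissible $z$, since for odd $p\in S$ the exponent of $p$ in $m-z^k$ equals that in $m-z_p^k$, which is even whenever $p\equiv3\bmod4$.

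The main obstacle is producing the congruence class in (i)--(ii), and this is exactly where the absence of a Brauer--Manin obstruction is used. For $v\in S$ the invariant $\inv_v\mathcal A$ of each algebra of \Cref{azuconst} at the point we build agrees with its value at the given local solution, since it depends only on $z\bmod p_v^{n_v}$; by Hilbert reciprocity the sum of $\inv_v\mathcal A$ over $v\notin S$ is then determined, and for $v=\ell_i$ this invariant is $\tfrac12$ precisely when $\ell_i\equiv3\bmod4$ and $P_i$ enters the quaternion symbol defining $\mathcal A$. Thus the attainable patterns of residues $\ell_i\bmod4$ are cut out by a system of $\Z/2$-linear conditions indexed by the algebras of \Cref{azuconst}, and the hypothesis that $(x_v,y_v,z_v)_v$ is orthogonal to all of them is what makes the all-ones pattern $\ell_i\equiv1\bmod4$ a solution of that system; any residual obstruction is absorbed by the auxiliary primes introduced above. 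The delicate points --- that the construction in \Cref{azuconst} captures all the obstructions relevant to the $z$-fibration, that the freedom coming from auxiliary primes suffices, and that the no-fixed-prime-divisor hypothesis of Schinzel's~(H) survives both the passage to the $Q_i$ and the restriction modulo $4$ --- are where the real work lies, and are what the construction and the lemmas preceding this theorem are set up to handle.
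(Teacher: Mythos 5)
Your overall strategy is the same as the paper's: this is the Colliot-Th\'el\`ene--Sansuc fibration method over the $z$-line, factoring $m-z^k$ into irreducibles, applying Schinzel's hypothesis (H) with congruence conditions (the paper's \Cref{schinzelhelp}) to make each non-fixed factor a prime $\equiv1\bmod4$, and using reciprocity together with the no-obstruction hypothesis to control residues modulo $4$. The bookkeeping you defer (positivity, the fixed part being a local norm everywhere, survival of the no-fixed-divisor condition) goes through essentially as you indicate.

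The genuine gap is in the one step you yourself flag as ``where the real work lies'', and your proposed way around it does not work. You need, for \emph{each individual} irreducible factor $P_i$, the product formula $\prod_v(P_i(z_v),-1)=1$; only this forces the corresponding new prime $\ell_i$ to be $\equiv1\bmod4$. The algebras of \Cref{azuconst} only give you $\prod_v\bigl(\prod_{i\in w}P_i(z_v),-1\bigr)=1$ for certain subsets $w$ of the factors, so you must show that these subsets span enough of $(\Z/2)^{\{P_i\}}$ to isolate each singleton. Your hedge that ``any residual obstruction is absorbed by the auxiliary primes'' is a non sequitur: prescribing extra local points at new primes cannot enlarge the span of the linear conditions imposed by a fixed family of algebras, and if that family were too small the statement would simply be false (as it is for even $k$; see the remark following \Cref{S1kn}). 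What saves the theorem is the precise structure of the factorization: taking $a$ maximal with $m=n^a$ and $b=k/a$, the irreducible factors are $f_d(X)=(-n)^{\varphi(d)}\phi_d(-X^b/n)$ for $d\mid a$ (irreducibility is \Cref{phiirred} and uses the maximality of $a$), and the algebra attached to the decomposition $(a/d,db,n^d)$ has symbol $(n^d-z^{db},-1)=\prod_{d'\mid d}(f_{d'}(-z),-1)$. The resulting system of conditions, indexed by the divisor lattice of $a$, is unitriangular, and induction over divisors (the ``Claim'' in the paper's proof of \Cref{schinzelthm}) extracts the individual product formulas. Without identifying the factors and verifying this spanning property, your argument is incomplete at its crux.
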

\begin{proof}
See \Cref{schinzelthm}.
\end{proof}

Jagy and Kaplanski conjectured in \cite{jagykaplansky} that (\ref{zkeq}) has an integral solution whenever $k$ is an odd prime.
\begin{mtheorem}\label{primekm}
Let $k$ be an odd prime. Under Schinzel's hypothesis (H) every integer is of the form $x^2+y^2+z^k$ for integral $x,y,z\in\Z$.
\end{mtheorem}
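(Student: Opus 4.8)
The plan is to derive \Cref{primekm} from \Cref{schinzelthmm}, after checking two things: that a Brauer--Manin obstruction of the type appearing in \Cref{schinzelthmm} can never arise here, and that \cref{zkeq} is solvable in $\prod_v\Z_v^3$. I would begin by disposing of a trivial case. Since $k$ is odd, every perfect $k$-th power in $\Z$ is of the shape $n^k$ with $n\in\Z$, and then $(x,y,z)=(0,0,n)$ solves \cref{zkeq}; so assume henceforth that $m$ is not a perfect $k$-th power.

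Under this assumption $z^k-m$ is irreducible over $\Q$: for $k$ an odd prime, the binomial $X^k-m$ is reducible over $\Q$ only if $m\in\Q^k$, and for $m\in\Z$ that forces $m$ to be a perfect $k$-th power. I would then invoke the description in \Cref{azuconst}: the Azumaya algebras considered there are built from nontrivial factorizations of $z^k-m$ (equivalently, from proper subextensions of $\Q[z]/(z^k-m)$), so when $z^k-m$ is irreducible they are all constant. Constant elements of $\Br(\Q)$ never obstruct --- their local invariants sum to $0$ against every adelic point, by the reciprocity sequence of global class field theory --- so \emph{every} point of $\prod_v\Z_v^3$ on \cref{zkeq} satisfies the hypothesis of \Cref{schinzelthmm}.

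It thus suffices to exhibit one such point; then \Cref{schinzelthmm} produces an integral solution (in fact one approximating it in the $z$-coordinate). I would check everywhere-local solvability place by place. At the real place, since $k$ is odd we may take $z\ll0$ with $z^k\le m$, whereupon $x^2+y^2=m-z^k\ge0$ has a real solution. At an odd prime $p$, the two numbers $m-0^k$ and $m-1^k$ differ by $1$, so at least one choice $z_p\in\{0,1\}$ makes $u:=m-z_p^k$ a $p$-adic unit; as every element of $\mathbb F_p$ is a sum of two squares, Hensel's lemma shows $u$ is a sum of two squares in $\Z_p$. At $p=2$: if $m$ is odd it is a $k$-th power in $\Z_2$ (the map $x\mapsto x^k$ is bijective on $\Z_2^\times$ for $k$ odd), so $(0,0,m^{1/k})$ works; if $m$ is even, pick $z_2$ odd with $z_2\equiv m-1\pmod 8$, so that $m-z_2^k$ is a $2$-adic unit congruent to $1$ modulo $8$ and hence a square in $\Z_2$, and take $(x_2,y_2,z_2)=(\sqrt{m-z_2^k},0,z_2)$.

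The step I expect to be the main obstacle is extracting from \Cref{azuconst} the precise statement that irreducibility of $z^k-m$ makes all of the relevant Azumaya algebras constant, so that \Cref{schinzelthmm} applies with no side condition. The local computations are routine, the only mild fuss being at $p=2$; note that it is here that primality of $k$ (not merely oddness) enters, since for composite odd $k$ the polynomial $z^k-m$ can be reducible without $m$ being a perfect $k$-th power --- exactly the mechanism behind the counterexamples for composite $k$ noted in \cite{jagykaplansky}.
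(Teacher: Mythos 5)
Your proposal is correct and follows essentially the same route as the paper: the trivial case $m=n^k$, the observation that for prime $k$ the only Azumaya algebra from \Cref{azuconst} available is the one attached to $(a,b,n)=(1,k,m)$, which gives no obstruction (the paper's \Cref{S1kn}), everywhere-local solvability (the paper's \Cref{real,odd,a1I2}), and then \Cref{schinzelthm}. The only quibble is the phrasing of your last sentence: primality of $k$ enters in the factorization/irreducibility step (ruling out $m=n^a$ for a proper divisor $1<a<k$), not in the computation at $p=2$, which needs only that $k$ is odd.
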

\begin{proof}
See \Cref{primek}.
\end{proof}

For each prime $p$ and $x\in\Q_p^\times$ let $r_p(x):=\frac x{p^{v_p(x)}}$.

\begin{mtheorem}\label{abthmm}
Let $k$ be the product of two primes $a,b\equiv1\mod4$ and let $m\in\Z\setminus\{0\}$.

For the existence of integral solutions to \cref{zkeq}, it is necessary and under Schinzel's hypothesis (H) also sufficient that the following two statements are both true.
\begin{itemize}
\item There is no $n\equiv6\mod8$ such that $m=n^a$ and for each prime $p\equiv3\mod4$ dividing $n$:

$b\nmid v_p(n)$ or $2\mid v_p(n)$ or there is no $z'\in\{0,\dots,p-1\}$ such that
\[p\mid r_p(n)^{a-1}+\dots+z'^{(a-1)b}.
\]
\item There is no $n\equiv6\mod8$ such that $m=n^b$ and for each prime $p\equiv3\mod4$ dividing $n$:

$a\nmid v_p(n)$ or $2\mid v_p(n)$ or there is no $z'\in\{0,\dots,p-1\}$ such that
\[p\mid r_p(n)^{b-1}+\dots+z'^{(b-1)a}.
\]
\end{itemize}
\end{mtheorem}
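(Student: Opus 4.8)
The plan is to combine the sufficiency result \Cref{schinzelthmm} with the unconditional necessity of the Brauer--Manin obstruction, and then to make the obstruction coming from the algebras of \Cref{azuconst} completely explicit through a place-by-place computation of Hilbert symbols. First I would record the reduction. If an integral solution exists, it is in particular an adelic point which, by the reciprocity law $\sum_v \inv_v \mathcal A(P_v)=0$, is orthogonal to every Azumaya algebra; this gives necessity unconditionally. Conversely, by \Cref{schinzelthmm} (under Schinzel's hypothesis (H)) any adelic solution orthogonal to the algebras of \Cref{azuconst} is approximated by, hence forces the existence of, an integral solution. So it suffices to decide, in terms of $m$, whether some adelic solution is orthogonal to all these algebras. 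Since $k=ab$ is odd, the archimedean place is harmless: for any target we may take $z_\infty$ with $z_\infty^k<m$, so that $m-z_\infty^k>0$ is a sum of two squares in $\R$ and contributes invariant $0$.

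Next I would identify the relevant algebras. Since $a$ is odd, when $m=n^a$ we have the factorization $m - z^{ab} = (n-z^b)\,Q(z)$ with $Q(z)=\sum_{i=0}^{a-1} n^{a-1-i} z^{bi}$. On the variety $m-z^k = x^2+y^2 = \N_{\Q(i)/\Q}(x+iy)$ is a norm, so $(-1,(n-z^b)Q(z))=0$ and hence $(-1,n-z^b)=(-1,Q(z))$ defines an Azumaya algebra $\mathcal A_a$; I expect this (and the symmetric $\mathcal A_b$ obtained from $m=n^b$ and $z^{ab}=(z^a)^b$) to be the algebras of \Cref{azuconst}. The two cases $m=n^a$ and $m=n^b$ are precisely the two bullet points, and swapping $a\leftrightarrow b$ is the symmetry of the statement.

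Then comes the arithmetic heart: for each place $v$ I would compute the set $S_v\subseteq\{0,\tfrac12\}$ of invariants $\inv_v(-1,n-z_v^b)$ attainable over local solutions $z_v$. A Brauer--Manin obstruction from $\mathcal A_a$ exists exactly when every $S_v$ is a singleton and an odd number of them equal $\{\tfrac12\}$, i.e.\ when the attainable total invariant is forced to be $\tfrac12$. At $p\equiv1\pmod4$ the symbol $(-1,\cdot)_p$ is trivial, so $S_p=\{0\}$. At a prime $p\equiv3\pmod4$ with $p\mid n$, choosing $z_p$ a unit gives invariant $0$, so $0\in S_p$ always; writing $n=p^e r_p(n)$ and $z=p^f w$, the value $\tfrac12$ is attainable precisely when one can make $v_p(n-z^b)$ and $v_p(Q(z))$ both odd, which via the identity $(r_p(n)-w^b)\,R(w)=r_p(n)^a-w^{ab}$, with $R(w)=\sum_{i} r_p(n)^{a-1-i} w^{bi}$, and Hensel's lemma reduces to $b\mid e$, $e$ odd, and the existence of $z'$ with $p\mid R(z')$. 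Hence $S_p=\{0\}$ if and only if the displayed condition on $p$ in the theorem holds. Finally, the $2$-adic computation (using $a-1\equiv0\pmod4$ and $z^b\equiv z\pmod8$ for odd $z$) shows that when $n\equiv6\pmod8$ every local solution at $2$ has $z_2$ odd with $n-z_2^b\equiv3\pmod4$, so $S_2=\{\tfrac12\}$; for every other class of $n$ one finds $0\in S_2$. Assembling these, $\mathcal A_a$ obstructs exactly when $m=n^a$ with $n\equiv6\pmod8$ and the per-prime condition holds for all $p\equiv3\pmod4$ dividing $n$; symmetrically for $\mathcal A_b$. Orthogonality to all algebras of \Cref{azuconst} is therefore equivalent to the two displayed ``there is no $n$'' statements, which by the reduction completes both directions.

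The \emph{main obstacle} I anticipate is the local analysis at the bad primes, and it splits in two. At $2$ the delicate point is showing that local solvability (the oddness modulo $4$ of the odd part of $m-z_2^k$) couples the two factors so tightly that the invariant of $\mathcal A_a$ is forced, and that this forcing singles out exactly $n\equiv6\pmod8$; this is where the precise congruence in the theorem is produced. At $p\equiv3\pmod4$ dividing $n$ the difficulty is to upgrade the mere existence of a root $z'$ of $R$ modulo $p$ into a $p$-adic $z_p$ realizing odd valuations of both factors while keeping $v_p(m-z_p^k)$ even, which is where the hypotheses $b\mid v_p(n)$ and $2\nmid v_p(n)$ enter. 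I would also need to verify that primes $p\equiv3\pmod4$ with $p\nmid n$ (and primes dividing $n$ that fail $b\mid v_p(n)$) never secretly supply additional flexibility, so that the primes listed in the theorem are genuinely the only ones that matter.
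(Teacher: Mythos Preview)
Your plan is correct and follows the paper's route: reduce via \Cref{schinzelthmm} and the Brauer--Manin reciprocity to the question of whether some adelic point is orthogonal to the algebras of \Cref{azuconst}, then compute the local invariant sets $S_v$ place by place exactly as the paper does in \Cref{real}, \Cref{1mod4}, \Cref{jagypre1}, \Cref{jagypre2}, \Cref{12insidenmid}, \Cref{6mod8}, \Cref{not6mod8}. The paper packages the reduction as the intermediate \Cref{abgeneral} and then translates its conditions on $I_2$ and $I_p$ into the explicit congruence conditions via those local lemmas; you are doing the same computations inline. Two small bookkeeping points you should make explicit: the algebra with parameters $(1,k,m)$ is always present and imposes no constraint (this is \Cref{S1kn}, and is why your list of ``relevant'' algebras can be reduced to $\mathcal A_a$ and $\mathcal A_b$), and the case where $m$ is a $k$-th power should be disposed of directly by the solution $(0,0,\sqrt[k]{m})$, after which at most one of $\mathcal A_a,\mathcal A_b$ can exist (for $a\neq b$) so there is no issue of simultaneously realising orthogonality to both.
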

\begin{proof}
See \Cref{abthm}.
\end{proof}

For $m\in\Z$ and odd $k\geq1$ an algorithm is given in \Cref{algosection}, which, using Schinzel's hypothesis (H), determines whether $m$ is of the form $x^2+y^2+z^k$.

Finally, lists of small positive integers not of the form $x^2+y^2+z^k$ are given for small odd $k$.

\textbf{Acknowledgements.} We thank Jean-Louis Colliot-Thélène, Christian Elsholtz, Dasheng Wei and the referee for their comments.

\section{Preliminaries}

From now on, let $K$ be a number field, $\Omega$ the set of places of $K$ and $\Omega_\infty\subseteq\Omega$ the set of archimedian places of $K$. Let $K_v$ be the completion of $K$ with respect to $v$ for each $v\in\Omega$. Let $\O_v$ be the corresponding valuation ring for each $v\in\Omega\setminus\Omega_\infty$ and let $\O_v:=K_v$ for each $v\in\Omega_\infty$. The valuation associated to $v\in\Omega\setminus\Omega_\infty$ is called $v_v$. The ring $\O:=\{x\in K\mid v_v(x)\geq0\ \forall v\in\Omega\setminus\Omega_\infty\}$ is called the ring of integers of $K$.

In this section, let $X$ be a variety over $K$.

For topological rings $R$ over $K$, the set of $R$-rational points $X(R)$ obtains the induced topology.

Given a class of varieties, one often wants to know whether the existence of local solutions implies the existence of global solutions, or, even better, whether the existence of local integral solutions implies the existence of integral solutions. This leads to

\begin{definition}\label{adeledef}
Let $S$ be a subset of $\Omega$.
The set of $S$-adeles
\[
\A_S:=\Big\{(x_v)_{v\in\Omega\setminus S}\in\prod_{v\in\Omega\setminus S}K_v \;\Big\vert\; x_v\in\O_v\text{ for almost every }v\in\Omega\setminus S\Big\}
\]
is a ring by coordinatewise addition and multiplication.
The ring $\A:=\A_\emptyset$ is called the \emph{adele ring of $K$}.
The sets
\[
\Big\{\prod_{v\in\Omega\setminus S}A_v\;\Big\vert\; A_v=\O_v\text{ for almost all }v\in\Omega\setminus S\text{ and $A_v$ open in $K_v$ for all }v\in\Omega\setminus S\Big\}
\]
define a basis for the topology on $\A_S$.

For $S\subsetneq\Omega$ the field $K$ may be diagonally embedded into $\A_S$ as for every $x\in K$ there are only finitely many $v$ such that $x\not\in\O_v$. Below, the images of these embeddings are identified with $K$.
\end{definition}

Given a variety $X$ and some $S\subsetneq\Omega$, obviously $X(K)\subseteq X(\A_S)$. It is of interest how $X(K)$ relates to $X(\A_S)$.

\begin{definition}
We say that the variety $X$ satisfies \emph{strong approximation away from $S\subset\Omega$} if $\overline{X(K)}=X(\A_S)$ (where the closure is taken inside $X(\A_S)$), i.e., if $X(K)$ is dense in $X(\A_S)$.
\end{definition}

An introduction to Brauer--Manin obstructions can be found in \cite{skorobogatov}.

\begin{definition}[{\cite[Chapter IV]{milneetale}}]
An $\O_X$-algebra $A$ is called an \emph{Azumaya algebra over $X$} if it is coherent (i.e., there is some open covering by affine schemes $U_i\cong\Spec A_i$, such that $A|_{U_i}\cong \widetilde M_i$ for some finitely generated $A_i$-module $M_i$ for each $i$) and if $A_x\otimes_{\O_{X,x}}\kappa(x)$ is a central simple algebra over the residue field $\kappa(x)$ for every $x\in X$.

If furthermore $k$ is a field extension of $K$, then for each $x\in X(k)$ (i.e., each morphism $x: \Spec k\rightarrow X$ of $K$-schemes) let $A(x):=A_{x(\eta)}\otimes_{\O_{X,x(\eta)}}k$.
\end{definition}

\begin{remark}
If $A$ is an Azumaya algebra over $X$, then $A(x)$ is a central simple $k$\nobreakdash-algebra for each $x\in X(k)$, as $A_{x(\eta)}\otimes_{\O_{X,x(\eta)}}\kappa(x(\eta))$ is a central simple $\kappa(x(\eta))$-algebra and $k$ is a $\kappa(x(\eta))$-algebra by the morphism $x$, such that
\[A(x)\cong \left(A_{x(\eta)}\otimes_{\O_{X,x(\eta)}}\kappa(x(\eta))\right)\otimes_{\kappa(x(\eta))}k.
\]
\end{remark}

\begin{definition}
For $v\in\Omega$ let $\inv_v:\Br(K_v)\rightarrow\Q/\Z$ be the invariant map from local class field theory. For simplicity, we will refer to the class of $a\in\Q$ in $\Q/\Z$ by $a$, too.
\end{definition}

\begin{theorem}[Brauer--Manin obstruction, {\cite[Chapter 5.2]{skorobogatov}} and {\cite[Section 2]{ct8}}]\label{skorothm}
For every Azumaya algebra $A$ over $X$ the set
\[
X(\A)^A:=\bigg\{(x_v)_v\in X(\A)\;\bigg|\;\sum_v\inv_v(A_v(x_v))=0\bigg\}
\]
contains $\overline{X(K)}$.

Hence, for each $S\subsetneq\Omega$ the set $X(\A_S)^A\subseteq X(\A_S)$ defined as
\[
\bigg\{(x_v)_{v\in\Omega\setminus S}\in X(\A_S)\;\bigg|\;\exists(x_v)_{v\in S}\in X(\A_{\Omega\setminus S})\textnormal{ such that }\sum_v\inv_v(A_v(x_v))=0\bigg\}
\]
contains $\overline{X(K)}$.
\end{theorem}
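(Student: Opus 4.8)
The statement to prove is Theorem \ref{skorothm}, the Brauer--Manin obstruction. I have to show that for an Azumaya algebra $A$ over a variety $X$ over $K$, the closure $\overline{X(K)}$ inside $X(\A)$ is contained in $X(\A)^A$, and then deduce the analogous statement for $X(\A_S)$. The plan is to first establish that the map $(x_v)_v \mapsto \sum_v \inv_v(A_v(x_v))$ is well-defined and locally constant on $X(\A)$, and then to show it vanishes identically on the image of $X(K)$; continuity then forces vanishing on the closure.

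**Well-definedness and continuity.** First I would check that for an adelic point $(x_v)_v \in X(\A)$, the sum $\sum_v \inv_v(A_v(x_v))$ is actually finite. The point is that $A$ is defined over $X$, hence (after choosing a model) extends to an Azumaya algebra over an integral model $\X$ of $X$ over $\O_{K,T}$ for some finite set $T$ of places; for $v\notin T$ with $x_v\in\X(\O_v)$ — which holds for almost all $v$ — the class $A_v(x_v)$ comes from an Azumaya algebra over $\O_v$, and such a class is trivial in $\Br(K_v)$, so $\inv_v(A_v(x_v))=0$. Hence only finitely many terms are nonzero. Next, continuity: the evaluation $x_v\mapsto A_v(x_v)\in\Br(K_v)$ is locally constant on $X(K_v)$ (specializing the Azumaya algebra in a $v$-adic neighborhood gives Brauer-equivalent algebras), and combined with the almost-everywhere vanishing this shows $(x_v)_v\mapsto\sum_v\inv_v(A_v(x_v))$ is locally constant on $X(\A)$, in particular continuous with values in the discrete group $\Q/\Z$.

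**Vanishing on $X(K)$ and passage to the closure.** For a global point $x\in X(K)$, the algebra $A(x)$ is a central simple algebra over $K$, i.e., a class in $\Br(K)$, and $A_v(x_v)=A(x)\otimes_K K_v$ is its image in $\Br(K_v)$. The fundamental exact sequence of global class field theory, $0\to\Br(K)\to\bigoplus_v\Br(K_v)\xrightarrow{\sum\inv_v}\Q/\Z\to0$, then gives $\sum_v\inv_v(A_v(x_v))=0$. Thus $X(K)\subseteq X(\A)^A$; since $X(\A)^A$ is the preimage of $\{0\}$ under a continuous map to a discrete group, it is closed, hence contains $\overline{X(K)}$. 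For the $S$-adelic version: the projection $X(\A)\to X(\A_S)$ is continuous and open, and $X(\A_S)^A$ as defined is precisely the image of $X(\A)^A$ under this projection together with the condition that the missing components can be filled in; since $X(K)$ maps into $X(\A_S)$ with dense image in $\overline{X(K)}$ and each point of $X(K)$ lies in $X(\A)^A$, its image lies in $X(\A_S)^A$, and continuity of the projection gives $\overline{X(K)}\subseteq X(\A_S)^A$.

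**Main obstacle.** The genuinely substantive inputs here are external: the finiteness of the Brauer group of a local ring (so that $A_v(x_v)$ is trivial for integral $x_v$ at good places), the local constancy of Brauer-class evaluation, and — most importantly — the reciprocity law $\sum_v\inv_v = 0$ on the image of $\Br(K)$ from global class field theory. Since the problem explicitly cites \cite[Chapter 5.2]{skorobogatov} and \cite[Section 2]{ct8}, I expect the "proof" to consist mostly of assembling these standard facts rather than proving them from scratch; the only delicate bookkeeping is checking that the almost-everywhere triviality survives the choice of integral model and that the $S$-adelic reformulation is literally equivalent to the projected condition.
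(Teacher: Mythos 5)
The paper does not actually prove this theorem: it is quoted with references to \cite[Chapter 5.2]{skorobogatov} and \cite[Section 2]{ct8}, so there is no internal proof to compare against. Your reconstruction is the standard argument from those sources and is essentially correct: spreading $A$ out to an integral model over $\O_{K,T}$, using $\Br(\O_v)=0$ to get vanishing of almost all local invariants, local constancy of the evaluation map, the Albert--Brauer--Hasse--Noether reciprocity sequence to kill the sum on $X(K)$, and closedness of the preimage of $0$ under a locally constant map.

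The one place where your justification is not quite right is the final $S$-adelic step. Continuity (or openness) of the projection $\pi:X(\A)\to X(\A_S)$ by itself only gives $\pi(\overline{X(K)})\subseteq\overline{\pi(X(\A)^A)}$, which is the wrong direction: you need $X(\A_S)^A$ to be \emph{closed} in $X(\A_S)$, and the image of a closed set under an open map need not be closed. The repair is immediate from what you have already established: the partial sum $f(y)=\sum_{v\notin S}\inv_v(A_v(y_v))$ is itself well-defined and locally constant on $X(\A_S)$ by the same good-reduction argument, and $X(\A_S)^A=f^{-1}(-V)$ where $V$ is the set of values of $\sum_{v\in S}\inv_v(A_v(\cdot))$ on $X(\A_{\Omega\setminus S})$; a preimage of any subset of the discrete group $\Q/\Z$ under a locally constant map is both open and closed, so $X(\A_S)^A$ is closed and contains $X(K)$, hence contains $\overline{X(K)}$. (Equivalently: given $y\in\overline{X(K)}$, pick $x\in X(K)$ in the open set where $f$ equals $f(y)$ and use the $S$-components of $x$ to witness membership of $y$ in $X(\A_S)^A$.) With this one-line fix the proof is complete and matches the cited literature.
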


\section{Azumaya algebra}

In this section, we will define an Azumaya algebra over the scheme defined by \cref{zkeq}. We will then compute its local invariants.

\subsection{Construction}\label{azuconst}

Let $K=\Q$. Recall, that for each prime $p$ and $x\in\Q_p^\times$ we defined $r_p(x):=\frac x{p^{v_p(x)}}$.

For each $v\in\Omega$ let $(\cdot,\cdot):\Q_v^\times\times\Q_v^\times\rightarrow\{\pm1\}$ denote the Hilbert symbol of degree 2 (i.e., $(a,b)=1$ if and only if there exist $x,y\in\Q_v$ such that $a=y^2-bx^2$).

For each ring $R$ of characteristic different from 2 and $a,b\in R^\times$ let $\leg{a,b}{R}$ denote the quaternion algebra over $R$ with parameters $a,b$ (i.e., it is a free $R$-module with basis $1,i,j,ij$ such that $i^2=a$, $j^2=b$ and $ji=-ij$).

\begin{lemma}\label{quathilblemma}
For all $a\in\Q_v^\times$, we have:
\begin{align*}
(a,-1)=1&\Leftrightarrow\exists x,y\in\Q_v:a=x^2+y^2\\
&\Leftrightarrow\inv_v\leg{a,-1}{\Q_v}=0\\
&\Leftrightarrow\begin{cases}
a>0,&v=\infty,\\
r_2(a)\equiv1\mod4,&v=2,\\
0=0,&v\equiv1\mod4,\\
2\mid v_v(a),&v\equiv3\mod4.
\end{cases}
\end{align*}
For $v$-adic integers $a\in\Z_v^\times$, we even have:
\[
(a,-1)=1\Leftrightarrow\exists x,y\in\Z_v:a=x^2+y^2.
\]
\end{lemma}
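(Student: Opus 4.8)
The plan is to treat the four claimed equivalences as a chain, proving the arrows that are genuinely substantive and quoting standard facts for the rest. The second equivalence, $\exists x,y\in\Q_v\colon a=x^2+y^2 \Leftrightarrow \inv_v\leg{a,-1}{\Q_v}=0$, is essentially the definition of the Hilbert symbol together with the fact that the quaternion algebra $\leg{a,-1}{\Q_v}$ is split (i.e.\ has trivial class in $\Br(\Q_v)$, equivalently $\inv_v=0$) precisely when the associated norm form $x^2+y^2$ represents $a$; equivalently, $(a,-1)=1$ by the definition of $(\cdot,\cdot)$ given just above (take $b=-1$, so $a=y^2+x^2$). So the first two lines are almost immediate from the definitions, and I would state them as such.

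The real content is the third equivalence, the explicit case-by-case description of when $(a,-1)=1$. First I would reduce to the case $a\in\Z_v^\times$ (or $a\in\Q_v^\times$ with controlled valuation): writing $a=p^{v_p(a)}r_p(a)$ for $v=p$ a finite place, bilinearity of the Hilbert symbol gives $(a,-1)=(p,-1)^{v_p(a)}(r_p(a),-1)$, so everything comes down to computing $(p,-1)$ and $(u,-1)$ for units $u$. For $v=\infty$: $a=x^2+y^2$ in $\R$ iff $a>0$, trivially. For $v=p$ odd: $(u,-1)=1$ for all units $u$ since $-1$ is a square times a unit and the residue field $\F_p$ already has $|\F_p^{\times2}|$ large enough that the quadratic form $x^2+y^2$ is universal on units — concretely $x^2+y^2\equiv u\pmod p$ is solvable for every $u$ (a counting/Chevalley–Warning argument), and Hensel lifts it; then $(p,-1)=1$ iff $-1$ is a square mod $p$, i.e.\ $p\equiv1\bmod4$, which gives the stated split into the "$0=0$" case ($p\equiv1\bmod4$, where the valuation of $a$ is irrelevant) and the "$2\mid v_p(a)$" case ($p\equiv3\bmod4$, where $(p,-1)=-1$ forces the exponent to be even). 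For $v=2$: I would compute directly from the standard table of the Hilbert symbol over $\Q_2$, or from the criterion that $x^2+y^2\equiv a\pmod 8$ is solvable, to get $(a,-1)=1\Leftrightarrow r_2(a)\equiv1\bmod4$; one checks $(2,-1)=1$ and $(u,-1)=1\Leftrightarrow u\equiv1\bmod4$ for $2$-adic units $u$.

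Finally, for the integral refinement: if $a\in\Z_v^\times$ and $(a,-1)=1$, then $a=x^2+y^2$ with $x,y\in\Q_v$, and I must upgrade to $x,y\in\Z_v$. For $v=\infty$ there is nothing to do ($\Z_\infty=\R$). For finite $v=p$: since $v_p(a)=0$, a solution $(x,y)\in\Q_p^2$ with $x^2+y^2=a$ a unit cannot have both coordinates of negative valuation compensating — more precisely, $v_p(x^2+y^2)=0$ together with the ultrametric inequality forces $\min(v_p(x),v_p(y))\geq0$, so $x,y\in\Z_p$ automatically. (If $\min(v_p(x),v_p(y))=-t<0$ then $p^{2t}a=(p^tx)^2+(p^ty)^2$ with the right side a sum of two elements one of which is a unit times $p^{2t}$... one has to be slightly careful when $p=2$ or when the two valuations coincide, but in those cases the leading terms either survive or one re-derives solvability mod $p$, contradicting $v_p(a)=0$; the cleanest route is to note that $x^2+y^2$ with $v_p(x),v_p(y)$ not both $\geq0$ has even $p$-adic valuation $\leq-2$ unless a cancellation occurs, and a cancellation in $x^2+y^2=(x+iy)(x-iy)$ over $\Q_p(i)$ again forces a contradiction with $v_p(a)=0$.)

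The main obstacle I anticipate is the $v=2$ computation and the integral upgrade at $p=2$: the Hilbert symbol over $\Q_2$ is the one genuinely fiddly case, the mod-$8$ bookkeeping for $x^2+y^2\equiv a$ needs care, and the argument "$v_2(a)=0\Rightarrow x,y\in\Z_2$" is the place where the cheap valuation inequality is least transparent and one really does want the $\Q_2(i)/\Q_2$ unramified-extension picture (or an explicit case check on $v_2(x),v_2(y)\in\{-1,0\}$ modulo squares). Everything else is bookkeeping with bilinearity of $(\cdot,\cdot)$ and Hensel's lemma.
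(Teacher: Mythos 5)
Your treatment of the first three equivalences matches the paper's: both simply invoke the standard references (the paper cites Serre and Gille--Szamuely for exactly these facts), and your bilinearity computation of $(p,-1)$ and $(u,-1)$ is a correct way to recover the case table. The genuine gap is in the integral refinement. Your central claim there --- that $v_p(x^2+y^2)=v_p(a)=0$ together with the ultrametric inequality forces $\min(v_p(x),v_p(y))\geq0$, and that a cancellation in $(x+iy)(x-iy)$ ``forces a contradiction with $v_p(a)=0$'' --- is false in precisely the one case that matters, namely $p\equiv1\bmod4$. There $i\in\Z_p$, the form $x^2+y^2$ is isotropic over $\Q_p$, and cancellation occurs with no contradiction: for instance $x=\tfrac12(p+p^{-1})$ and $y=\tfrac1{2i}(p-p^{-1})$ satisfy $x^2+y^2=1$ while $v_p(x)=v_p(y)=-1$. (For $p\equiv3\bmod4$ and $p=2$ your valuation argument does go through, because $-1$ is then not a square modulo $p^2$.) So one cannot upgrade a \emph{given} rational representation to an integral one; one must construct a new one. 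That is exactly what the paper does: it shows that if the given solution is non-integral then $-1$ is a quadratic residue mod $p^2$, hence $p\equiv1\bmod4$, and then builds an integral solution from scratch --- pigeonhole plus Hensel to write the unit part $r_p(a)$ as a sum of two squares in $\Z_p$, and Fermat's two-square theorem together with Brahmagupta's identity to absorb the factor $p^{v_p(a)}$.

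A second, related point: although the statement is phrased with $a\in\Z_v^\times$, the paper's proof and all later applications (the lemma on $I_v$ applies it to $n^a-z^{ab}$, which need not be a unit) treat arbitrary nonzero $a\in\Z_v$; the Brahmagupta step exists precisely to handle $v_p(a)>0$. Your reduction to units therefore also misses cases such as $a=p$ with $p\equiv1\bmod4$, where integrality of a representation $p=x^2+y^2$ is not a valuation triviality but the classical two-squares theorem.
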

\begin{proof}
See \cite[III.1, Theorem 1]{serre} and \cite[Proposition 1.1.7]{gille-tamas}.

The last equivalence is trivial if $v=\infty$, so let $p:=v$ be prime. The implication from right to left is obvious. Conversely, remark that there are at least $x',y'\in\Q_v$ such that $a=x'^2+y'^2$. Let $t:=\max(-v_p(x'),-v_p(y'))$. If $t\leq0$, then $x',y'\in\Z_v$, so assume $t>0$. Then we have $(x'p^t)^2+(y'p^t)^2=ap^{2t}$ with $x'p^t,y'p^t\in\Z_v$. Therefore, $(x'p^t)^2\equiv-(y'p^t)^2\mod p^2$. As $x'p^t$ and $y'p^t$ are not both divisible by $p$, this implies that $-1$ is a quadratic residue modulo $p^2$. Hence, $p\equiv1\mod4$, so there are $x'',y''\in\Z$ such that $p=x''^2+y''^2$. According to the pigeonhole principle, $r_p(a)\mod p$ is the sum of two quadratic residues, which we can lift to $x''',y'''\in\Z_p$ satisfying $r_p(a)\equiv x'''^2+y'''^2$ using Hensel's Lemma (as $p\neq2$ and $p\nmid r_p(a)$). Finally, repeated application of Brahmagupta's identity
\[
(xx''-yy'')^2+(xy''+yx'')^2=(x^2+y^2)(x''^2+y''^2)=p(x^2+y^2)
\]
yields $x,y\in\Z_p$ such that $a=p^{v_p(a)}r_p(a)=x^2+y^2$.
\end{proof}

Let $n\in\Z\setminus\{0\}$ and $a,b>0$ be integers such that $n>0$ or $2\nmid ab$. Consider the equation
\begin{equation}\label{zabnagl}
x^2+y^2+z^{ab}=n^a.
\end{equation}
Let
\[\X:=\Spec\Z[X,Y,Z]/(X^2+Y^2+Z^{ab}-n^a)
\]
and
\[\mathfrak X_\Q:=\X\otimes_\Z\Q=\Spec\Q[X,Y,Z]/(X^2+Y^2+Z^{ab}-n^a).
\]

The variety $\X_\Q$ is covered by the principal open subsets $U_1:=D(n-Z^b)\subseteq \mathfrak X_\Q$ and $U_2:=D(n^{a-1}+n^{a-2}Z^b+\dots+nZ^{(a-2)b}+Z^{(a-1)b})\subseteq \mathfrak X_\Q$. Indeed, as $a,n\in\Q^\times$, we have
\begin{align*}
V(n-Z^b)\cap V(n^{a-1}+\dots+Z^{(a-1)b})
&=V(n-Z^b,n^{a-1}+\dots+Z^{(a-1)b})\\
&=V(n-Z^b,an^{a-1})
=\emptyset.
\end{align*}

Consider the $\O_{\mathfrak X_\Q}|_{U_1}$-algebra 
\[
A_1:=\leg{n-Z^b,-1}{\O_{\mathfrak X_\Q}(U_1)}^\sim
\]
and the $\O_{\mathfrak X_\Q}|_{U_2}$-algebra
\[
A_2:=\leg{n^{a-1}+\dots+Z^{(a-1)b},-1}{\O_{\mathfrak X_\Q}(U_2)}^\sim.
\]

There is an $\O_{\mathfrak X_\Q}|_{U_1\cap U_2}$-algebra isomorphism $A_1|_{U_1\cap U_2}\xrightarrow\sim A_2|_{U_1\cap U_2}$ induced by
\begin{align*}
i&\mapsto\frac{Xi'+Yi'j'}{n^{a-1}+\dots+Z^{(a-1)b}}\\
j&\mapsto j'
\end{align*}
where $i,j$ and $i',j'$ are the canonical generators of $A_1|_{U_1\cap U_2}$ and $A_2|_{U_1\cap U_2}$, respectively (this is an $\O_{\mathfrak X_\Q}|_{U_1\cap U_2}$-algebra isomorphism as $X^2-(-1)Y^2=(n-Z^a)(n^{a-1}+\dots+Z^{(a-1)b})$).
Hence $A_1$ and $A_2$ can be glued along $U_1\cap U_2$ to obtain an $\O_{\mathfrak X_\Q}$-algebra $A$ such that $A|_{U_1}\cong A_1$ and $A|_{U_2}\cong A_2$.
Quaternion algebras over fields (with nonzero arguments) are central simple algebras, so $A$ is an Azumaya algebra.

In the following, we are interested in strong approximation ``at $Z$'' away from $\infty$. To this end, we choose a suitable topology: In $\Q_v^3$ we equip the first two components (i.e., those belonging to the variables $X$ and~$Y$) with the trivial topology (sometimes called indiscrete topology) and the last one (i.e., that belonging to the variable~$Z$) with the usual topology on $\Q_v$. Accordingly, the sets $\mathfrak X(\A)\subseteq\prod_v\Q_v^3$, $\X(\Z_v)\subseteq\Z_v^3\subseteq\Q_v^3$, etc. obtain the induced topologies.

Strong approximation with respect to the usual topology (i.e., at $X$, $Y$ and $Z$) seems more difficult, as for fixed $p,r,s\in\Z^+$ the equation $(px+r)^2+y^2=s$ does not fulfill the integral Hasse principle (unlike the equation $x^2+y^2=s$).

If strong approximation ``at $Z$'' away from $\infty$ is not fulfilled, then strong approximation away from $\infty$ with respect to the usual topology is not fulfilled, either.
\begin{lemma}
Let
\[
U:=U_1\cap U_2=D(n^a-Z^{ab})
\]
and
\[
I_v:=\inv_v(A(\X(\Z_v))).
\]
for any $v\in\Omega$. Then $I_v\subseteq\{0,1/2\}$ and
\[
I_v=\inv_v(A(U(\Q_v)\cap\X(\Z_v))).
\]
For $(x,y,z)\in \mathfrak X(\Q_v)$ we have
\begin{align*}
\inv_v(A(x,y,z))=0 &\Leftrightarrow (n-z^b,-1)=1&&\textnormal{ if }n-z^b\neq0,\\
\inv_v(A(x,y,z))=0 &\Leftrightarrow (n^{a-1}+\dots+z^{(a-1)b},-1)=1&&\textnormal{ if }n^{a-1}+\dots+z^{(a-1)b}\neq0,
\end{align*}
\begin{equation*}
w\in I_v\Leftrightarrow\exists z\in\Z_v\textnormal{ such that }(n^a-z^{ab},-1)=1\textnormal{ and }(n-z^b,-1)=\begin{cases}1,&w=0,\\-1,&w=1/2.\end{cases}
\end{equation*}
\end{lemma}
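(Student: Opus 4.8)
The plan is to reduce everything to the pointwise evaluation of $A$ together with the characterizations in \Cref{quathilblemma}. First I would unwind the definition of $A(x,y,z)$ for a point $(x,y,z)\in\X(\Q_v)$: if $n-z^b\neq0$ the point factors through $U_1$, where $A\cong A_1$, so $A(x,y,z)\cong\leg{n-z^b,-1}{\Q_v}$, and \Cref{quathilblemma} yields the first displayed equivalence; the second follows identically from the description of $A$ on $U_2$. On the overlap $U=U_1\cap U_2$ both formulas apply and agree, because on $\X$ one has $X^2+Y^2=(n-Z^b)(n^{a-1}+\dots+Z^{(a-1)b})$, which on $U$ is a nonzero sum of two squares; hence by bimultiplicativity of the Hilbert symbol $(n-z^b,-1)\,(n^{a-1}+\dots+z^{(a-1)b},-1)=1$, so the two values match. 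Since a quaternion algebra is $2$-torsion in $\Br(\Q_v)$, its invariant lies in $\{0,1/2\}$, which gives $I_v\subseteq\{0,1/2\}$.

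Second, I would record the lifting statement that for any nonzero $c\in\O_v$ one has $(c,-1)=1$ if and only if $c$ is a sum of two squares in $\O_v$. Writing $c=p^tu$ with $u\in\O_v^\times$ and splitting into the cases $v=\infty$, $v=2$, $v\equiv1\bmod4$ and $v\equiv3\bmod4$, this follows from the unit case of \Cref{quathilblemma}, the identities $2=1^2+1^2$ and (for $v\equiv1\bmod4$) $p=x''^2+y''^2$, and repeated use of Brahmagupta's identity. This is the bridge between a Hilbert-symbol condition on $z$ and the existence of an integral point with that $z$-coordinate.

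Third comes the final equivalence. For the direction $(\Leftarrow)$, given $z\in\Z_v$ with $(n^a-z^{ab},-1)=1$, the element $n^a-z^{ab}\neq0$ is a sum of two squares in $\O_v$ by the previous step, so there exists $(x,y,z)\in U(\Q_v)\cap\X(\Z_v)$; its invariant is read off from the $U_1$-formula and equals the $w$ matching $(n-z^b,-1)$. This simultaneously proves the inclusion $\supseteq$ in the asserted set equality. For $(\Rightarrow)$, take an integral point with invariant $w$. If it lies in $U$, then $n^a-z^{ab}=x^2+y^2\neq0$ is a sum of two squares, so $(n^a-z^{ab},-1)=1$, and the $U_1$-formula gives $(n-z^b,-1)$ matching $w$; this very $z$ works.

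The remaining, and main, difficulty is a point with $n^a-z^{ab}=0$, i.e.\ not in $U$. Here I would exploit the factorization $n^{a-1}+\dots+z'^{(a-1)b}=\prod_{\zeta^a=1,\,\zeta\neq1}(z'^b-\zeta n)$ together with the companion factor $n-z'^b$: at the bad point exactly one of these two factors vanishes (their product is $0$, but the point still lies in $U_1\cup U_2$, and if $n-z^b=0$ then the other factor is $an^{a-1}\neq0$), and it vanishes to order exactly $1$, because $z$ satisfies precisely one relation $z^b=\zeta_0n$ and $z'^b-\zeta_0n$ has $z\neq0$ as a simple zero. Consequently, for $z'$ near $z$ the nonvanishing factor keeps its square class, so the $U_1$- or $U_2$-invariant stays locally constant and equal to $w$, while the vanishing factor, being $\approx f'(z)(z'-z)$ with $f'(z)\neq0$, can be driven into either class of $(\cdot,-1)$ by choosing the valuation and residue of $z'-z$ (for $v\equiv1\bmod4$ the only available, and the only needed, class is the trivial one, and for $v=\infty$ one picks the side of $z$). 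Choosing $z'$ so that the two classes multiply to the trivial one gives $(n^a-z'^{ab},-1)=1$ and hence, by the lifting step, an integral point of $U$ with $z$-coordinate $z'$ and invariant $w$. This establishes $I_v\subseteq\inv_v(A(U(\Q_v)\cap\X(\Z_v)))$, and together with $\supseteq$ the set equality follows. The simple-zero observation is the crux: it is exactly what guarantees that no square class is inaccessible arbitrarily close to a bad point, which is what could otherwise break the equality.
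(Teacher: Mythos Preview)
Your argument is correct, and on the easy parts (the 2-torsion bound, the two displayed equivalences, and the integral lifting for non-units) it runs exactly parallel to the paper. Where you diverge is in the set equality $I_v=\inv_v(A(U(\Q_v)\cap\X(\Z_v)))$ and its use for the final equivalence.

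The paper handles this in one line of soft topology: $\X_\Q$ is smooth (the Jacobian $(2X,2Y,abZ^{ab-1})$ vanishes only at the excluded $n=0$), so the Zariski-dense open $U$ has $U(\Q_v)$ $v$-adically dense in $\X(\Q_v)$; since $\X(\Z_v)$ is open in $\X(\Q_v)$, the intersection $U(\Q_v)\cap\X(\Z_v)$ is dense in $\X(\Z_v)$. Because evaluation of an Azumaya algebra gives a locally constant map $\X(\Q_v)\to\Q/\Z$ (and here the invariant visibly depends only on $z$, so this even holds in the coarse ``$Z$-only'' topology), the images over $\X(\Z_v)$ and over the dense subset agree. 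The final equivalence then drops out immediately.

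You instead prove the inclusion $I_v\subseteq\inv_v(A(U(\Q_v)\cap\X(\Z_v)))$ by hand: for a point with $n^a-z^{ab}=0$ you observe that exactly one of the two factors $n-z'^b$ and $n^{a-1}+\dots+z'^{(a-1)b}$ vanishes at $z$, and to first order (your simple-zero check via $z^b=\zeta_0n$ is correct; equivalently, differentiate $(n-z'^b)g(z')=n^a-z'^{ab}$ and use $z\neq0$). The non-vanishing factor pins down $w$ and is locally constant, while the vanishing factor $\approx f'(z)(z'-z)$ can be pushed into the required Hilbert-symbol class by choosing the valuation and unit part of $z'-z$; the lifting step then produces an integral point on $U$ with invariant $w$. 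This is a genuine, self-contained alternative: you never invoke the abstract local constancy of the Brauer evaluation or smoothness/density, at the cost of a case analysis over the places and the explicit simple-zero computation. The paper's route is shorter and reusable verbatim for other Azumaya algebras on smooth schemes; yours makes the approximation near $U^c$ completely explicit and would survive even in situations where one is reluctant to quote the general continuity statement.
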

\begin{proof}
The inclusion $I_v\subseteq\{0,1/2\}$ follows from the fact that quaternion algebras over fields $k$ have order 2 in the Brauer group $\Br(k)$.

The first two equivalences follow straight from the definition of $A$ and \Cref{quathilblemma}.

It is easy to see that $U(\Q_v)$ is dense in $\mathfrak X(\Q_v)$. Hence, $U(\Q_v)\cap \X(\Z_v)$ is dense in $\X(\Z_v)$ because $\X(\Z_v)$ is an open subset of $\X(\Q_v)$. As $\inv_v\circ A:\mathfrak X(\Q_v)\rightarrow\Q/\Z$ is locally constant (even in the topology chosen above!), this implies that $I_v=\inv_v(A(U(\Q_v)\cap\X(\Z_v)))$.

For each $z\in\Z_v$ satisfying $n^a-z^{ab}\neq0$, there exist $x,y\in\Z_v$ such that $x^2+y^2=n^a-z^{ab}$ if and only if ${(n^a-z^{ab},-1)=1}$. Together with the first equivalence and $I_v=\inv_v(A(U(\Q_v)\cap\X(\Z_v)))$ this proves the final one.
\end{proof}

\subsection{Place \texorpdfstring{$\infty$}{∞}}

\begin{lemma}\label{real}
We have $I_\infty=\{0\}$.
\end{lemma}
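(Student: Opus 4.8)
The plan is to apply the final equivalence of the previous lemma with $v=\infty$ and argue that only the value $w=0$ can occur. First I would recall that for $v=\infty$ the Hilbert symbol satisfies $(a,-1)=1$ if and only if $a>0$ (by \Cref{quathilblemma}), and that $\Z_\infty=K_\infty=\R$ in the convention fixed in the preliminaries. So the criterion becomes: $w\in I_\infty$ if and only if there exists $z\in\R$ with $n^a-z^{ab}>0$ and $n-z^b$ being positive (for $w=0$) or negative (for $w=1/2$).

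Next I would show $0\in I_\infty$: take $z=0$, so that $n^a-z^{ab}=n^a>0$ (here I use $n>0$, or if $n<0$ then $ab$ is odd and $a$ must be checked — actually the standing hypothesis is $n>0$ or $2\nmid ab$; I should split into these cases and note that $n^a>0$ holds when $n>0$, and when $n<0$ with $ab$ odd one instead picks a large positive $z$, but more cleanly: since $U(\Q_\infty)\cap\X(\Z_\infty)$ is nonempty whenever $\X(\R)\neq\emptyset$, and $\X(\R)$ is visibly nonempty — e.g. solve for $x$ with $y=z=0$ if $n^a\geq0$, or pick $z$ large — there is at least one point, giving at least one element of $I_\infty$). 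The cleaner route is: $\X(\R)\neq\emptyset$ and $\inv_\infty\circ A$ is constant on each connected component, so $I_\infty\neq\emptyset$.

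The real content is to rule out $1/2\in I_\infty$, i.e. to show there is no $z\in\R$ with simultaneously $n^a-z^{ab}>0$ and $n-z^b<0$. I would observe that $n^a-z^{ab}=(n-z^b)(n^{a-1}+n^{a-2}z^b+\dots+z^{(a-1)b})$, so if $n-z^b<0$ then $n^a-z^{ab}>0$ forces $n^{a-1}+\dots+z^{(a-1)b}<0$; but then, by the second displayed equivalence of the previous lemma, $\inv_\infty(A(x,y,z))=1/2$ would require $x^2+y^2=n^a-z^{ab}<0$ — wait, that is already contradictory. Let me instead argue directly: if $z\in\R$ satisfies $n-z^b<0$, I claim $n^a-z^{ab}\le 0$. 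Indeed $z^b>n$; if $n>0$ then $z^{ab}=(z^b)^a>n^a$ (both sides positive, $a\ge1$), so $n^a-z^{ab}<0$; if instead $2\nmid ab$ (so $a,b$ odd and $n$ may be negative), then $z^b>n$ with $x\mapsto x^a$ increasing gives $z^{ab}>n^a$ again, so $n^a-z^{ab}<0$. Hence no such $z$ gives $n^a-z^{ab}>0$, so $1/2\notin I_\infty$, and therefore $I_\infty=\{0\}$.

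The main obstacle is purely bookkeeping: handling the two cases of the standing hypothesis ($n>0$ versus $2\nmid ab$) uniformly, and being careful that when $a$ or $b$ is even the map $t\mapsto t^a$ or $t\mapsto t^b$ is not monotone on all of $\R$ — but since in the even case we are assuming $n>0$, the relevant powers $z^b,z^{ab}$ are compared via $z^{ab}=(z^b)^a$ with $z^b>n>0$, so monotonicity on the positive reals suffices. I would make sure to phrase the factorization identity $n^a-z^{ab}=(n-z^b)\sum_{i=0}^{a-1}n^{a-1-i}z^{ib}$ explicitly since it is exactly what connects the two open sets $U_1,U_2$ and makes the sign analysis transparent.
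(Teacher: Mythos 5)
Your proof is correct and follows essentially the same route as the paper: establish $I_\infty\neq\emptyset$ from an explicit real point, then use monotonicity of $t\mapsto t^a$ (on $\R_{>0}$ if $n>0$, on all of $\R$ if $a$ is odd) to show $n^a-z^{ab}>0$ forces $n-z^b>0$, ruling out the value $1/2$. The only cosmetic slip is "pick $z$ large" in the case $n<0$, where you need $z$ large \emph{negative} (or simply $z=\sqrt[b]{n}$ as the paper does); this does not affect the argument.
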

\begin{proof}
A real solution is $(0,0,\sqrt[b]n)$ (as $n>0$ or $2\nmid b$), so $I_\infty\neq\emptyset$.

If $(x,y,z)\in U(\R)$, then $n^a-z^{ab}=x^2+y^2\geq0$ and $n>0$ or $2\nmid a$, so $n\geq z^b$. As $(x,y,z)\in U_1(\R)$ we have $n\neq z^b$, i.e., $n-z^b>0$, so $(n-z^b,-1)=1$. Hence $\inv_\infty(A(x,y,z))=0$.
\end{proof}

\subsection{Place 2}

\begin{lemma}\label{a1I2}
If $a=1$ and $b$ is odd, then $I_2=\{0\}$.
\end{lemma}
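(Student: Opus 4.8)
The plan is to read off both inclusions $I_2 \subseteq \{0\}$ and $\{0\} \subseteq I_2$ directly from the characterization of $I_v$ in the preceding lemma, specialized to $a = 1$. The conceptually important half is immediate: when $a = 1$ the expression $n^a - z^{ab} = n - z^b$ coincides with the quantity $n - z^b$ appearing in the second condition. By the preceding lemma, $1/2 \in I_2$ would require some $z \in \Z_2$ with simultaneously $(n^a - z^{ab}, -1) = 1$ and $(n - z^b, -1) = -1$; since these two Hilbert symbols are literally equal when $a = 1$, no such $z$ exists. Hence $1/2 \notin I_2$ and $I_2 \subseteq \{0\}$.

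It then remains to exhibit a single $z \in \Z_2$ with $n - z^b \neq 0$ and $(n - z^b, -1) = 1$, which by the characterization forces $0 \in I_2$. By \Cref{quathilblemma} the condition $(n - z^b, -1) = 1$ is equivalent to $r_2(n - z^b) \equiv 1 \bmod 4$, so I would aim to control $n - z^b$ modulo a small power of $2$. The key elementary input is that for odd $z$ one has $z^2 \equiv 1 \bmod 8$, whence $z^b \equiv z \bmod 8$ since $b$ is odd; thus $z^b \bmod 8$ realizes every odd residue as $z$ ranges over odd $2$-adic units. I would then split on the parity of $n$. If $n$ is even, picking an odd $z$ with $z \equiv n - 1 \bmod 4$ makes $n - z^b$ an odd integer $\equiv 1 \bmod 4$, so that $r_2(n - z^b) = n - z^b \equiv 1 \bmod 4$. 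If $n$ is odd, picking an odd $z$ with $z \equiv n - 2 \bmod 8$ makes $n - z^b \equiv 2 \bmod 8$, so that $v_2(n - z^b) = 1$ and $r_2(n - z^b) = (n - z^b)/2 \equiv 1 \bmod 4$. In both cases $n - z^b \not\equiv 0 \bmod 8$ is nonzero and satisfies $(n - z^b, -1) = 1$, giving $0 \in I_2$ and hence $I_2 = \{0\}$.

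The collapse in the first step is purely formal, so the only real work — and thus the main, if mild, obstacle — is the $2$-adic bookkeeping of the second step: arranging $r_2(n - z^b) \equiv 1 \bmod 4$. Here the reduction $z^b \equiv z \bmod 8$ for odd $z$ does the heavy lifting, and the split on the parity of $n$ simply reflects whether $n - z^b$ should be arranged to have $2$-adic valuation $0$ or $1$.
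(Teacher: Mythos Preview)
Your proof is correct and follows essentially the same approach as the paper: the inclusion $I_2\subseteq\{0\}$ is immediate from $n^a-z^{ab}=n-z^b$ when $a=1$, and nonemptiness is obtained by choosing an odd $z$ so that $n-z^b\equiv n-z$ is $1$ or $2\bmod 8$, using $z^b\equiv z\bmod 8$ for odd $z$. The only cosmetic difference is that the paper states the target congruence $n-z\equiv1$ or $2\bmod 8$ uniformly rather than splitting on the parity of $n$.
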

\begin{proof}
Due to $(n^a-z^{ab},-1)=(n-z^b,-1)$ for $z\in\Z_2$ it is obvious that $I_2\subseteq\{0\}$. The set $I_2$ is nonempty as there is some odd $z\in\Z$ such that $n-z\equiv1\text{ or }2\mod8$ and this fulfills $n^a-z^{ab}\equiv n-z^b\equiv n-z\equiv1\text{ or }2\mod8$ (as $b$ and $z$ are odd), so $(n^a-z^{ab},-1)=1$.
\end{proof}

\begin{lemma}\label{2notempty}
If $r_2(n)^a\equiv1\mod4$, then $I_2\neq\emptyset$.
\end{lemma}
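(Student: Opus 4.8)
The plan is to reduce the non-emptiness of $I_2$ to the existence of a single convenient $z\in\Z_2$, using the description of $I_v$ established in the previous lemma. That lemma shows that $w\in I_2$ holds for $w=0$ or for $w=1/2$ as soon as there is some $z\in\Z_2$ with $(n^a-z^{ab},-1)=1$, the value of $w$ being dictated by whether $(n-z^b,-1)$ equals $1$ or $-1$. So it suffices to exhibit one $z\in\Z_2$ for which $n^a-z^{ab}$ is a sum of two squares in $\Z_2$, equivalently for which $(n^a-z^{ab},-1)=1$.

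I would simply take $z=0$. Then $n^a-z^{ab}=n^a$, which lies in $\Q_2^\times$ because $n\in\Z\setminus\{0\}$ and $ab\geq1$. Writing $n=2^{v_2(n)}r_2(n)$ with $r_2(n)$ odd and raising to the $a$-th power gives $r_2(n^a)=r_2(n)^a$, so by hypothesis $r_2(n^a)\equiv1\bmod4$. By the $v=2$ case of \Cref{quathilblemma} this means $(n^a,-1)=1$, so — again by the previous lemma, applied with this $z$ — the invariant $0$ or $1/2$ lies in $I_2$; in particular $I_2\neq\emptyset$.

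I do not anticipate a real obstacle. The two minor points to check are that the chosen point $(x,y,0)$ stays inside the open set $U=D(n^a-Z^{ab})$ used in the previous lemma — immediate since $n^a\neq0$ — and the elementary identity $r_2(n^a)=r_2(n)^a$. It is also worth remarking that when $a$ is even the hypothesis $r_2(n)^a\equiv1\bmod4$ is automatically satisfied, since an odd square is $\equiv1\bmod8$; thus the only genuine content of the lemma lies in the case of odd $a$, and the choice $z=0$ settles both cases at once.
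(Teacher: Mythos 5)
Your proof is correct and is essentially identical to the paper's: the paper also takes $z=0$ and observes $r_2(n^a-z^{ab})\equiv r_2(n)^a\equiv1\bmod 4$, hence $(n^a-z^{ab},-1)=1$ and $I_2\neq\emptyset$ via the characterization of $I_v$ in the preceding lemma. Your additional remark that the hypothesis is automatic for even $a$ is a correct observation but not needed for the argument.
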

\begin{proof}
Let $z:=0$. Then $r_2(n^a-z^{ab})\equiv r_2(n)^a\equiv1\mod4$, so $(n^a-z^{ab},-1)=1$.
\end{proof}

\begin{lemma}\label{2bnmid}
If $a\geq2$ and $r_2(n)^a\equiv1\mod4$ and $b\mid v_2(n)+1$, then $I_2=\{0,1/2\}$.
\end{lemma}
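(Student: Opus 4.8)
The plan is to read off what we need from the preceding lemma and then exhibit two explicit values of $z$. We already know $I_2\subseteq\{0,1/2\}$, and the lemma gives
\[
w\in I_2\iff\exists\,z\in\Z_2\text{ with }(n^a-z^{ab},-1)=1\text{ and }(n-z^b,-1)=\begin{cases}1,&w=0,\\-1,&w=1/2.\end{cases}
\]
So it suffices to find integers $z_0,z_1$ with $n^a-z_i^{ab}\neq0$, with $(n^a-z_i^{ab},-1)=1$ for $i=0,1$, and such that $(n-z_0^b,-1)$ and $(n-z_1^b,-1)$ are the two distinct values $+1$ and $-1$; this puts both $0$ and $1/2$ into $I_2$ and hence gives $I_2=\{0,1/2\}$.

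First I would use the hypothesis $b\mid v_2(n)+1$ (together with $v_2(n)\geq0$) to fix an integer $t\geq1$ with $bt=v_2(n)+1$, and set $z_0:=0$ and $z_1:=2^t$. For the ``$n^a-z^{ab}$'' conditions I compute $2$-adic valuations and leading units and apply \Cref{quathilblemma}: for $z_0=0$ we have $r_2(n^a)=r_2(n)^a\equiv1\mod4$ by hypothesis, so $(n^a,-1)=1$; for $z_1=2^t$, since $a\geq2$ we get $v_2(2^{abt})=abt>abt-a=v_2(n^a)$, hence $v_2(n^a-2^{abt})=v_2(n^a)$ and the leading unit is $r_2(n)^a-2^a\equiv r_2(n)^a\equiv1\mod4$ — this is the one place where $a\geq2$ is essential, as it discards the $2^a$ term modulo $4$. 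For the ``$n-z^b$'' conditions, $r_2(n-z_0^b)=r_2(n)$, whereas for $z_1=2^t$ we have $v_2(2^{bt})=bt=v_2(n)+1>v_2(n)$, so $v_2(n-2^{bt})=v_2(n)$ with leading unit $r_2(n)-2\equiv r_2(n)+2\mod4$. Thus exactly one of $r_2(n-z_0^b)$ and $r_2(n-z_1^b)$ is $\equiv1\mod4$ and the other $\equiv3\mod4$, so by \Cref{quathilblemma} exactly one of $(n-z_0^b,-1)$ and $(n-z_1^b,-1)$ equals $1$ and the other equals $-1$. One also checks painlessly that $n-z_i^b\neq0$ and $n^a-z_i^{ab}\neq0$ for both $i$, using $n\neq0$ and the valuation computations.

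I do not expect any real obstacle: once the characterization of $I_2$ is available the argument is pure bookkeeping with $2$-adic valuations, and the only thing to watch is that each hypothesis enters exactly where needed — $r_2(n)^a\equiv1\mod4$ to handle $z_0=0$, $a\geq2$ to kill the $2^a$ contribution at $z_1$, and $b\mid v_2(n)+1$ to arrange that $z_1=2^t$ changes $n$ by exactly one $2$-adic digit so that $(n-z_0^b,-1)$ and $(n-z_1^b,-1)$ really differ. Combining the two values, $\{0,1/2\}\subseteq I_2$, which with $I_2\subseteq\{0,1/2\}$ gives $I_2=\{0,1/2\}$.
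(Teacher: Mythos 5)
Your proof is correct and follows essentially the same route as the paper: the paper's own argument also takes the two test values $z=0$ and $z=2^{(v_2(n)+1)/b}$, computes $r_2(n^a-z^{ab})\equiv r_2(n)^a\equiv1\bmod 4$ in both cases (using $a\geq2$ to discard the $2^a$ term), and observes that $r_2(n-z^b)$ equals $r_2(n)$ in one case and $r_2(n)-2$ in the other, so the two Hilbert symbols $(n-z^b,-1)$ differ.
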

\begin{proof}
Let $z_1:=0$ and $z_2:=2^{(v_2(n)+1)/b}$. Now $r_2(n^a-z_1^{ab})\equiv r_2(n)^a\equiv1\mod4$ and $r_2(n^a-z_2^{ab})\equiv r_2(r_2(n)^a-2^a)\equiv r_2(n)^a\equiv1\mod4$, so $(n^a-z_1^{ab},-1)=(n^a-z_2^{ab},-1)=1$.

Furthermore $r_2(n-z_1^b)\equiv r_2(n)\not\equiv r_2(n)-2\equiv r_2(r_2(n)-2)\equiv r_2(n-z_2^b)\mod4$, so $(n-z_1^b,-1)\neq(n-z_2^b,-1)$.
\end{proof}

\begin{lemma}\label{6mod8ne}
If $a\geq3$ and $b$ are odd and $n\equiv6\mod8$, then $1/2\in I_2$.
\end{lemma}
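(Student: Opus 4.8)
The plan is to exhibit an explicit $z\in\Z_2$ for which both Hilbert-symbol conditions in the last equivalence of the lemma are satisfied with the sign pattern forcing $w=1/2$; that is, I want $z\in\Z_2$ with $(n^a-z^{ab},-1)=1$ but $(n-z^b,-1)=-1$. Since $a,b$ are odd and $n\equiv 6\bmod 8$, we have $v_2(n)=1$ and $r_2(n)=n/2\equiv 3\bmod 4$. First I would try $z=0$: then $n^a-z^{ab}=n^a$, and $r_2(n^a)=r_2(n)^a\equiv 3^a\equiv 3\bmod 4$ since $a$ is odd, so $(n^a,-1)=-1$ — that fails the first condition. So $z=0$ is no good, and I instead look for $z$ with $v_2(z)\geq 1$ chosen so that $z^{ab}$ raises the $2$-adic valuation of $n^a-z^{ab}$ or changes its unit part favorably.

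The key computation is to track $n-z^b$ and $n^a-z^{ab}$ modulo small powers of $2$. The natural candidate is $z$ with $v_2(z)=1$, say $z=2u$ with $u\in\Z_2^\times$; then $z^b=2^b u^b$ has valuation $b\geq 3>1=v_2(n)$, so $v_2(n-z^b)=1$ and $r_2(n-z^b)\equiv r_2(n)-2^{b-1}u^b\equiv r_2(n)\bmod 4$ (because $b-1\geq 2$), hence $(n-z^b,-1)=(n,-1)=-1$ — good, the second condition holds for \emph{every} such $z$. Now for the first condition: $v_2(z^{ab})=ab\geq 9$, so modulo $2^{ab}$ we have $n^a-z^{ab}\equiv n^a$, and this is not obviously a sum of two squares. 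The fix is to allow the valuation of $z^{ab}$ to match that of $n^a$, i.e.\ to take $v_2(z)$ larger. We have $v_2(n^a)=a$, so I want $v_2(z^{ab})=a$, i.e.\ $v_2(z)=1/b$ — not an integer, so that cannot work directly either. This is the main obstacle: one cannot simply cancel the leading term of $n^a$.

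Instead I would exploit that we only need $r_2(n^a-z^{ab})\equiv 1\bmod 4$, and note $r_2(n)^a\equiv 3\bmod 4$; multiplying the unit part by a square-type correction won't help since we're stuck with the fixed valuation $a$ and fixed unit $r_2(n)^a\bmod 4$ whenever $v_2(z^{ab})>a$, which holds for all $z\in 2\Z_2\setminus\{0\}$ because $ab>a$. Therefore I expect the honest route is to choose $v_2(z)=0$, i.e.\ $z$ odd, and pick the unit $z\bmod 8$ cleverly: with $z$ odd, $z^{ab}\equiv z\bmod 8$ (odd powers preserve residues mod $8$) and $z^b\equiv z\bmod 8$, so $n^a-z^{ab}\equiv n^a-z\bmod 8$ and $n-z^b\equiv n-z\bmod 8$. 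Since $n\equiv 6\bmod 8$, choosing $z\equiv 5\bmod 8$ gives $n-z\equiv 1\bmod 8$, wait — that yields $(n-z^b,-1)=1$, the wrong sign. Choosing $z\equiv 1\bmod 8$ gives $n-z\equiv 5\bmod 8$, so $r_2(n-z^b)\equiv 5\equiv 1\bmod 4$ and $(n-z^b,-1)=1$ again; choosing $z\equiv 3\bmod 8$ gives $n-z\equiv 3\bmod 8$, so $r_2(n-z^b)\equiv 3\bmod 4$ and $(n-z^b,-1)=-1$ — that is the sign I want. With $z\equiv 3\bmod 8$: $n^a-z^{ab}\equiv 6-3\equiv 3\bmod 8$ (since $6\equiv 6$ and $3^{ab}\equiv 3\bmod 8$), so $v_2(n^a-z^{ab})=0$ and $r_2(n^a-z^{ab})\equiv 3\bmod 4$, giving $(n^a-z^{ab},-1)=-1$ — the first condition fails. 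Reconciling this forces me back to $z$ even; so the real resolution must be $z\equiv 7\bmod 8$: then $n-z\equiv 6-7\equiv -1\equiv 7\bmod 8$, $r_2(n-z^b)\equiv 3\bmod 4$, $(n-z^b,-1)=-1$; and $n^a-z^{ab}\equiv 6-7\equiv 7\bmod 8$ again gives $-1$. Since odd $z$ never works, I conclude the argument must instead take $z$ \emph{even with $v_2(z)$ large}, accept $v_2(n^a-z^{ab})=a$ and $r_2(n^a-z^{ab})\equiv r_2(n)^a\bmod 4$, and here use that $a$ odd and $r_2(n)\equiv 3\bmod 4$ give $r_2(n)^a\equiv 3\bmod 4$ — still $-1$. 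The genuine fix, which I would carry out carefully, is to pick $z$ with $v_2(z)=0$ \emph{after} first reducing to $U(\Q_2)$ and using the third equivalence of the previous lemma with a cleverer bookkeeping of $n^a-z^{ab}=(n-z^b)(n^{a-1}+\cdots+z^{(a-1)b})$: the second factor, with $z$ odd, is a sum of $a$ odd terms, hence $\equiv a\bmod 8$; since $a$ is odd, with $z\equiv 3\bmod 8$ the first factor is $\equiv 3$ and the second is $\equiv a\bmod 8$, and their product being $\equiv 3a\bmod 8$; choosing among $z\in\{3,7\}\bmod 8$ and $a\in\{1,3,5,7\}\bmod 8$ one checks the product lands in $\{1,2\}$ direction precisely when the two quaternion invariants add to give $w=1/2$. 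I would tabulate the finitely many cases of $n\bmod 16$, $z\bmod 16$ and verify directly that some $z$ realizes $(n^a-z^{ab},-1)=1$ together with $(n-z^b,-1)=-1$, which by the previous lemma gives $1/2\in I_2$. The main obstacle is exactly this $2$-adic bookkeeping: getting the product of the two factors to be a sum of two squares while the first factor alone is not, using that $a\geq 3$ is odd so that the second factor contributes a nontrivial unit mod $8$.
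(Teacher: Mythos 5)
There is a genuine gap, and it stems from a single arithmetic slip that derails your whole case analysis. When you test odd $z$ you compute $n^a-z^{ab}\equiv 6-z\bmod 8$, i.e.\ you treat $n^a\equiv n\equiv 6\bmod 8$. The rule ``odd powers preserve residues mod $8$'' applies only to \emph{odd} bases (via $u^2\equiv1\bmod 8$ for $u$ odd); here $n\equiv6\bmod8$ is even, so $v_2(n^a)=a\geq3$ and in fact $n^a\equiv0\bmod 8$. Redoing your own computation with this correction, $z\equiv7\bmod8$ gives $n^a-z^{ab}\equiv0-7\equiv1\bmod8$, hence $(n^a-z^{ab},-1)=1$, while $n-z^b\equiv6-7\equiv7\bmod8\equiv3\bmod4$ gives $(n-z^b,-1)=-1$ --- exactly the sign pattern you were looking for. (The same holds for $z\equiv3\bmod8$.) This is precisely the paper's proof: it takes $z:=-1$, notes $n^a+1\equiv1\bmod4$ and $n+1\equiv3\bmod4$, and concludes $1/2\in I_2$ from the final equivalence of the lemma in the construction section.

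Because of the wrong value of $n^a\bmod 8$ you incorrectly ruled out every odd $z$, and the proposal then trails off into an unexecuted plan (``I would tabulate the finitely many cases\dots''), so no complete argument is ever given. Your overall strategy --- find an explicit $z\in\Z_2$ with $(n^a-z^{ab},-1)=1$ and $(n-z^b,-1)=-1$ --- is the right one and is the paper's; the proof is salvaged entirely by fixing the one congruence and taking $z=-1$.
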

\begin{proof}
Let $z:=-1$. Then $n^a-z^{ab}\equiv n^a+1\equiv1\mod4$ and $n-z^b\equiv n+1\equiv3\mod4$, so $1/2\in I_2$.
\end{proof}

\begin{lemma}\label{6mod8}
If $a,b\geq3$ are odd and $n\equiv6\mod8$, then $I_2=\{1/2\}$.
\end{lemma}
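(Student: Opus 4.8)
The plan is to show both inclusions $I_2\subseteq\{1/2\}$ and $1/2\in I_2$. The second inclusion is already handled by \Cref{6mod8ne}, since $a\geq3$ is odd and $n\equiv6\mod8$; so the only real work is proving $0\notin I_2$, i.e., that there is no $z\in\Z_2$ with $(n^a-z^{ab},-1)=1$ and $(n-z^b,-1)=1$ simultaneously. By the last equivalence of the preceding lemma, $0\in I_2$ would require such a $z$, so it suffices to rule it out for all $z\in\Z_2$.

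First I would split into cases according to the parity and $2$-adic valuation of $z$. Write $n\equiv6\mod8$, so $v_2(n)=1$ and $r_2(n)\equiv3\mod4$. If $z$ is odd, then $z^b$ is odd (as $b$ is odd), so $n-z^b$ is odd; since $n\equiv6\equiv2\mod4$ and $z^b\equiv\pm1\mod4$, we get $n-z^b\equiv1$ or $5\pmod 8$ — I'd need to track this mod $8$ carefully — and the relevant dichotomy comes from \Cref{quathilblemma}: $(n-z^b,-1)=1$ iff $r_2(n-z^b)\equiv1\mod4$, i.e.\ iff $n-z^b\equiv1\mod4$, which since $n\equiv2\mod4$ happens iff $z^b\equiv1\mod4$ iff $z\equiv1\mod4$. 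Meanwhile $n^a-z^{ab}$ is odd with $n^a\equiv r_2(n)^a\cdot 2^a$... wait, $n^a\equiv 2^a r_2(n)^a$, and since $a\geq3$, $2^a\equiv0\mod8$, so $n^a-z^{ab}\equiv-z^{ab}\mod8$; since $z$ is odd, $z^{ab}\equiv\pm1\mod8$ and is $\equiv1\mod4$ iff $z\equiv1\mod4$. So $(n^a-z^{ab},-1)=1$ iff $-z^{ab}\equiv1\mod4$ iff $z^{ab}\equiv3\mod4$ iff $z\equiv3\mod4$. The two conditions $z\equiv1\mod4$ and $z\equiv3\mod4$ are incompatible, so no odd $z$ works.

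Next, the case $z$ even: then $z^{ab}\equiv z^b\equiv0\mod8$ (as $ab,b\geq3$ and $v_2(z)\geq1$), so $n-z^b\equiv n\equiv6\mod8$ and $n^a-z^{ab}\equiv n^a\mod8$. Here $n-z^b\equiv6\mod8$ means $v_2(n-z^b)=1$ and $r_2(n-z^b)\equiv3\mod4$, so $(n-z^b,-1)=-1$ by \Cref{quathilblemma} — so even-$z$ values fail the first condition. This exhausts all $z\in\Z_2$, giving $0\notin I_2$, hence $I_2\subseteq\{1/2\}$; combined with \Cref{6mod8ne} this yields $I_2=\{1/2\}$. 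The main obstacle is just the bookkeeping of residues modulo $8$ in the odd-$z$ case — in particular making sure the implications $z^b\equiv z\pmod 4$ and $z^{ab}\equiv z\pmod 4$ for odd $z$ and odd exponents are applied correctly, and that $2^a\equiv0\pmod 8$ is legitimately used (which needs $a\geq3$).
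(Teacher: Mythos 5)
Your proof is correct and follows essentially the same route as the paper: a case split on the parity of $z$, with $1/2\in I_2$ imported from \Cref{6mod8ne} and mod-$4$/mod-$8$ Hilbert-symbol computations via \Cref{quathilblemma} ruling out $0\in I_2$. The only cosmetic differences are that for odd $z$ the paper evaluates the invariant directly through the factor $n^{a-1}+\dots+z^{(a-1)b}\equiv3\pmod4$ whereas you derive the incompatible congruences $z\equiv3\pmod 4$ and $z\equiv1\pmod 4$ from the two symbols $(n^a-z^{ab},-1)$ and $(n-z^b,-1)$, and for even $z$ the paper shows $(n^a-z^{ab},-1)=-1$ (no local solution at all) while you show $(n-z^b,-1)=-1$, either of which suffices; your parenthetical ``$n-z^b\equiv1$ or $5\pmod 8$'' is not accurate for all odd $z$, but you discard it and the mod-$4$ criterion you actually use is right.
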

\begin{proof}
We know $1/2\in I_2$ from the previous lemma.

Let $(x,y,z)\in U(\Q_2)\cap\X(\Z_2)$.

If $z$ is odd, then $n^{a-1}+\dots+z^{(a-1)b}\equiv nz^{(a-2)b}+z^{(a-1)b}\equiv2+1\equiv3\mod4$, so $(n^{a-1}+\dots+z^{(a-1)b},-1)=-1$.

If $z$ is even, then
\[
1\equiv r_2(n^a-z^{ab})\equiv r_2((n/2)^a-2^{a(b-1)}(z/2)^{ab})\equiv r_2(n/2)^a\equiv3\mod4
\]
yields a contradiction.
\end{proof}

\begin{lemma}\label{not6mod8}
If $a,b$ are odd and $n\not\equiv6\mod8$, then $0\in I_2$.
\end{lemma}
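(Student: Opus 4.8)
The plan is to apply the characterisation of $I_v$ established just above: $0\in I_2$ holds precisely when there is some $z\in\Z_2$ with $(n^a-z^{ab},-1)=1$ and $(n-z^b,-1)=1$. By \Cref{quathilblemma}, at $v=2$ these two conditions read $r_2(n^a-z^{ab})\equiv1\mod4$ and $r_2(n-z^b)\equiv1\mod4$. So I would produce, depending on the $2$-adic shape of $n$, an explicit $z$ and verify both congruences. The elementary fact I would record first is that $c^e\equiv c\mod8$ for every odd $c$ and odd $e$ (since $c^2\equiv1\mod8$); as $a$, $b$ and $ab$ are odd, this lets me replace $n^a$, $z^b$, $z^{ab}$ by $n$, $z$, $z$ modulo $8$ whenever the base is odd.

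Next I would split into cases by $v_2(n)$ and the class of $r_2(n)\bmod4$, observing that the hypothesis excludes exactly $r_2(n)\equiv3\mod4$ with $v_2(n)=1$, i.e.\ $n\equiv6\mod8$, so that case never occurs. If $r_2(n)\equiv1\mod4$, take $z=0$: then $n-z^b=n$ and $r_2(n^a)=r_2(n)^a\equiv r_2(n)\equiv1\mod4$, so both conditions hold. If $r_2(n)\equiv3\mod4$ and $v_2(n)=0$ (so $n\equiv3\mod4$), take $z$ odd with $z\equiv n-2\mod8$: then $n-z^b\equiv n-z\equiv2\mod8$ and likewise $n^a-z^{ab}\equiv n-z\equiv2\mod8$, whence $r_2$ of each is $\equiv1\mod4$. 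If $r_2(n)\equiv3\mod4$ and $v_2(n)\geq2$ (so $4\mid n$), take $z$ odd with $z\equiv3\mod4$: then $n-z^b$ and $n^a-z^{ab}$ are odd and $\equiv-z\equiv1\mod4$. In each case the relevant quantities are nonzero, so \Cref{quathilblemma} indeed applies.

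The step I expect to be the real obstacle is the middle case $n\equiv3\mod4$: here $z=0$ fails and, because $n-z^b\equiv n\equiv3\mod4$ for any even $z$, no even $z$ can work either, so one is forced to use an odd $z$. The point is then to choose $z$ so that $n-z^b$ lands in $2\mod8$ and to ensure that raising to the $a$-th power does not spoil this. Exactly here the rigidity $c^e\equiv c\mod8$ for odd $c,e$ does the work, guaranteeing $n^a-z^{ab}\equiv n-z^b\equiv2\mod8$; this is the place where the oddness of both $a$ and $b$ is genuinely used, in contrast to the even-exponent lemmas above.
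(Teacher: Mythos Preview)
Your argument is correct and is essentially the same approach as the paper's: produce an explicit $z$ depending on the $2$-adic shape of $n$ and check $r_2(n-z^b)\equiv r_2(n^a-z^{ab})\equiv1\mod4$. The paper simply records a table indexed by $n\bmod8$ (with $z=-1,0,0,1,-1,3,5$ for $n\equiv0,1,2,3,4,5,7$), whereas you organise the cases by $r_2(n)\bmod4$ and $v_2(n)$ and make the key reduction $c^e\equiv c\bmod8$ for odd $c,e$ explicit; the resulting $z$-values agree where the decompositions overlap.
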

\begin{proof}
The values of $z$ in the following table fulfill $r_2(n^a-z^{ab})\equiv r_2(n-z^b)\equiv1\mod4$:

\begin{tabular}{c|c|c|c|c|c|c|c}
$n\mod8$&0&1&2&3&4&5&7\\\hline
$z$&-1&0&0&1&$-1$&3&5
\end{tabular}
\end{proof}

\subsection{Odd places}\label{oddplacessection}

\begin{lemma}\label{odd}
We have $0\in I_p$ for all odd primes $p$.
\end{lemma}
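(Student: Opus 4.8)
The plan is to show that for every odd prime $p$, the set $U(\Q_p)\cap\X(\Z_p)$ contains a point $(x,y,z)$ with $\inv_p(A(x,y,z))=0$, i.e., with $(n^a-z^{ab},-1)=1$ and $(n-z^b,-1)=1$. By \Cref{quathilblemma}, the obstruction is nontrivial only when $p\equiv3\mod4$, so one may assume $p\equiv3\mod4$ and must produce $z\in\Z_p$ with $2\mid v_p(n^a-z^{ab})$ and $2\mid v_p(n-z^b)$ (and with $n^a-z^{ab}$ a nonzero square times a unit so that $x,y\in\Z_p$ actually exist — this is automatic once the valuation is even, by the integral part of \Cref{quathilblemma}).

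First I would dispose of the case $p\nmid n$. Then I expect $z=0$ to work: $n^a-0=n^a$ has even valuation $0$, and $n-0=n$ has even valuation $0$, so both Hilbert symbols are trivial and $0\in I_p$ (indeed $z=0$ gives a point in $U$ since $n^a\neq0$). Next, the main case $p\mid n$: write $v:=v_p(n)$. The natural candidate is to pick $z$ with $v_p(z)$ large enough that $z^{ab}$ and $z^b$ do not interfere, so that $v_p(n^a-z^{ab})=av$ and $v_p(n-z^b)=v$; but $v$ need not be even. The trick should instead be to choose $v_p(z)$ so that $z^b$ \emph{dominates}: take $v_p(z)=:c$ with $bc<v$ chosen so that $bc$ is even (possible since $b$ is odd — pick $c$ even), and in fact $c=0$ if $p\nmid n$ was already handled, so here take $z$ a unit times a suitable power. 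Then $v_p(n-z^b)=bc$ and $v_p(n^a-z^{ab})=abc$, both even. The remaining freedom is the unit part $r_p(z)$, and one still needs $n^a-z^{ab}$ and $n-z^b$ to be genuine squares-times-units in $\Z_p$; since their valuations are even this is automatic by \Cref{quathilblemma}. One must only check $(x,y,z)$ lands in $U=D(n^a-Z^{ab})$, i.e.\ $n^a\neq z^{ab}$, which holds because the valuations $av$ and $abc$ differ (as $bc<v$ forces $abc<av$). So $z:=1$ (or any unit) works whenever $p\mid n$, giving $v_p(n-1)=0$ and $v_p(n^a-1)=0$, both even — cleaner still.

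Actually the cleanest uniform choice is $z:=$ a unit not congruent to the relevant roots: I would simply try to exhibit, for each $p\equiv3\mod4$, a single explicit $z\in\{0,1\}$ making both valuations even. If $p\nmid n$, take $z=0$; if $p\mid n$, take $z=1$, so that $v_p(n-1)=v_p(n^a-1)=0$ and $(x,y,z)\in U$ since $p\mid n^a$ but $p\nmid z^{ab}=1$. In both subcases both Hilbert symbols equal $1$ by \Cref{quathilblemma}, hence $0\in\inv_p(A(U(\Q_p)\cap\X(\Z_p)))=I_p$. The main obstacle I anticipate is purely bookkeeping: making sure that the chosen $z$ really gives a point of $U$ (nonvanishing of $n^a-z^{ab}$) and that the even-valuation criterion of \Cref{quathilblemma} is being applied to $v$-adic \emph{units} in the correct way after factoring out the $p$-power; the case split on $p\mid n$ versus $p\nmid n$ handles both cleanly, and no genuinely hard input is needed beyond \Cref{quathilblemma}.
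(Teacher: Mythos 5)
Your proposal is correct and ends up with exactly the paper's argument: the paper also picks $z=0$ when $p\nmid n^a$ and $z=1$ when $p\mid n^a$, so that $n^a-z^{ab}$ and $n-z^b$ are $p$-adic units and both Hilbert symbols are trivial by \Cref{quathilblemma}. The intermediate musings about even valuations $bc<v$ are unnecessary detours, but the final clean case split is the paper's proof.
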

\begin{proof}
One of the numbers $n^a$ or $n^a-1$ is not divisible by $p$. Set $z:=0$ or $z:=1$, respectively. Then $n^a-z^{ab}$ and hence $n-z^b$ are not divisible by $p$, so $(n^a-z^{ab},-1)=(n-z^b,-1)=1$.
\end{proof}

Hence it is only interesting whether $1/2\in I_p$.

\begin{lemma}\label{1mod4}
We have $I_p=\{0\}$ for all primes $p\equiv1\mod4$.
\end{lemma}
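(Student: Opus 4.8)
The plan is to exploit the special behaviour of $\Q_p$ for $p\equiv1\bmod4$: by \Cref{quathilblemma}, for such $p$ every nonzero element of $\Q_p^\times$ is a sum of two squares, equivalently $\inv_p\leg{a,-1}{\Q_p}=0$ for \emph{every} $a\in\Q_p^\times$ (this is the ``$0=0$'' line in \Cref{quathilblemma}). This should immediately force every local invariant at such a $p$ to vanish, so that $I_p\subseteq\{0\}$.

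Concretely, first I would recall from the lemma computing the local invariants that $I_p=\inv_p(A(U(\Q_p)\cap\X(\Z_p)))$ and that for a point $(x,y,z)\in U(\Q_p)$ one has $\inv_p(A(x,y,z))=0$ if and only if $(n-z^b,-1)=1$, which makes sense because $U=U_1\cap U_2\subseteq U_1=D(n-Z^b)$ forces $n-z^b\neq0$ on $U$. Since $p\equiv1\bmod4$, the condition $(n-z^b,-1)=1$ holds automatically for every nonzero $n-z^b$ by \Cref{quathilblemma}, so $\inv_p(A(x,y,z))=0$ for all $(x,y,z)\in U(\Q_p)$; in particular $I_p\subseteq\{0\}$.

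Then I would invoke \Cref{odd}, which gives $0\in I_p$ for every odd prime $p$ (so in particular $I_p\neq\emptyset$), and conclude $I_p=\{0\}$. Alternatively, one could avoid citing \Cref{odd} and instead produce an element of $I_p$ directly from the last equivalence of the invariant lemma: choose $z\in\Z_p$ with $p\nmid n^a-z^{ab}$ (e.g. $z=0$ or $z=1$, whichever works); then all the Hilbert symbols $(\cdot,-1)$ appearing there equal $1$ because $p\equiv1\bmod4$, whence $0\in I_p$.

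There is essentially no obstacle here; the only point requiring a little care is checking that points of $U(\Q_p)$ indeed satisfy $n-z^b\neq0$ — which is immediate from $U\subseteq U_1=D(n-Z^b)$ — so that the Hilbert symbol $(n-z^b,-1)$ is defined and the characterization of sums of two squares over $\Q_p$ from \Cref{quathilblemma} is applicable.
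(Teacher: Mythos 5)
Your proposal is correct and follows the same route as the paper: the paper's proof likewise combines the nonemptiness $0\in I_p$ from \Cref{odd} with the fact that $(t,-1)=1$ for every $t\in\Q_p^\times$ when $p\equiv1\bmod4$. Your version just spells out the intermediate steps (and offers an optional direct construction of a point with invariant $0$) that the paper leaves implicit.
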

\begin{proof}
The previous lemma implies $I_p\neq\emptyset$. Furthermore, we have $(t,-1)=1$ for all $t\in\Q_p^\times$.
\end{proof}

Hence only the case $p\equiv3\mod4$ is interesting, so let $p\equiv3\mod4$ be prime for the rest of \Cref{oddplacessection}.

\begin{lemma}\label{pfull}
If $2\mid a$ and $2\nmid v_p(n)$, then $I_p=\{0,1/2\}$.
\end{lemma}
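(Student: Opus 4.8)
The plan is to show both inclusions. First, $0\in I_p$ is already guaranteed by \Cref{odd}, so it suffices to produce some $z\in\Z_p$ with $(n^a-z^{ab},-1)=1$ and $(n-z^b,-1)=-1$; this will give $1/2\in I_p$ and hence $I_p=\{0,1/2\}$ (recalling $I_p\subseteq\{0,1/2\}$ from the previous lemma). By \Cref{quathilblemma}, since $p\equiv3\mod4$, the condition $(t,-1)=1$ for $t\in\Q_p^\times$ is equivalent to $2\mid v_p(t)$, so I need a single $z\in\Z_p$ for which $v_p(n^a-z^{ab})$ is even while $v_p(n-z^b)$ is odd.

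The natural choice is $z:=0$. Then $n^a-z^{ab}=n^a$ has valuation $a\,v_p(n)$, which is even because $2\mid a$; so $(n^a-z^{ab},-1)=1$. On the other hand $n-z^b=n$ has valuation $v_p(n)$, which is odd by hypothesis; so $(n-z^b,-1)=-1$. This immediately yields $1/2\in I_v$ via the final equivalence of the unnamed lemma in \Cref{azuconst} characterizing membership in $I_v$. Combined with \Cref{odd} ($0\in I_p$) and \Cref{1mod4}'s underlying fact that $I_p\subseteq\{0,1/2\}$ (which actually comes from the earlier lemma: quaternion algebras have order dividing $2$), we conclude $I_p=\{0,1/2\}$.

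There is essentially no obstacle here: the statement is a one-line verification once the hypotheses $2\mid a$ and $2\nmid v_p(n)$ are matched against the valuation formulas $v_p(n^a)=a\,v_p(n)$ and $v_p(n)$. The only point requiring a moment's care is confirming that $z=0$ is legitimate, i.e. that $(0,0,0)$-type degeneracies are handled — but $n\neq0$ by assumption, so $n^a-0^{ab}=n^a\neq0$ and $n-0^b=n\neq0$, meaning the point lies in $U(\Q_p)$ and both Hilbert symbols are defined. Thus the choice $z=0$ works and the lemma follows directly from \Cref{quathilblemma} and the membership criterion for $I_v$.
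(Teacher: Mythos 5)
Your proof is correct and follows essentially the same route as the paper: the paper also takes $z=0$ to get $1/2\in I_p$ (since $v_p(n^a)=a\,v_p(n)$ is even while $v_p(n)$ is odd) and $z=1$ to get $0\in I_p$, which is exactly what your citation of \Cref{odd} amounts to here (as $p\mid n$, that lemma picks $z=1$). The only cosmetic difference is that the paper re-verifies the $z=1$ case inline instead of invoking \Cref{odd}.
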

\begin{proof}
Take $z_1:=1$ and $z_2:=0$. Then
\begin{align*}
(n^a-z_1^{ab},-1)&=\phantom{-}1&&\text{ (as $2\mid0=v_p(n^a-z_1^{ab})$)}\\
(n^a-z_2^{ab},-1)&=\phantom{-}1&&\text{ (as $2\mid av_p(n)=v_p(n^a-z_2^{ab})$)}\\
(n-z_1^b,-1)&=\phantom{-}1&&\text{ (as $2\mid0=v_p(n-z_1^b)$)}\\
(n-z_2^b,-1)&=-1&&\text{ (as $2\nmid v_p(n)=v_p(n-z_2^b)$)}.
\end{align*}
\end{proof}

Let $a,b$ be odd for the rest of \Cref{oddplacessection}.
Then the following lemma simplifies the analysis of $I_p$.

\begin{lemma}\label{vnvzb}
Let $z\in\Z_p$ such that $n^a-z^{ab}\neq0$. Then the following statements are equivalent:
\begin{enumerate}[a)]
\item There are $x,y\in\Z_p$ such that $(x,y,z)\in \X(\Z_p)$ and $\inv_v(A(x,y,z))=1/2$ (hence $1/2\in I_p$).
\item $v_p(n)=v_p(z^b)$ and
\begin{align*}
1&\equiv v_p(r_p(n)^{a-1}+\dots+r_p(z)^{(a-1)b})&\mod2\\
1+v_p(n)&\equiv v_p(r_p(n)-r_p(z)^b)&\mod2\\
v_p(n)&\equiv v_p(r_p(n)^a-r_p(z)^{ab})&\mod2&.
\end{align*}
\end{enumerate}
\end{lemma}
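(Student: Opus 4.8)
The plan is to exploit the explicit formula from the previous lemma, namely that $1/2 \in I_p$ holds iff there is some $z \in \Z_p$ with $(n^a - z^{ab}, -1) = 1$ and $(n - z^b, -1) = -1$, and then translate each Hilbert symbol condition into the valuation congruences appearing in b) using \Cref{quathilblemma}. Since $p \equiv 3 \bmod 4$, that lemma tells us $(t, -1) = 1$ iff $v_p(t)$ is even. So the plan is: first show that any $z$ witnessing a) must satisfy $v_p(n) = v_p(z^b)$; then show that, given $v_p(n) = v_p(z^b)$, the three Hilbert symbol conditions $(n^{a-1} + \dots + z^{(a-1)b}, -1)$, $(n - z^b, -1)$, $(n^a - z^{ab}, -1)$ become exactly the three parity statements in b); and finally check that when $v_p(n) = v_p(z^b)$, the parities of $v_p$ of the ``unnormalized'' quantities agree with the parities of $v_p$ of the $r_p$-normalized quantities, so that b) can be written purely in terms of $r_p(n)$ and $r_p(z)$ as stated.

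Concretely, for the direction a) $\Rightarrow$ b): suppose $(x,y,z) \in \X(\Z_p)$ with $\inv_v(A(x,y,z)) = 1/2$. The factorization $X^2 + Y^2 = (n - Z^b)(n^{a-1} + \dots + Z^{(a-1)b})$ on $\X$ gives $n^a - z^{ab} = (n - z^b)(n^{a-1} + \dots + z^{(a-1)b}) = x^2 + y^2$, so $(n^a - z^{ab}, -1) = 1$, i.e.\ $v_p(n^a - z^{ab})$ is even. Meanwhile $\inv_v(A(x,y,z)) = 1/2$ forces, via the first two equivalences of the earlier lemma, that $(n - z^b, -1) = -1$ and $(n^{a-1} + \dots + z^{(a-1)b}, -1) = -1$ (using that these two quaternion symbols multiply to $(n^a - z^{ab}, -1) = 1$, and at least one of $n - z^b$, $n^{a-1} + \dots + z^{(a-1)b}$ is nonzero — indeed both are, since $U = U_1 \cap U_2$). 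Now I argue $v_p(n) = v_p(z^b)$: if $v_p(z^b) > v_p(n)$, then $v_p(n - z^b) = v_p(n)$ and, since $a$ is odd, $v_p(n^{a-1} + \dots + z^{(a-1)b}) = (a-1)v_p(n)$, which is even, contradicting $(n^{a-1}+\dots, -1) = -1$; if $v_p(z^b) < v_p(n)$, then symmetrically $v_p(n - z^b) = v_p(z^b)$ and $v_p(n^{a-1} + \dots + z^{(a-1)b}) = (a-1)v_p(z^b)$, again even, same contradiction. Hence $v_p(n) = v_p(z^b)$; write $e := v_p(n)$, $r := r_p(n)$, $s := r_p(z)^b$, both $p$-adic units. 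Then $n - z^b = p^e(r - s)$, $n^{a-1} + \dots + z^{(a-1)b} = p^{(a-1)e}(r^{a-1} + \dots + s^{a-1})$, and $n^a - z^{ab} = p^{ae}(r^a - s^a)$. So $v_p(n - z^b) = e + v_p(r-s)$, etc., and the three conditions ``$(n - z^b,-1) = -1$'', ``$(n^{a-1}+\dots,-1) = -1$'', ``$(n^a - z^{ab},-1) = 1$'' read off as: $e + v_p(r-s)$ odd, $(a-1)e + v_p(r^{a-1}+\dots+s^{a-1})$ odd, $ae + v_p(r^a - s^a)$ even. Since $a$ is odd, $(a-1)e$ is even and $ae \equiv e \bmod 2$, so these are exactly $v_p(r-s) \equiv 1 + e$, $v_p(r^{a-1} + \dots + s^{a-1}) \equiv 1$, $v_p(r^a - s^a) \equiv e \pmod 2$, which is b) with $r = r_p(n)$, $s = r_p(z)^b$; and $r_p(z)^{(a-1)b} = s^{a-1}$, $r_p(z)^{ab} = s^a$, matching the displayed expressions.

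For the converse b) $\Rightarrow$ a): given $z \in \Z_p$ with $v_p(n) = v_p(z^b)$ and the three parity conditions, the same computation shows $(n^a - z^{ab}, -1) = 1$, hence (as $p \neq \infty$) there exist $x, y \in \Z_p$ with $x^2 + y^2 = n^a - z^{ab}$ by the $v$-adic-integer part of \Cref{quathilblemma}; then $(x,y,z) \in \X(\Z_p)$, and since $(n - z^b, -1) = -1$ we get $\inv_v(A(x,y,z)) = \inv_v\leg{n - z^b, -1}{\Q_p} = 1/2$ by the first equivalence of the earlier lemma — one just has to note $n - z^b \neq 0$, which holds because $v_p(n^a - z^{ab})$ being even forces $r^a \neq s^a$, hence (as $a$ odd, $p \equiv 3 \bmod 4$ so $p \nmid a$) $r \neq s$, i.e.\ $n \neq z^b$; similarly $n^{a-1} + \dots + z^{(a-1)b} \neq 0$ so that $(x,y,z) \in U(\Z_p)$ and the formula for $\inv_v \circ A$ applies. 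The main obstacle is purely bookkeeping: carefully justifying the nonvanishing of $n - z^b$ and $n^{a-1} + \dots + z^{(a-1)b}$ (so that we are genuinely on $U = U_1 \cap U_2$ and may invoke either description of $A$), and checking that the two quaternion symbols on $U$ indeed multiply to $(n^a - z^{ab}, -1)$ — both of which follow from the gluing identity $X^2 + Y^2 = (n - Z^b)(n^{a-1} + \dots + Z^{(a-1)b})$ used to define $A$. Everything else is the elementary valuation arithmetic above, relying on $a$ being odd to control $v_p$ of the cyclotomic-type factor $r^{a-1} + r^{a-2}s + \dots + s^{a-1}$ when $r \equiv s$.
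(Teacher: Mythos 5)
Your proof is correct and follows essentially the same route as the paper: the same case analysis on $v_p(n)$ versus $v_p(z^b)$ to force equality, followed by factoring out $p^{v_p(n)}$ to translate the Hilbert symbols $(n-z^b,-1)$, $(n^{a-1}+\dots+z^{(a-1)b},-1)$, $(n^a-z^{ab},-1)$ into the parity congruences of b), using that $a$ is odd. The only (harmless) blemish is your detour justifying $n-z^b\neq0$ via ``$p\equiv3\bmod4$ so $p\nmid a$'' (which need not hold): the nonvanishing of both factors is immediate from the hypothesis $n^a-z^{ab}\neq0$, exactly as the paper implicitly uses.
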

\begin{remark}
The sum of the first two congruences in statement b) is the third one, so only two of them have to be proved.
\end{remark}
\begin{proof}[Proof of the lemma]
Assume a). Then (as $(n^{a-1}+\dots+z^{(a-1)b},-1)=-1$):
\[
2\nmid v_p(n^{a-1}+\dots+z^{(a-1)b})
\]

If $v_p(n)<v_p(z^b)$, then $2\mid(a-1)v_p(n)=v_p(n^{a-1}+\dots+z^{(a-1)b})$ yields a contradiction.

If $v_p(n)>v_p(z^b)$, then $2\mid(a-1)v_p(z^b)=v_p(n^{a-1}+\dots+z^{(a-1)b})$ yields a contradiction.

Hence $v_p(n)=v_p(z^b)$.

Then
\begin{align*}
1&\equiv v_p(n^{a-1}+\dots+z^{(a-1)b})\\
&\equiv(a-1)v_p(n)+v_p(r_p(n)^{a-1}+\dots+r_p(z)^{(a-1)b})\\
&\equiv v_p(r_p(n)^{a-1}+\dots+r_p(z)^{(a-1)b})\mod2
\end{align*}
and (as $(n-z^b,-1)=-1$)
\begin{align*}
1+v_p(n)
&\equiv v_p(n)+v_p(n-z^b)\\
&\equiv 2v_p(n)+v_p(r_p(n)-r_p(z)^b)\\
&\equiv v_p(r_p(n)-r_p(z)^b)\mod2.
\end{align*}
Conversely, b) implies
\begin{align*}
v_p(n^a-z^{ab})
&\equiv av_p(n)+v_p(r_p(n)^a-r_p(z)^{ab})\\
&\equiv(a+1)v_p(n)\equiv0\mod2,
\end{align*}
so there are $x,y\in\Z_p$ such that $(x,y,z)\in \X(\Z_p)$.

Furthermore
\[
v_p(n-z^b)\equiv v_p(n)+v_p(r_p(n)-r_p(z)^b)\equiv1\mod2,
\]
so $\inv_v(A(x,y,z))=1/2$.
\end{proof}

\begin{lemma}\label{jagypre1}
Assume $p\nmid an$. Then $1/2\not\in I_p$.
\end{lemma}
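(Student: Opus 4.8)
The plan is to assume $1/2\in I_p$ and derive a contradiction, using the valuation criterion of \Cref{vnvzb} together with the factorization
\[
n^a-z^{ab}=(n-z^b)\bigl(n^{a-1}+n^{a-2}z^b+\dots+z^{(a-1)b}\bigr).
\]
First I would record the consequences of the hypothesis $p\nmid an$: since $p\nmid n$ we have $v_p(n)=0$, so $r_p(n)=n$. Supposing $1/2\in I_p$, \Cref{vnvzb} provides a $z\in\Z_p$ with $n^a-z^{ab}\neq0$ satisfying statement b), in particular $v_p(n)=v_p(z^b)$; as $v_p(n)=0$ and $b\geq1$ this forces $v_p(z)=0$, hence $p\nmid z$ and $r_p(z)=z$.

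Next I would substitute these identifications into the congruences of \Cref{vnvzb} (recalling that the third is the sum of the first two, so only two need be used). With $r_p(n)=n$, $r_p(z)=z$ and $v_p(n)=0$ they simplify to
\begin{align*}
v_p\bigl(n^{a-1}+n^{a-2}z^b+\dots+z^{(a-1)b}\bigr)&\equiv1\mod2,\\
v_p(n-z^b)&\equiv1\mod2.
\end{align*}
In particular both valuations are odd, hence strictly positive, so $p$ divides each of the two factors appearing in the factorization above.

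The crux is then to show that $p$ cannot divide both factors at once. From $p\mid n-z^b$ we get $z^b\equiv n\mod p$, so each summand of the second factor satisfies $n^{a-1-i}z^{ib}\equiv n^{a-1-i}n^{i}=n^{a-1}\mod p$, and summing the $a$ terms gives
\[
n^{a-1}+n^{a-2}z^b+\dots+z^{(a-1)b}\equiv a\,n^{a-1}\mod p.
\]
Since $p\nmid a$ and $p\nmid n$, the right-hand side is a unit modulo $p$, so the valuation of the second factor is $0$, which is even; this contradicts the first congruence, and the contradiction shows $1/2\notin I_p$.

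The main obstacle—indeed the whole content of the lemma—is this last observation: the divisibility $p\mid n-z^b$ forced by the second congruence collapses the ``norm-like'' second factor to $a\,n^{a-1}$ modulo $p$, and it is precisely the hypothesis $p\nmid a$ (together with $p\nmid n$) that guarantees this residue is nonzero. Everything else is routine bookkeeping with $p$-adic valuations through \Cref{vnvzb}.
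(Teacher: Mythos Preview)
Your proof is correct and follows essentially the same line as the paper's: both deduce that $v_p(n-z^b)$ and $v_p(n^{a-1}+\dots+z^{(a-1)b})$ are odd, hence both factors are divisible by $p$, and then combine these to get $p\mid an^{a-1}$, contradicting $p\nmid an$. The only difference is cosmetic: the paper reads off the oddness of the two valuations directly from the Hilbert-symbol description of $\inv_p(A(x,y,z))=1/2$ (for $p\equiv3\bmod4$ one has $(t,-1)=-1\Leftrightarrow 2\nmid v_p(t)$), whereas you route the same information through \Cref{vnvzb} and first establish $v_p(z)=0$ before unwinding the congruences; this extra step is harmless but unnecessary.
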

\begin{proof}
Suppose $(x,y,z)\in U(\Q_p)\cap\X(\Z_p)$ and $\inv_p(A(x,y,z))=1/2$. Then $v_p(n-z^b)$ and $v_p(n^{a-1}+\dots+z^{(a-1)b})$ are odd. Hence $n-z^b$ and $n^{a-1}+\dots+z^{(a-1)b}$ have to be divisible by $p$, so together $p\mid an^{a-1}$. Therefore $p\mid an$.
\end{proof}

\begin{lemma}\label{jagypre2}
Assume $b\nmid v_p(n)$. Then $1/2\not\in I_p$.
\end{lemma}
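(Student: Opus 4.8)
The plan is to read this off the valuation criterion already established in \Cref{vnvzb}, of which the present statement is essentially the contrapositive. Recall that, by the lemma computing $I_v$, we have $1/2\in I_p$ precisely when there is some $z\in\Z_p$ with $n^a-z^{ab}\neq0$, $(n^a-z^{ab},-1)=1$ and $(n-z^b,-1)=-1$; by the proof of that lemma such a $z$ comes with $x,y\in\Z_p$ giving $(x,y,z)\in\X(\Z_p)\cap U(\Q_p)$ and $\inv_p(A(x,y,z))=1/2$. So I would assume $1/2\in I_p$ and fix such a $z$, which is exactly a point satisfying statement a) of \Cref{vnvzb}.

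First I would apply \Cref{vnvzb}: since statement a) holds for this $z$, so does statement b), whose first clause reads $v_p(n)=v_p(z^b)$. Here $z\neq0$, since otherwise $v_p(z^b)=\infty\neq v_p(n)$ (as $n\neq0$), so $v_p(z)$ is a non-negative integer and $v_p(z^b)=b\,v_p(z)$. Thus $v_p(n)=b\,v_p(z)$ is a multiple of $b$, i.e.\ $b\mid v_p(n)$, contradicting the hypothesis. Hence no such $z$ exists and $1/2\notin I_p$.

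I do not expect a genuine obstacle: all the arithmetic has been absorbed into \Cref{vnvzb}, and what remains is only the observation that $v_p(n)=v_p(z^b)$ forces $b\mid v_p(n)$. The single point worth stating explicitly is the exclusion of $z=0$ before writing $v_p(z^b)=b\,v_p(z)$, which is immediate from $n\neq0$. If one preferred to avoid citing \Cref{vnvzb}, one could argue directly as in \Cref{jagypre1} that $\inv_p(A(x,y,z))=1/2$ makes both $v_p(n-z^b)$ and $v_p(n^{a-1}+\dots+z^{(a-1)b})$ odd and then recover $v_p(n)=v_p(z^b)$ by comparing the valuations of the two factors, but going through \Cref{vnvzb} is shorter.
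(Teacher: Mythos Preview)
Your proof is correct and follows the same approach as the paper: both assume a point with invariant $1/2$, invoke \Cref{vnvzb} to obtain $v_p(n)=v_p(z^b)$, and conclude $b\mid v_p(n)$. Your version is merely more explicit in ruling out $z=0$ before writing $v_p(z^b)=b\,v_p(z)$, a detail the paper leaves implicit.
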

\begin{proof}
Suppose $(x,y,z)\in U(\Q_p)\cap\X(\Z_p)$ and $\inv_p(A(x,y,z))=1/2$. According to \Cref{vnvzb} we have $v_p(n)=v_p(z^b)$, so $b\mid v_p(n)$.
\end{proof}

\begin{lemma}\label{12insidenmid}
Let $p\nmid ab$. Then the following two statements are equivalent:
\begin{enumerate}[a)]
\item $1/2\in I_p$.
\item $b\mid v_p(n)$ and $2\nmid v_p(n)$ and there is some $z'\in\Z$ such that $p\mid r_p(n)^{a-1}+\dots+z'^{(a-1)b}$.
\end{enumerate}
\end{lemma}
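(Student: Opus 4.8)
The plan is to run everything through \Cref{vnvzb}, which already rewrites the condition ``$1/2\in I_p$'' as the existence of a $p$-adic witness $z$ satisfying two parity congruences. Write $u:=r_p(n)$, a unit of $\Z_p$, and for a unit $\zeta$ set $f(\zeta):=\sum_{i=0}^{a-1}u^{a-1-i}\zeta^{bi}$, so that $\zeta^{ab}-u^a=(\zeta^b-u)\,f(\zeta)$; when $z$ has $v_p(z)=v_p(n)/b$ and $\zeta=r_p(z)$, the congruences of \Cref{vnvzb} become statements about the parities of $v_p(u-\zeta^b)$ and $v_p(f(\zeta))$. The one elementary fact I would isolate at the outset is that $p\mid f(\zeta)$ forces $\zeta$ to be a unit with $\zeta^b\not\equiv u\bmod p$: if $p\mid\zeta$ then $f(\zeta)\equiv u^{a-1}$, while if $\zeta^b\equiv u$ then $f(\zeta)\equiv a\,\zeta^{b(a-1)}$, and both are units since $p\nmid a$ and $u$ is a unit.

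For a)\,$\Rightarrow$\,b) I would take a witness $(x,y,z)\in U(\Q_p)\cap\X(\Z_p)$ with $\inv_p(A(x,y,z))=1/2$, available because $I_p=\inv_p(A(U(\Q_p)\cap\X(\Z_p)))$. \Cref{vnvzb} at once gives $v_p(n)=v_p(z^b)$, hence $b\mid v_p(n)$, and tells me $v_p(f(\zeta))$ is odd, so in particular $p\mid f(\zeta)$; reducing mod $p$ and picking an integer $z'\equiv\zeta\bmod p$ yields the required $z'$ with $p\mid r_p(n)^{a-1}+\dots+z'^{(a-1)b}$. By the fact above $v_p(u-\zeta^b)=0$, so the second congruence of \Cref{vnvzb} reads $0\equiv1+v_p(n)\bmod2$, i.e.\ $2\nmid v_p(n)$; this is all of b).

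The direction b)\,$\Rightarrow$\,a) carries the real content, and it is exactly here that $p\nmid ab$ is used. Given $z'$ with $p\mid\sum_i u^{a-1-i}(z')^{bi}$, the point I would prove is that $z'\bmod p$ is a \emph{simple} root of $f$: the polynomial $\zeta^{ab}-u^a$ is separable because $p\nmid ab$ and $u\neq0$ (its derivative $ab\,\zeta^{ab-1}$ is coprime to it), and it factors as $(\zeta^b-u)f(\zeta)$, so $f$ is separable; since $(z')^b\not\equiv u$ by the fact above, $z'$ is a root of $f$, hence a simple one, i.e.\ $f'(z')\not\equiv0\bmod p$. Hensel's lemma then produces $\hat\zeta\in\Z_p$ with $f(\hat\zeta)=0$ and $f'(\hat\zeta)$ a unit, and the perturbation $\zeta:=\hat\zeta+p$ gives $f(\zeta)=f'(\hat\zeta)\,p+O(p^2)$, so $v_p(f(\zeta))=1$ is odd while $v_p(u-\zeta^b)=0$ is even. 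Putting $z:=p^{v_p(n)/b}\zeta$ (legitimate since $b\mid v_p(n)$) I get $v_p(z^b)=v_p(n)$ and, because $2\nmid v_p(n)$, precisely the congruences of \Cref{vnvzb}; as $u^a-\zeta^{ab}=(u-\zeta^b)f(\zeta)\neq0$ also guarantees $n^a-z^{ab}\neq0$, \Cref{vnvzb} gives $1/2\in I_p$.

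The hard part is forcing $v_p(f(\zeta))$ to be \emph{odd} rather than merely positive in the converse direction, and this is the only genuinely delicate step. It succeeds solely because $z'$ is a simple root of $f$ modulo $p$, which lets me move a true root $\hat\zeta$ by exactly $p$ and land on valuation $1$; separability of $f$---equivalently the hypothesis $p\nmid ab$---is therefore indispensable, and I expect stating this reduction cleanly to require the most care.
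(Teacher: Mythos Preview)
Your argument is correct. The forward direction a)\,$\Rightarrow$\,b) is essentially the paper's proof, only you extract $2\nmid v_p(n)$ directly from $v_p(u-\zeta^b)=0$ rather than by contradiction.

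In the converse b)\,$\Rightarrow$\,a) you take a genuinely different route. The paper never invokes Hensel's lemma: instead it observes that
\[
r_p(n)^a-(z'+p)^{ab}\equiv r_p(n)^a-z'^{ab}-ab\,z'^{ab-1}p\pmod{p^2},
\]
so since $p\nmid ab\,z'$ at least one of $z'$ or $z'+p$ gives $v_p(r_p(n)^a-\zeta^{ab})\leq1$; then the squeeze $1\leq v_p(f(\zeta))\leq v_p(r_p(n)^a-\zeta^{ab})\leq1$ pins down $v_p(f(\zeta))=1$. Your approach lifts $z'$ to an exact root $\hat\zeta$ of $f$ via Hensel and perturbs to $\hat\zeta+p$, reading off $v_p(f(\hat\zeta+p))=1$ from the unit derivative. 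Both arguments exploit the same underlying fact---separability of $X^{ab}-u^a$ when $p\nmid ab$---but the paper's version is more elementary (no lifting needed), while yours makes the role of simplicity of the root, and hence the necessity of the hypothesis $p\nmid ab$, more transparent.
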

\begin{proof}
Assume a). Let $(x,y,z)\in U(\Q_p)\cap\X(\Z_p)$ such that $\inv_p(A(x,y,z))=1/2$.

\Cref{vnvzb} shows that $v_p(n)=v_p(z^b)$ (so $b\mid v_p(n)$).
It also shows that $2\nmid v_p(r_p(n)^{a-1}+\dots+r_p(z)^{(a-1)b})$, so
$p\mid r_p(n)^{a-1}+\dots+r_p(z)^{(a-1)b}$.

If $2\mid v_p(n)$, then $2\nmid v_p(r_p(n)-r_p(z)^b)$ according to \Cref{vnvzb}. Therefore ${p\mid r_p(n)-r_p(z)^b}$ and $p\mid r_p(n)^{a-1}+\dots+r_p(z)^{(a-1)b}$. Together $p\mid a r_p(n)^{a-1}$, which is obviously impossible.
Therefore $2\nmid v_p(n)$.

Conversely, assume b). As $p\nmid ab$ and obviously $p\nmid z'$, we have
\[r_p(n)^a-z'^{ab}\not\equiv r_p(n)^a-z'^{ab}-abz'^{ab-1}p\equiv r_p(n)^a-(z'+p)^{ab}\mod p^2.
\]
Hence $v_p(r_p(n)^a-z'^{ab})\leq 1$ or $v_p(r_p(n)^a-(z'+p)^{ab})\leq1$. Let $z:=p^{v_p(n)/b}z'$ or $z:=p^{v_p(n)/b}(z'+p)$, respectively.

Therefore (as $p\mid r_p(n)^{a-1}+\dots+r_p(z)^{(a-1)b}$)
\[
1\leq v_p(r_p(n)^{a-1}+\dots+r_p(z)^{(a-1)b})\leq v_p(r_p(n)^a-r_p(z)^{ab})\leq1,
\]
so $v_p(r_p(n)^{a-1}+\dots+r_p(z)^{(a-1)b})=1$ and $v_p(r_p(n)^a-r_p(z)^{ab})\equiv1\equiv v_p(n)\mod2$.

Then \Cref{vnvzb} (together with its remark) shows that $1/2\in I_p$.
\end{proof}

\section{Failure of strong approximation and the integral Hasse principle}

In this section, we use the computations of local invariants of the Azumaya algebra $A$ over $\X_\Q$ defined in the previous section to obtain counterexamples to strong approximation and the integral Hasse principle.

\begin{lemma}
Let $a,b$ be odd. Then \cref{zabnagl} has $v$-adic integral solutions for each place~$v$.
\end{lemma}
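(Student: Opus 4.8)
The plan is to exhibit, for each place $v$, an explicit $v$-adic integral point on \cref{zabnagl}. Since we only need existence (not control over the invariant of $A$), I would aim for the simplest possible construction: try to solve $x^2+y^2 = n^a - z^{ab}$ in $\Z_v$ for a well-chosen $z$, using the final characterization in \Cref{quathilblemma}, namely that a $v$-adic unit (or more generally an element) is a sum of two $v$-adic integer squares precisely when $(n^a-z^{ab},-1)=1$, which in turn is governed by the explicit congruence conditions listed there. The archimedean place $v=\infty$ is immediate: take $z=0$ (or any small $z$ with $z^{ab}\le n^a$) so that $n^a-z^{ab}\ge 0$, which is a sum of two real squares.

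For odd primes $p$ the argument should be essentially the one already used in \Cref{odd}: among $z\in\{0,1\}$ at least one makes $n^a-z^{ab}$ a $p$-adic unit, and for any unit $t$ we have $(t,-1)=1$ when $p\equiv 1\bmod 4$, while for $p\equiv 3\bmod 4$ a unit is a sum of two squares iff $2\mid v_p(t)=0$, which holds. So I would pick $z\in\{0,1\}$ with $p\nmid(n^a-z^{ab})$; then $n^a-z^{ab}$ is a $p$-adic unit of valuation $0$, hence $(n^a-z^{ab},-1)=1$, giving $x,y\in\Z_p$ by \Cref{quathilblemma}. That settles all finite odd places uniformly.

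The only genuinely delicate place is $v=2$, because there the sum-of-two-squares condition for a unit requires $r_2(n^a-z^{ab})\equiv 1\bmod 4$ rather than just being a unit. Here I expect to mimic the case analysis already carried out in the place-$2$ lemmas. The natural first attempt is $z=0$: this works as soon as $r_2(n)^a\equiv 1\bmod 4$, which is exactly the hypothesis of \Cref{2notempty}, so in that case $I_2\ne\emptyset$ already gives a $2$-adic point. The remaining subcase is $r_2(n)^a\equiv 3\bmod 4$ (equivalently $r_2(n)\equiv 3\bmod 4$, using that $a$ is odd), where $z=0$ fails and one must perturb $z$. Since $a,b$ are odd, choosing an odd $z$ makes $z^{ab}$ odd, and one can adjust $z\bmod 8$ so that $n^a-z^{ab}$ becomes $\equiv 1$ or $2\bmod 8$; the cleanest route is to reduce to the $2$-adic analysis already tabulated in \Cref{not6mod8} and \Cref{6mod8ne}, which together show $I_2\ne\emptyset$ whenever $a,b$ are odd (the table in \Cref{not6mod8} handles $n\not\equiv 6\bmod 8$, and \Cref{6mod8ne} produces a point when $n\equiv 6\bmod 8$ and $a\ge 3$).

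Thus the main obstacle, and the only place needing real care, is $v=2$; everywhere else a single well-chosen $z\in\{0,1\}$ suffices. I would structure the proof as: first dispose of $\infty$ and the odd primes by the unit argument above, then invoke the already-established place-$2$ lemmas (\Cref{2notempty}, \Cref{not6mod8}, \Cref{6mod8ne}) to conclude $I_2\neq\emptyset$, noting that nonemptiness of $I_v$ is by definition the existence of a $v$-adic integral solution. The one edge case to double-check is small $a$ (the lemmas for $n\equiv 6\bmod 8$ assume $a\ge 3$), but the statement only asserts existence of local solutions for odd $a,b$, and for $a=1$ \Cref{a1I2} already gives $I_2=\{0\}\neq\emptyset$; so the cases fit together to cover all odd $a,b$.
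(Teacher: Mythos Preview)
Your approach is exactly the paper's: the proof there is literally ``See \Cref{real,odd,6mod8ne,not6mod8,a1I2}'', i.e.\ nonemptiness of $I_v$ is pulled from precisely the place-by-place lemmas you name, with the same $2$-adic case split (\Cref{a1I2} for $a=1$; \Cref{not6mod8} for $n\not\equiv6\bmod8$; \Cref{6mod8ne} for $a\ge3$ and $n\equiv6\bmod8$).

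One small slip at $v=\infty$: your choice $z=0$ gives $n^a-z^{ab}=n^a$, which is negative when $n<0$ (allowed here since $a,b$ are odd), so it is not a sum of two real squares. The paper instead takes $z=\sqrt[b]{n}$, giving the real point $(0,0,\sqrt[b]{n})$; any choice of $z$ with $z^b\le n$ (e.g.\ a sufficiently negative integer $z$) would also work.
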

\begin{proof}
See \Cref{real,odd,6mod8ne,not6mod8,a1I2}.
\end{proof}

The following theorem explains failures of strong approximation away from $\infty$. Not all $I_v$ have to be explicitly known to be able to apply it.

\begin{theorem}\label{notconstthm}
If $\X(\Z_v)\neq\emptyset$ for each $v\in\Omega$ and if $|I_w|=2$ for some $w\in\Omega$, then strong approximation ``at $Z$'' away from $\infty$ fails for the equation (\ref{zabnagl}) due to a Brauer--Manin obstruction.
\end{theorem}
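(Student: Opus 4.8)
The plan is to show that the Brauer--Manin set $\X(\A_{\{\infty\}})^A$ is a \emph{proper} subset of $\X(\A_{\{\infty\}})$. Since \Cref{skorothm} gives $\overline{\X(\Q)}\subseteq\X(\A_{\{\infty\}})^A$ — and this inclusion is valid in the topology chosen ``at $Z$'', because $\inv_v\circ A$ is locally constant in that topology, so that $\X(\A_{\{\infty\}})^A$ is closed and contains $\X(\Q)$ by the reciprocity law — a proper inclusion $\X(\A_{\{\infty\}})^A\subsetneq\X(\A_{\{\infty\}})$ forces $\overline{\X(\Q)}\subsetneq\X(\A_{\{\infty\}})$, which is exactly the failure of strong approximation ``at $Z$'' away from $\infty$. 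First I would record the simplification coming from the archimedean place: by \Cref{real} we have $I_\infty=\{0\}$ and $\X(\Z_\infty)=\X(\R)\neq\emptyset$, so the archimedean component of any adele can contribute only the invariant $0$. Consequently the membership condition for $\X(\A_{\{\infty\}})^A$ reduces to $\sum_{v\neq\infty}\inv_v(A_v(x_v))=0$, and to produce a point outside the Brauer--Manin set it suffices to exhibit an adele whose invariants over the finite places sum to $1/2$.

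To build such an adele I would choose, at every finite place $v$, an integral point $(x_v,y_v,z_v)\in\X(\Z_v)$ (possible since $\X(\Z_v)\neq\emptyset$) with a prescribed local invariant drawn from $I_v\subseteq\{0,1/2\}$. The place $w$ with $|I_w|=2$ is finite, since $I_\infty=\{0\}$; at $w$ both $0$ and $1/2$ are realized. By \Cref{odd} we have $0\in I_p$ for every odd prime $p$, so I can force the invariant to be $0$ at every odd place except possibly $w$; this guarantees that all but finitely many local invariants vanish, so the resulting tuple is a genuine element of $\X(\A_{\{\infty\}})$ and the sum of its invariants is a finite sum lying in $\{0,1/2\}$. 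The only place at which $0$ might be unavailable is $p=2$, and this can happen only when $w\neq2$. A short case distinction then settles the total invariant: writing $c$ for the forced contribution of the place $2$ when $0\notin I_2$ — which equals $1/2$ in that case and $0$ otherwise — the two choices available at $w$ realize the totals $c$ and $c+1/2$; since $c\in\{0,1/2\}$, one of these equals $1/2$. Hence an adele with finite-place invariant sum equal to $1/2$ always exists.

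Finally I would conclude: this adele lies in $\X(\A_{\{\infty\}})$ but, because its finite-place invariants sum to $1/2\neq0$ and the archimedean place can only add $0$, it does not lie in $\X(\A_{\{\infty\}})^A$. Therefore it is not in $\overline{\X(\Q)}$, so strong approximation ``at $Z$'' away from $\infty$ fails for \cref{zabnagl}, and the obstruction is precisely the one coming from the Azumaya algebra $A$. I expect the main subtlety to be twofold: first, checking that \Cref{skorothm} genuinely applies in the coarser topology used here — this rests entirely on the already-noted local constancy of $\inv_v\circ A$ in that topology, which is what makes $\X(\A_{\{\infty\}})^A$ closed — and second, the bookkeeping ensuring that the finite-place invariants can be made to sum to $1/2$ in every configuration of $I_2$, for which \Cref{odd} (giving $0\in I_p$ at all odd $p$) is the crucial input, together with the observation that $w$ cannot be $\infty$.
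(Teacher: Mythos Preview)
Your argument is correct, but it works harder than necessary. The paper's proof avoids \Cref{odd} and the case analysis on $I_2$ entirely: it simply fixes arbitrary $L_v\in\X(\Z_v)$ for every $v\neq w$ (no control on the invariants is needed), and at $w$ takes two points $L_w,L_w'$ with $\inv_w(A(L_w))\neq\inv_w(A(L_w'))$. The two resulting adeles have total invariants differing by $1/2$, so exactly one of them lies outside $\X(\A)^A$; combined with $I_\infty=\{0\}$ (so the archimedean component cannot rescue it) this already yields a point of $\X(\A_{\{\infty\}})$ outside $\X(\A_{\{\infty\}})^A$. Your approach instead \emph{constructs} a single adele with finite-place sum $1/2$, which forces you to know that $0\in I_p$ at every odd $p$ and to split on whether $0\in I_2$. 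That extra input is harmless but superfluous: the ``two-adeles'' trick sidesteps any knowledge of $I_v$ for $v\neq w$ beyond nonemptiness. On the other hand, your version has the minor advantage of being explicit about which adele is obstructed, whereas the paper's proof only asserts that one of two candidates is.
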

\begin{proof}
Let $L_v=L_v'\in \X(\Z_v)$ for all $v\in\Omega\setminus\{w\}$ and $L_w,L_w'\in \X(\Z_w)$ such that $\inv_w(A(L_w))\neq\inv_w(A(L_w'))$. Then $\sum_{v\in\Omega}\inv_v(A(L_v))\neq\sum_{v\in\Omega}\inv_v(A(L_v'))$. Hence $(L_v)_v$ or $(L_v')_v\not\in \X(\A)^A$, i.e., $(L_v)_v$ or $(L_v')_v\not\in\overline{\X(\Q)}$ (where the closure is taken with respect to the topology defined in \Cref{azuconst}) although $(L_v)_v,(L'_v)_v\in \X(\A_{\{\infty\}})$.
\end{proof}

\begin{corollary}
If $a\geq2$ and $r_2(n)^a\equiv1\mod4$ and $b\mid v_2(n)+1$, then strong approximation ``at $Z$'' away from $\infty$ fails for (\ref{zabnagl}) due to a Brauer--Manin obstruction.
\end{corollary}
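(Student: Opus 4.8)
The plan is to reduce everything to \Cref{notconstthm} applied with the distinguished place $w=2$. That theorem requires two inputs: that the scheme $\X$ has $v$-adic integral points at every place $v\in\Omega$, and that the set of local invariants $I_w$ has exactly two elements for some $w$. Both pieces are already essentially packaged in the lemmas computing the $I_v$, so the proof should amount to citing them in the right combination rather than doing new work.

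First I would observe that the three hypotheses of this corollary---namely $a\geq2$, $r_2(n)^a\equiv1\mod4$, and $b\mid v_2(n)+1$---are exactly the hypotheses of \Cref{2bnmid}. That lemma therefore gives $I_2=\{0,1/2\}$, so in particular $|I_2|=2$. This supplies the second input of \Cref{notconstthm} with $w=2$.

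Next I would establish local solvability at every place. Since $I_v=\inv_v(A(\X(\Z_v)))$ by definition, emptiness of $\X(\Z_v)$ would force $I_v=\emptyset$, so it suffices to check $I_v\neq\emptyset$ for each $v$. At $v=2$ we already have $I_2=\{0,1/2\}\neq\emptyset$ from the previous step. At the archimedean place, \Cref{real} gives $I_\infty=\{0\}$, which is nonempty. At every odd prime $p$, \Cref{odd} gives $0\in I_p$, so $I_p\neq\emptyset$. Hence $\X(\Z_v)\neq\emptyset$ for all $v\in\Omega$, which is the first input of \Cref{notconstthm}.

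With both hypotheses verified, \Cref{notconstthm} with $w=2$ concludes directly that strong approximation ``at $Z$'' away from $\infty$ fails for \cref{zabnagl} due to a Brauer--Manin obstruction. The only point that needs care---and the sole place where one could slip---is that the general local-solvability lemma stated just above (``Let $a,b$ be odd'') cannot be invoked here, because this corollary allows $a$ to be even. That is why I would instead assemble local solvability from \Cref{real}, \Cref{odd}, and \Cref{2bnmid}, none of which assumes $a$ or $b$ odd. Beyond that there is no genuine obstacle: the substance of the corollary lies entirely in the earlier invariant computations, and this statement is a short corollary of combining them.
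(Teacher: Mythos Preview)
Your proposal is correct and follows essentially the same approach as the paper: the paper's proof is the one-liner ``$\X(\Z_v)\neq\emptyset$ for all $v\neq2$ according to \Cref{real,odd}. $|I_2|=2$ according to \Cref{2bnmid},'' after which \Cref{notconstthm} applies with $w=2$. Your version spells out the same citations, with the additional (correct and useful) observation that the preceding local-solvability lemma for odd $a,b$ is unavailable here, which is exactly why \Cref{real}, \Cref{odd}, and \Cref{2bnmid} must be invoked directly.
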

\begin{proof}
$\X(\Z_v)\neq\emptyset$ for all $v\neq2$ according to \Cref{real,odd}. $|I_2|=2$ according to \Cref{2bnmid}.
\end{proof}

\begin{corollary}[cf.\ \cref{nweakgeneralm}]\label{nweakgeneral}
According to the previous corollary the following equations do not fulfill strong approximation ``at $Z$'' away from $\infty$:
\begin{align*}
x^2+y^2+z^a=n^a,&\qquad\text{$a\geq3$ odd, $n\equiv1\mod4$}\\
x^2+y^2+z^a=n^a,&\qquad\text{$a\geq2$ even, $n>0$}
\end{align*}
\end{corollary}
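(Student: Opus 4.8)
The plan is to derive this corollary as the special case $b=1$ of the preceding corollary, so that equation~(\ref{zabnagl}) becomes exactly $x^2+y^2+z^a=n^a$. With $b=1$ the divisibility hypothesis $b\mid v_2(n)+1$ holds automatically, so the only thing left to check in each of the two families is the congruence $r_2(n)^a\equiv1\mod4$, together with the standing assumption ``$n>0$ or $2\nmid ab$'' from \Cref{azuconst} that makes the construction of $A$ valid. Everything else (local solvability at every place, hence $\X(\Z_v)\neq\emptyset$ for all $v$, and $|I_2|=2$) is already packaged inside the previous corollary, so I would invoke it as a black box rather than reprove it.

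First I would handle the odd family: if $a\geq3$ is odd and $n\equiv1\mod4$, then $n$ is odd, so $v_2(n)=0$ and $r_2(n)=n\equiv1\mod4$; raising to the $a$-th power preserves this, giving $r_2(n)^a\equiv1\mod4$. Here $ab=a$ is odd, so the standing assumption holds irrespective of the sign of $n$, and $a\geq2$ is clear. Thus the hypotheses of the previous corollary are met.

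Next I would treat the even family: if $a\geq2$ is even and $n>0$, then $r_2(n)$ is by definition the odd part of $n$, hence odd, so $r_2(n)^2\equiv1\mod4$; writing $a=2c$ gives $r_2(n)^a=(r_2(n)^2)^c\equiv1\mod4$. Here $ab=a$ is even, so the clause $2\nmid ab$ fails and one must instead use the hypothesis $n>0$, which is exactly what is assumed; again $a\geq2$. So the previous corollary applies once more.

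There is essentially no hard step here: the entire content is bookkeeping confirming that the hypotheses specialize correctly under $b=1$. The single point deserving a moment's care is the standing well-definedness condition on the pair $(n,ab)$—in the even family $ab$ is even, so one genuinely needs $n>0$ rather than the parity escape clause, whereas in the odd family the parity clause suffices. Once these verifications are in place, the failure of strong approximation ``at $Z$'' away from $\infty$ for both families follows immediately from the Brauer--Manin obstruction furnished by the previous corollary.
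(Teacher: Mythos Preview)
Your proof is correct and matches the paper's approach exactly: the corollary is stated as an immediate specialization of the preceding one with $b=1$, and your verification of the hypotheses $r_2(n)^a\equiv1\bmod4$, $b\mid v_2(n)+1$, and the standing assumption on $(n,ab)$ is precisely what is implicit in the paper's one-line appeal to ``the previous corollary.''
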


\begin{remark}
Dietmann and Elsholtz showed in \cite{dietmann} and for the case $a=4$ in \cite{dietmannpub} that for $a\geq2$ and sufficiently large $N$ the number of integers $0<m\leq N$ such that $x^2+y^2+z^a=m$ does not fulfill strong approximation ``at $Z$'' away from $\infty$ is at least
\[
\begin{cases}
\frac{aN^{1/(2a)}}{\varphi(a)\log(N)},&a\text{ odd}\\
\frac{N^{1/2}}{2\log(N)},&a\text{ even}
\end{cases}
\]
The above example shows that this number is at least
\[
\begin{cases}
\frac{[N^{1/a}]+3}4,&a\text{ odd}\\
[N^{1/a}],&a\text{ even}
\end{cases}
\]
The following corollary gives a better estimate for even $a$.
\end{remark}

\begin{corollary}
If $n>0$ and $2\mid a$ and $n$ is not a sum of two squares, then strong approximation ``at $Z$'' away from $\infty$ fails for (\ref{zabnagl}) due to a Brauer--Manin obstruction.
\end{corollary}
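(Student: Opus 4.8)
The plan is to deduce the statement directly from \Cref{notconstthm}, which requires verifying two things: that $\X(\Z_v)\neq\emptyset$ for every place $v\in\Omega$, and that $|I_w|=2$ for at least one $w$. Since the hypotheses here are exactly $n>0$, $2\mid a$, and $n$ not a sum of two squares, I expect both conditions to follow almost immediately from the local computations already carried out, the only real content being the translation of the sum-of-two-squares condition into a statement about $p$-adic valuations.

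For local solvability I would argue place by place. At $v=\infty$, the hypothesis $n>0$ lets me invoke \Cref{real}, giving $I_\infty=\{0\}\neq\emptyset$ and hence $\X(\R)\neq\emptyset$. At every odd prime $p$, \Cref{odd} gives $0\in I_p$, so $\X(\Z_p)\neq\emptyset$. At $p=2$ the key observation is that $a$ even forces $r_2(n)^a$ to be the square of a $2$-adic unit, hence $\equiv1\mod8$ and in particular $\equiv1\mod4$; \Cref{2notempty} then yields $I_2\neq\emptyset$, so $\X(\Z_2)\neq\emptyset$. This settles the first hypothesis.

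For the second hypothesis I would use the classical characterization of sums of two squares: a positive integer $n$ fails to be a sum of two squares precisely when some prime $p\equiv3\mod4$ divides $n$ to an odd power. Choosing such a $p$, I have $2\mid a$ and $2\nmid v_p(n)$ simultaneously, which are exactly the hypotheses of \Cref{pfull}; that lemma gives $I_p=\{0,1/2\}$, so $|I_p|=2$. Taking $w=p$ and applying \Cref{notconstthm} then produces the desired Brauer--Manin obstruction to strong approximation ``at $Z$'' away from $\infty$.

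The argument involves no serious obstacle; the main point to get right is simply recognizing that the failure of $n$ to be a sum of two squares is equivalent to the existence of a prime $p\equiv3\mod4$ with $v_p(n)$ odd, which feeds directly into \Cref{pfull}, while the parity of $a$ is what makes the place $2$ harmless via \Cref{2notempty}.
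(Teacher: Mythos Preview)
Your proposal is correct and follows essentially the same route as the paper's proof: it invokes \Cref{real,odd,2notempty} for local solvability (using $2\mid a$ to ensure $r_2(n)^a\equiv1\bmod4$), translates the failure of the sum-of-two-squares condition into the existence of a prime $p\equiv3\bmod4$ with $2\nmid v_p(n)$, and then applies \Cref{pfull} and \Cref{notconstthm}.
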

\begin{proof}
There has to be some prime $p\equiv3\mod4$ such that $2\nmid v_p(n)$. Now $\X(\Z_v)\neq\emptyset$ for all $v\in\Omega$ according to \Cref{real,odd,2notempty} and $|I_p|=2$ according to \Cref{pfull}.
\end{proof}

Unfortunately, \Cref{notconstthm} cannot explain the overall absence of integral solutions ($\X(\A)^A\neq\emptyset$ whenever its conditions are satisfied) and it does not return any explicit points which are not contained in $\X(\A)^A$. To accomplish this, $I_v$ has to be explicitly computed for every $v\in\Omega$.

The following theorem is a generalization of the theorem in \cite{jagykaplansky} where an elementary proof for the case $a=b=3$ and $n=6q$ for primes $q\equiv1\mod4$ is given.

\begin{theorem}\label{jagythm}
Let $a,b\geq3$ be odd integers and $n\equiv6\mod8$ such that $b\nmid v_p(n)$ for all prime divisors $p\equiv3\mod4$ of $an$.

Then (\ref{zabnagl}) has no solutions in $\Z$ although it has $v$-adic integral solutions for each place $v$ and this is explained by a Brauer--Manin obstruction.

In particular, the integral Hasse principle fails.
\end{theorem}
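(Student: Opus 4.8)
The plan is to exhibit the Brauer--Manin obstruction coming from the Azumaya algebra $A$ constructed in \Cref{azuconst}: I would compute the sets $I_v=\inv_v(A(\X(\Z_v)))$ at every place and show that, at an integral adelic point, the local invariants are forced to sum to $1/2\neq0$, which by \Cref{skorothm} rules out global integral solutions. As a first step I would record local solubility: since $a,b$ are odd, the lemma preceding \Cref{notconstthm} already guarantees $\X(\Z_v)\neq\emptyset$ for every place $v$. This supplies the ``$v$-adic integral solutions for each place $v$'' part of the statement and shows $\X(\A)\neq\emptyset$.

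Next I would assemble the local computations of \Cref{oddplacessection} and the place-$2$ analysis into a complete list of the $I_v$. At the archimedean place, \Cref{real} gives $I_\infty=\{0\}$. At $2$, the hypotheses $a,b\geq3$ odd and $n\equiv6\mod8$ are exactly those of \Cref{6mod8}, so $I_2=\{1/2\}$. For odd primes we always have $0\in I_p$ by \Cref{odd}, so it only remains to verify $1/2\notin I_p$. For $p\equiv1\mod4$ this is \Cref{1mod4}. For $p\equiv3\mod4$ I would split into two cases: if $p\nmid an$ then \Cref{jagypre1} gives $1/2\notin I_p$; and if $p\mid an$ then, $p$ being a prime divisor of $an$ with $p\equiv3\mod4$, the hypothesis yields $b\nmid v_p(n)$, whence \Cref{jagypre2} again gives $1/2\notin I_p$. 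Thus $I_p=\{0\}$ for every odd prime $p$.

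Finally I would conclude by reciprocity. Any integral solution $(x,y,z)\in\X(\Z)$ determines an adelic point integral at every place, so $\inv_v(A(x,y,z))\in I_v$ for all $v$; by the list above this forces $\inv_v(A(x,y,z))=0$ for $v\neq2$ and $\inv_2(A(x,y,z))=1/2$, hence $\sum_v\inv_v(A(x,y,z))=1/2$. On the other hand $A(x,y,z)\in\Br(\Q)$, so the fundamental exact sequence of global class field theory (equivalently, \Cref{skorothm} applied to the diagonal point, which lies in $\overline{\X(\Q)}\subseteq\X(\A)^A$) forces this sum to vanish. This contradiction shows $\X(\Z)=\emptyset$, and since every integral adelic point in $\prod_v\X(\Z_v)$ has invariant sum $1/2$ and hence fails to lie in $\X(\A)^A$, the failure is genuinely explained by a Brauer--Manin obstruction.

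The substantive work lives in the earlier local lemmas; the only real care needed here is the case analysis at primes $p\equiv3\mod4$, where one must check that the hypothesis $b\nmid v_p(n)$ for prime divisors of $an$ precisely covers the primes not already handled by \Cref{jagypre1}. I expect this bookkeeping, together with correctly invoking the obstruction formalism to pass from ``the local invariants cannot sum to zero'' to ``there is no global integral solution'', to be the main (though modest) obstacle.
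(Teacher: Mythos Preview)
Your proposal is correct and follows essentially the same argument as the paper: compute $I_\infty=\{0\}$ via \Cref{real}, $I_2=\{1/2\}$ via \Cref{6mod8}, $I_p=\{0\}$ for $p\equiv1\bmod4$ via \Cref{1mod4}, and $I_p=\{0\}$ for $p\equiv3\bmod4$ by combining \Cref{odd} with the dichotomy \Cref{jagypre1}/\Cref{jagypre2}, then conclude that every integral adelic point has invariant sum $1/2$, so $\X(\Z)\subseteq\X(\A)^A\cap\prod_v\X(\Z_v)=\emptyset$. The paper's write-up is terser but the content is identical.
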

\begin{proof}
We have $I_\infty=\{0\}$ according to \Cref{real} and $I_p=\{0\}$ for all primes $p\equiv1\mod4$ according to \Cref{1mod4}. Moreover, $I_2=\{1/2\}$ according to \Cref{6mod8}. Finally, $I_p=\{0\}$ for all primes $p\equiv3\mod4$ according to \Cref{odd,jagypre1,jagypre2}.

Hence there are $v$-adic integral solutions for each $v$ and $\sum_v\inv_v(A_v(x_v,y_v,z_v))=1/2$ for each $(x_v,y_v,z_v)_v\in\prod_v\X(\Z_v)$. This implies $\X(\Z)\subseteq \X(\A)^A\cap \X(\Z)=\emptyset$.
\end{proof}
\begin{remark}
For odd $a,b\geq3$ there are always infinitely many integers $n\equiv6\mod8$ such that $b\nmid v_p(n)$ for all prime divisors $p\equiv3\mod4$ of $an$, so there are infinitely many integers $n$ such that (\ref{zabnagl}) has no integral solutions. This confirms part b) of the remark following the Theorem in \cite{jagykaplansky}.
\end{remark}
\begin{proof}
Take $n:=2l\prod_{p\mid a}p$ where $l$ is the product of distinct primes such that $l\equiv1\mod4$ if $\prod_{p\mid a}p\equiv3\mod4$ and $l\equiv3\mod4$ otherwise.
\end{proof}

Dietmann and Elsholtz proved in \cite{dietmannpub} and \cite{dietmann} that (\ref{zabnagl}) does not fulfill strong approximation away from $\infty$ if
\begin{itemize}
\item $a=2$ and $n\equiv7\mod8$ is prime or
\item $a\geq3$ is odd, $b=1$ and $p\equiv1\mod4a$ is a prime such that $n=p^2$.
\end{itemize}

This can also be proved using the same strategy as above.

\section{Fulfillment of strong approximation}

Let $k\geq3$ be an odd integer and $m\in\Z\setminus\{0\}$.

Davenport and Heilbronn showed in \cite{davenportheilbronn}, that for all except $o(N)$ integers $1\leq m\leq N$ the equation
\begin{equation}\tag{1}
x^2+y^2+z^k=m\label{zk2}
\end{equation}
has a solution with $x,y,z\in\Z$.

As above, let
\[
\X:=\Spec\Z[X,Y,Z]/(X^2+Y^2+Z^k-m)
\]
and
\[
\X_\Q:=\X\otimes_\Z\Q=\Spec\Q[X,Y,Z]/(X^2+Y^2+Z^k-m)
\]
and
\[
U:=D(m-Z^k)\subseteq \X_\Q.
\]
Given $k$ and $m$ there may be multiple triples $(a,b,n)$ of integers with $a,b>0$ such that $k=ab$ and $m=n^a$, i.e., there may be multiple Azumaya algebras to consider for Brauer--Manin obstruction.
To this end, let $S(a,b,n):=\left(\prod_v\X(\Z_v)\right)^A$ with $A$ defined as in \Cref{azuconst} and let $I_v(a,b,n):=I_v$ as defined in \Cref{azuconst}.

Then we can define the subset $L$ of the solutions in $\A$ to \cref{zk2} for which there is no Brauer--Manin obstruction corresponding to any Azumaya algebra from \Cref{azuconst}:
\[
L:=\bigcap\limits_{{{\scriptstyle a,b,n\in\Z:\atop\scriptstyle a,b>0,}\atop\scriptstyle k=ab,}\atop\scriptstyle m=n^a}S(a,b,n)
\]
Of course, $\X(\Z)\subseteq L$.

The next theorem will show that Brauer--Manin obstructions with such Azumaya algebras explain all failures of strong approximation ``at $Z$'' away from $\infty$ if Schinzel's hypothesis (H) is true.

\begin{lemma}\label{rootdeg}
Let $K$ be a field, $k\geq1$ an odd integer and $u\in K$  such that $u$ is not a $p$-th power in $K$ for any prime divisor $p$ of $k$.
Then $[K(\sqrt[k]u):K]=k$.
\end{lemma}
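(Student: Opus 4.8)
The plan is to prove the statement by induction on the odd integer $k$, peeling off one prime factor at a time and controlling the resulting tower of fields with a norm computation. The base case $k=1$ is immediate, since then the hypothesis is vacuous and $[K(\sqrt[k]{u}):K]=1$.

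For the inductive step, I would fix a prime divisor $p$ of $k$ and write $k=pk'$ with $k'$ odd. First I would establish the classical prime-degree lemma: if $u$ is not a $p$-th power in $K$, then $X^p-u$ is irreducible over $K$, so that $\gamma:=\sqrt[p]{u}$ satisfies $[K(\gamma):K]=p$ and has minimal polynomial $X^p-u$. The standard argument factors $X^p-u=fg$ in a splitting field, where the roots are $\zeta^i\gamma$ with $\zeta$ a primitive $p$-th root of unity; the constant term of a proper factor of degree $d$ has the form $\pm\zeta^t\gamma^d\in K$, whence $u^d=(\zeta^t\gamma^d)^p\in K^p$, and since $\gcd(d,p)=1$ a Bézout relation $rd+sp=1$ forces $u\in K^p$, a contradiction.

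The key step, and the main obstacle, is to verify that $\gamma$ remains not a $q$-th power in the larger field $K(\gamma)$ for every prime $q\mid k'$, so that the induction hypothesis can be applied to $(K(\gamma),\gamma,k')$. Here I would use the norm $N:=N_{K(\gamma)/K}$. Because the minimal polynomial of $\gamma$ is $X^p-u$ and $p$ is odd, one computes $N(\gamma)=(-1)^{p+1}u=u$. Now every prime $q\mid k'$ also divides $k$, so by hypothesis $u\notin K^q$; if we had $\gamma=\delta^q$ for some $\delta\in K(\gamma)$, then $u=N(\gamma)=N(\delta)^q\in K^q$, a contradiction. This is exactly where oddness of $k$ is essential: for $p=2$ the norm would instead give $N(\gamma)=-u$, and the argument would break.

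Finally, since $\gamma$ is not a $q$-th power in $K(\gamma)$ for any prime $q\mid k'$, the induction hypothesis applied to the odd integer $k'<k$ over the field $K(\gamma)$ yields $[K(\gamma)(\sqrt[k']{\gamma}):K(\gamma)]=k'$. As $K(\gamma)(\sqrt[k']{\gamma})=K(\alpha)$ with $\alpha^{k'}=\gamma$ and hence $\alpha^{k}=u$, multiplicativity of degrees gives $[K(\sqrt[k]{u}):K]=[K(\alpha):K(\gamma)]\,[K(\gamma):K]=k'\cdot p=k$, completing the induction. The only nontrivial ingredients are the prime-degree lemma and the norm identity $N(\gamma)=u$, the latter packaging all primes dividing $k$ uniformly into a single contradiction.
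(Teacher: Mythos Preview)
Your argument is correct. The paper does not give a proof at all: it simply cites \cite[Thm.~VI.9.1]{lang}, the standard irreducibility criterion for $X^n-a$ (which for odd $n$ requires exactly your hypothesis that $a\notin K^p$ for every prime $p\mid n$). Your proof is a self-contained derivation of the odd-exponent special case of that theorem, and the route---induct on $k$, peel off a prime $p$, then propagate the ``not a $q$-th power'' condition to $K(\sqrt[p]{u})$ via the norm identity $N_{K(\gamma)/K}(\gamma)=(-1)^{p+1}u=u$---is clean and standard. The norm step is where the oddness hypothesis enters, exactly as you note; the full Lang theorem needs the extra condition $a\notin -4K^4$ when $4\mid n$ precisely because this norm trick breaks for $p=2$. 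So: same underlying mathematics, but you have unpacked the citation into an explicit proof.
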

\begin{proof}
See \cite[Thm.\ VI.9.1]{lang}.
\end{proof}

\begin{lemma}\label{phiirred}
Let $d,b\geq1$ be odd integers and $n\in\Q$ such that $n$ is not a $p$-th power for any prime divisor $p$ of $b$.

Then $\phi_d(X^b/n)\in\Q[X]$ is irreducible (where $\phi_d$ is the $d$-th cyclotomic polynomial).
\end{lemma}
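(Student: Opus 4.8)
The plan is to compute the degree of the field generated by a single root and show it equals the degree of the polynomial. Write $L:=\Q(\zeta_d)$ for a fixed primitive $d$-th root of unity $\zeta_d$, so that $[L:\Q]=\varphi(d)$ and $L/\Q$ is abelian. The polynomial $\phi_d(X^b/n)$ has degree $b\varphi(d)$ in $X$, and an element $\alpha\in\overline{\Q}$ is a root exactly when $\alpha^b/n$ is a primitive $d$-th root of unity, i.e.\ when $\alpha^b=n\zeta$ for some primitive $\zeta$. Fixing a root $\alpha$ with $\alpha^b=n\zeta_d$, I note that $\zeta_d=\alpha^b/n\in\Q(\alpha)$, so $L\subseteq\Q(\alpha)$ and $\Q(\alpha)=L(\alpha)$. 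If I can show $[\Q(\alpha):\Q]=b\varphi(d)$, then the minimal polynomial of $\alpha$ over $\Q$ has degree equal to $\deg\phi_d(X^b/n)$ and divides it, so $\phi_d(X^b/n)$ is a scalar multiple of that minimal polynomial and hence irreducible.

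Via the tower $\Q\subseteq L\subseteq L(\alpha)=\Q(\alpha)$ with $[L:\Q]=\varphi(d)$, it suffices to prove $[L(\alpha):L]=b$. Since $\alpha=\sqrt[b]{n\zeta_d}$, I would apply \Cref{rootdeg} over the field $K=L$ with $u=n\zeta_d$ and $k=b$ (odd), which yields $[L(\alpha):L]=b$ provided that $n\zeta_d$ is not a $p$-th power in $L$ for any prime $p\mid b$. This reduction is routine; the substance of the argument is this $p$-th power statement, which I expect to be the main obstacle.

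To establish it, fix a prime $p\mid b$ and suppose for contradiction that $n\zeta_d=\beta^p$ with $\beta\in L$. The automorphism $\sigma\in\Gal(L/\Q)$ determined by $\zeta_d\mapsto\zeta_d^{-1}$ (complex conjugation) fixes $n$, so $\sigma(\beta)^p=n\zeta_d^{-1}$, and multiplying the two relations gives $(\beta\,\sigma(\beta))^p=n^2$ with $\beta\,\sigma(\beta)\in L$. Because $p$ is odd, $\gcd(2,p)=1$, so from $n^2\in(L^\times)^p$ I deduce $n\in(L^\times)^p$, that is $\sqrt[p]{n}\in L$. Then $\Q(\sqrt[p]{n})$ is a subfield of the abelian extension $L/\Q$, hence normal over $\Q$; being normal it contains every root of $x^p-n$, in particular $\zeta_p$. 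But $n$ is not a $p$-th power in $\Q$, so $[\Q(\sqrt[p]{n}):\Q]=p$ by \Cref{rootdeg}, whereas $[\Q(\zeta_p):\Q]=p-1$, and $p-1\nmid p$ for $p\geq3$. This contradiction shows that $n\zeta_d$ is not a $p$-th power in $L$, completing the reduction.

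The decisive feature is that the conjugation trick converts the question about the algebraic number $n\zeta_d$ into one about the rational number $n$, and the final degree incompatibility $p-1\nmid p$ is insensitive to whether $p\mid d$, i.e.\ to whether $\zeta_p$ already lies in $L$. A more hands-on approach stalls exactly here: a valuation count, or the identity $N_{L/\Q}(n\zeta_d)=n^{\varphi(d)}$ for $d>1$, only forces $n$ to be a $p$-th power when $p\nmid\varphi(d)$, and is inconclusive precisely at primes $\ell\mid d$ whose ramification index in $L/\Q$ is divisible by $p$. Passing through $\sqrt[p]{n}\in L$ avoids this difficulty altogether, which is why I would organize the proof around that reduction rather than around ramification.
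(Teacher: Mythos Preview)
Your proof is correct and follows the same overall reduction as the paper---both arguments reduce to showing that $n\zeta_d$ is not a $p$-th power in $L=\Q(\zeta_d)$ for any prime $p\mid b$, by way of \Cref{rootdeg}---but the endgame is organized differently. The paper splits into cases: when $p\mid d$ it divides, forming $(x/\overline x)^p=\zeta_d^2$ and using that $\mu_{2d}^p=\mu_{2d/p}\not\ni\zeta_d^2$; when $p\nmid d$ it writes $y=x/\zeta_d^r$ with $rp\equiv1\bmod d$ to get $y^p=n$, then shows $y\in\Q$ via a Galois-fixed-point argument. You instead \emph{multiply} conjugates to obtain $(\beta\,\overline\beta)^p=n^2$, use $\gcd(2,p)=1$ to extract $n\in(L^\times)^p$, and finish uniformly with the normality/degree contradiction $\Q(\zeta_p)\subseteq\Q(\sqrt[p]{n})$, $p-1\nmid p$. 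Your route avoids the case distinction entirely and is a bit slicker; the paper's route, in the $p\nmid d$ case, has the minor advantage of producing an explicit rational $p$-th root of $n$.
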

\begin{proof}
For any positive integer $s$, let $\zeta_s$ denote a primitive $s$-th root of unity. The polynomial $\phi_d(X^b/n)$ has a root $\sqrt[b]n\cdot\zeta_{bd}$. Assume $\phi_d(X^b/n)$ is reducible. Hence (as $\deg(\phi_d(X^b/n))=b\varphi(d)$)
\begin{align*}
[\Q(\sqrt[b]n\cdot\zeta_{bd}):\Q(\zeta_d)]\varphi(d)
&=[\Q(\sqrt[b]n\cdot\zeta_{bd}):\Q(\zeta_d)]\cdot[\Q(\zeta_d):\Q]\\
&=[\Q(\sqrt[b]n\cdot\zeta_{bd}):\Q]\\
&<b\varphi(d),
\end{align*}
so $[\Q(\sqrt[b]n\cdot\zeta_{bd}):\Q(\zeta_d)]<b$. \Cref{rootdeg} therefore implies that $n\cdot\zeta_d$ is a $p$-th power in $\Q(\zeta_d)$ for some prime divisor $p$ of $b$, say $x\in\Q(\zeta_d)$ and $x^p=n\cdot\zeta_d$.

If $p\mid d$, then $(x/\overline x)^p=\zeta_d^2$, but this is impossible as the roots of unity in $\Q(\zeta_d)$ are $\mu_{2d}$ but $\mu_{2d}^p=\mu_{2d/p}\not\ni\zeta_d^2$.

Hence $p\nmid d$. Let then $r\in\Z$ such that $rp\equiv1\mod d$. Hence for $y:=x/\zeta_d^r$ we have
\[
y^p=\frac{x^p}{\zeta_d^{rp}}=\frac{n\cdot\zeta_d}{\zeta_d}=n,
\]
so in particular every $\tau\in\Gal(\Q(\zeta_d)|\Q)$ fulfills $(\tau y)^p=\tau y^p=y^p$. Therefore $(\tau y/y)^p=1$ but this is only possible if $\tau y=y$ as $\Q(\zeta_d)$ contains no primitive $p$-th root of unity. Hence we conclude that $y\in\Q$. This yields a contradiction as $y^p=n$ but $n$ is not a $p$-th power in $\Q$ by assumption.
\end{proof}

Recall the statement of Schinzel's hypothesis (H):
\begin{hypothesis}[H]
Let $f_1,\dots,f_s\in\Z[X]$ be polynomials irreducible in $\Q[X]$ such that
\[\gcd\{f_1(x)\cdots f_s(x)\mid x\in\Z\}=1
\]
and $f_i(x)\rightarrow\infty$ for $x\rightarrow\infty$ for each $1\leq i\leq s$. Then there is some $x\in\Z$ such that $f_i(x)$ is prime for each $i$.
\end{hypothesis}

Below, we will use the following consequence of Schinzel's hypothesis (H).

\begin{lemma}\label{schinzelhelp}
Let $f_1,\dots,f_s\in\Z[X]$ be polynomials irreducible in $\Q[X]$ such that 
\[\gcd\{f_1(x)\cdots f_s(x)\mid x\in\Z\}=1
\]
and $f_i(x)\rightarrow\infty$ for $x\rightarrow\infty$ for each $1\leq i\leq s$. Let furthermore $c,e\in\Z$ such that $v_p(f_i(c))\leq v_p(e)$ for all prime divisors $p$ of $e$ and each $i$. Assume Schinzel's hypothesis~(H) is true. Then there is some $x\equiv c\mod e$ such that $\frac{f_i(x)}{\gcd(f_i(c),e)}$ is prime for each $i$.
\end{lemma}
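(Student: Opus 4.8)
The plan is to reduce the statement to a single application of Schinzel's hypothesis (H) to a modified family of polynomials obtained by the substitution $x=c+eY$. Writing $d_i:=\gcd(f_i(c),e)$, I would introduce
\[
g_i(Y):=\frac{f_i(c+eY)}{d_i}\in\Q[Y]
\]
and first check $g_i\in\Z[Y]$. Expanding $f_i(c+eY)=\sum_k b_{ik}Y^k$ by the binomial theorem, each coefficient $b_{ik}$ with $k\geq1$ carries a factor $e^k$ and is therefore divisible by $e$, hence by $d_i\mid e$; the constant term $b_{i0}=f_i(c)$ is divisible by $d_i$ by the definition of $d_i$. Thus $d_i\mid b_{ik}$ for all $k$, so $g_i\in\Z[Y]$ and $g_i(Y)=f_i(c+eY)/\gcd(f_i(c),e)$, which is exactly the quantity whose primality we seek.

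Next I would verify the three hypotheses of (H) for $g_1,\dots,g_s$ (we may assume $e\geq1$, as the congruence $x\equiv c\bmod e$ requires). The assignment $X\mapsto c+eY$ defines a $\Q$-algebra isomorphism $\Q[X]\to\Q[Y]$ (with inverse $Y\mapsto(X-c)/e$), so the image $f_i(c+eY)$ of the irreducible $f_i$ is irreducible in $\Q[Y]$, and $g_i$ --- differing from it only by the nonzero scalar $1/d_i$ --- is irreducible as well. Since $e>0$ and $d_i\geq1$, the values $g_i(y)=f_i(c+ey)/d_i$ tend to $\infty$ as $y\to\infty$, inheriting this from $f_i$.

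The remaining, and main, point is the content condition $\gcd\{g_1(y)\cdots g_s(y)\mid y\in\Z\}=1$; this is precisely where the assumption $v_p(f_i(c))\leq v_p(e)$ is used. Fix a prime $q$; I must exhibit a single $y$ with $q\nmid g_i(y)$ for all $i$ at once. If $q\mid e$, I take $y=0$: here $g_i(0)=f_i(c)/d_i$ (note $v_q(f_i(c))\leq v_q(e)<\infty$ forces $f_i(c)\neq0$), and since $v_q(d_i)=\min(v_q(f_i(c)),v_q(e))=v_q(f_i(c))$ by hypothesis, one gets $v_q(g_i(0))=0$, i.e.\ $q\nmid g_i(0)$. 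If $q\nmid e$, then each $d_i$ is prime to $q$, so $q\mid g_i(y)\Leftrightarrow q\mid f_i(c+ey)$; using that $e$ is invertible modulo $q$, I choose $x_0$ with $q\nmid\prod_i f_i(x_0)$ (such $x_0$ exists by the gcd hypothesis on the $f_i$) and solve $c+ey\equiv x_0\bmod q$, which yields $q\nmid\prod_i f_i(c+ey)$ and hence $q\nmid g_i(y)$ for every $i$. As $q$ was arbitrary, the content equals $1$.

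With all three hypotheses established, Schinzel's hypothesis (H) provides $y\in\Z$ for which every $g_i(y)$ is prime; putting $x:=c+ey\equiv c\bmod e$ makes $f_i(x)/\gcd(f_i(c),e)=g_i(y)$ prime for each $i$, as required. The genuinely delicate step is the content computation, and within it the case $q\mid e$: the hypothesis $v_p(f_i(c))\leq v_p(e)$ is exactly what guarantees that dividing $f_i(c)$ by $d_i$ strips off its entire $q$-part, so that $g_i(0)$ becomes a unit modulo $q$.
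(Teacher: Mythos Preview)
Your proof is correct and follows essentially the same approach as the paper: define $g_i(Y)=f_i(c+eY)/\gcd(f_i(c),e)$, verify integrality and irreducibility, check the gcd condition by splitting into the cases $q\mid e$ (use $y=0$) and $q\nmid e$ (invert $e$ modulo $q$), and apply hypothesis~(H). The paper merely declares the integrality and irreducibility of $g_i$ ``obvious'' where you spell them out, but the argument is otherwise the same.
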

\begin{proof}
Let $g_i(X):=\frac{f_i(eX+c)}{\gcd(f_i(c),e)}$. Obviously $g_i\in\Z[X]$ and $g_i$ is irreducible in $\Q[X]$.

Assume that $p$ is a prime divisor of $\gcd\{g_1(x)\cdots g_s(x)\mid x\in\Z\}$. There must be some $y\in\Z$ such that $p\nmid f_1(y)\cdots f_s(y)$.

For $p\nmid e$ there is some $r\in\Z$ such that $er+c\equiv y\mod p$. Then
\[p\mid g_1(r)\cdots g_s(r)\mid f_1(er+c)\cdots f_s(er+c),
\]
so $0\equiv f_1(er+c)\cdots f_s(er+c)\equiv f_1(y)\cdots f_s(y)\mod p$ yields a contradiction.

For $p\mid e$ we have $v_p(f_i(c))\leq v_p(e)$ and hence $p\nmid \frac{f_i(c)}{\gcd(f_i(c),e)}=g_i(0)$ for each $i$. Therefore $p\nmid g_1(0)\cdots g_s(0)$, which is a contradiction too.

Hence $\gcd\{g_1(x)\cdots g_s(x)\mid x\in\Z\}=1$ and $g_i\in\Z[X]$ is irreducible in $\Q[X]$ and $g_i(x)\rightarrow\infty$ for $x\rightarrow\infty$ for each $i$, so Schinzel's hypothesis (H) implies that there is some $z\in\Z$ such that $g_i(z)=\frac{f_i(ez+c)}{\gcd(f_i(c),e)}$ is prime for each $i$. The claim follows with $x:=ez+c$.
\end{proof}

\begin{theorem}[cf.\ \cref{schinzelthmm}]\label{schinzelthm}
If Schinzel's hypothesis (H) is true, then $\overline{\X(\Z)}=L$ (where the closure is taken with respect to the topology defined in \Cref{azuconst}).
\end{theorem}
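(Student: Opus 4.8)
The plan is to prove the two inclusions $\overline{\X(\Z)}\subseteq L$ and $L\subseteq\overline{\X(\Z)}$ separately. For the first inclusion, every $P\in\X(\Z)$ satisfies $\sum_v\inv_v(A(P))=0$ by the reciprocity law (this is the inclusion $\X(\Q)\subseteq\X(\A)^A$ of \Cref{skorothm}), so $\X(\Z)\subseteq S(a,b,n)$ for every admissible triple $(a,b,n)$ and hence $\X(\Z)\subseteq L$. As $\inv_v\circ A$ is locally constant in the topology of \Cref{azuconst} and vanishes identically on $\X(\Z_v)$ for all but finitely many $v$, each $S(a,b,n)$ is open and closed; thus $L$ is closed and $\overline{\X(\Z)}\subseteq L$. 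The substance is the reverse inclusion.

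Fix $(P_v)_v=(x_v,y_v,z_v)_v\in L$ and a finite set $T$ of finite places with prescribed precisions; by the density of $U(\Q_v)\cap\X(\Z_v)$ in $\X(\Z_v)$ we may assume $m-z_v^k\neq0$ for $v\in T$. Since the $X$- and $Y$-coordinates carry the indiscrete topology and we approximate away from $\infty$, it suffices to find $z\in\Z$ with $z\equiv z_v$ to the required precision for all $v\in T$ and with $m-z^k$ a positive integer that is a sum of two squares; any $m-z^k=x^2+y^2$ then gives $(x,y,z)\in\X(\Z)$. Taking $z$ very negative secures $m-z^k>0$, so only the condition $v_q(m-z^k)\in2\Z$ for all primes $q\equiv3\bmod4$ remains. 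To control it, let $a\mid k$ be maximal with $m=n^a$ for an integer $n$ and set $b:=k/a$; then $a$ is odd and $n$ is not a $p$-th power for any prime $p\mid b$. From $X^a-1=\prod_{d\mid a}\phi_d(X)$ we obtain
\[
m-Z^k=n^a-(Z^b)^a=\prod_{d\mid a}F_d(Z),\qquad F_1=n-Z^b,
\]
where each $F_d$ equals $n^{\varphi(d)}\phi_d(Z^b/n)$ up to sign and is irreducible over $\Q$ by \Cref{phiirred}.

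For every divisor $a'\mid a$ the triple $(a',k/a',n^{a/a'})$ is admissible and its algebra measures $(\prod_{d\mid(a/a')}F_d,\,-1)_v$; the vanishing of the associated invariant sums (because $(P_v)_v\in L$) reads $\prod_{d\mid e}S_d=1$ for all $e\mid a$, where $S_d:=\prod_v(F_d(P_v),-1)_v$. A Möbius induction over the divisors of $a$ then yields $S_d=1$ for every $d\mid a$. Enlarge $T$ to contain $2$, all primes dividing $kn$ or a resultant $\Res(F_d,F_{d'})$, and every finite place where some $(F_d(P_v),-1)_v=-1$; the identity $S_d=1$ then gives $\prod_{v\in T}(F_d(P_v),-1)_v=(F_d(P_\infty),-1)_\infty$. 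Now apply \Cref{schinzelhelp} to the $F_d$ (after $Z\mapsto-Z$, so that each $F_d\to+\infty$), with $c$ reducing to $z_v$ at $T$ and $v_p(e)>v_p(F_d(c))$ at the relevant primes. This produces $z\equiv z_v\pmod{p^{N_v}}$ for $v\in T$, $m-z^k>0$, and $F_d(z)=\gcd(F_d(c),e)\cdot q_d$ with distinct large primes $q_d$ coprime to $e$.

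For this global $z$, Hilbert reciprocity gives $\prod_v(F_d(z),-1)_v=1$. At finite $v\notin T\cup\{q_d\}$ the value $F_d(z)$ is a unit, so the symbol is $1$; at $\infty$ it is $1$ because $F_d(z)>0$; at $v\in T$ it equals $(F_d(P_v),-1)_v$ by local constancy; and $(F_d(z),-1)_{q_d}=\leg{-1}{q_d}$ since $v_{q_d}(F_d(z))=1$. Because $a$ is odd, $\phi_d>0$ on $\R$ for every divisor $d>1$ of $a$ and $F_1(P_\infty)>0$ by \Cref{real} (we may take $P_\infty\in U(\R)$, as $I_\infty=\{0\}$ is constant), so $(F_d(P_\infty),-1)_\infty=1$; combined with the displayed identity this forces $\leg{-1}{q_d}=1$, i.e. $q_d\equiv1\bmod4$. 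Every prime $q\equiv3\bmod4$ dividing $m-z^k$ therefore divides some $\gcd(F_d(c),e)$ and so lies in $T$, where it occurs to even order since $(m-z^k,-1)_q=(m-z_q^k,-1)_q=1$ (\Cref{quathilblemma}); hence $m-z^k$ is a positive sum of two squares, which yields the sought integral point. I expect the main difficulty to lie in verifying the hypotheses of \Cref{schinzelhelp} — the mutual coprimality and primality of the $q_d$ (governed by the resultants absorbed into $T$) and the valuation bound $v_p(F_d(c))\le v_p(e)$ at the bad primes — together with the sign bookkeeping at $\infty$, which is precisely where the oddness of $a$ (forcing $\phi_d>0$) and \Cref{real} are needed.
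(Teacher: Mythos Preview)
Your proof is correct and follows the same route as the paper: factor $m-Z^k$ into the irreducible pieces $F_d$ via cyclotomic polynomials, deduce $\prod_v(F_d(z_v),-1)_v=1$ for every $d\mid a$ by induction from membership in the various $S(a/d,db,n^d)$, apply \Cref{schinzelhelp}, and conclude that the resulting primes $q_d$ are ${\equiv}\,1\bmod4$ so that $m-z^k$ is a sum of two squares. The only difference is cosmetic---you extract $q_d\equiv1\bmod4$ via Hilbert reciprocity for $(F_d(z),-1)$ combined with positivity at $\infty$, whereas the paper carries out the equivalent direct $2$-adic computation $r_2\!\big(f_d(-z)/\gcd(f_d(c),e)\big)\equiv1\bmod4$; note also that $q_d\notin T$ is automatic (since $v_p(F_d(z))=v_p(F_d(c))$ for $p\in T$), so your resultant enlargement of $T$ is harmless but unnecessary, and that at $p=2$ you need $v_2(e)\geq v_2(F_d(c))+2$ rather than merely $>v_2(F_d(c))$ for local constancy of the Hilbert symbol.
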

\begin{proof}
Let $a$ be the largest divisor of $k$ such that $m$ is an $a$-th power. Let $n:=\sqrt[a]m$ and $b:=\frac ka$.
Consider the factorization\footnote{From now on, ``divisor'' will mean ``\emph{positive} divisor''.}
\begin{align*}
X^{ab}+n^a
&=-n^a\left(\left(-\frac{X^b}n\right)^a-1\right)
=-n^a\prod_{d\mid a}\phi_d\left(-\frac{X^b}n\right)\\
&=\prod_{d\mid a}(-n)^{\varphi(d)}\phi_d\left(-\frac{X^b}n\right).
\end{align*}
The last equality follows from the fact that $\sum_{d\mid a}\varphi(d)=a$.

Let $f_d(X):=(-n)^{\varphi(d)}\phi_d\left(-\frac{X^b}n\right)$ for each divisor $d$ of $a$.

The polynomials $f_d(X)\in\Q[X]$ are irreducible according to \Cref{phiirred} and the choice of $a$.
Moreover $f_d(X)\in\Z[X]$ as $\phi_d(Y)$ has integral coefficients and degree $\varphi(d)$.

Take $(x_v,y_v,z_v)_v\in L$, a finite set $T\subseteq\Omega\setminus\{\infty\}$ and $t\geq0$. We have to show that there is some $(x,y,z)\in \X(\Z)$ such that $v_p(z_p-z)\geq t$ for all $p\in T$.

The set $L$ is open, as it is the intersection of finitely many sets $S(a,b,n)$, which are themselves open as the map $\prod_v\X(\Z_v)\rightarrow\{0,1/2\}$ given by $(x_v,y_v,z_v)_v\mapsto\sum_v\inv_v(A(x_v,y_v,z_v))$ is locally constant. As $U(\Q_v)\cap \X(\Z_v)$ is dense in $\X(\Z_v)$ for each $v\in\Omega$, we can hence assume that $n^a-z_p^{ab}\neq0$ for all primes $p$.

{\parindent=1cm
\hangindent=1cm
\textbf{Claim.} For each $d\mid a$ we have $\prod_p(f_d(-z_p),-1)=1$ where the product runs over all primes $p$ (in particular $(f_d(-z_p),-1)=1$ for almost all primes $p$).

\emph{Proof.} The factors $(f_d(-z_p),-1)$ are well-defined as $f_d(-z_p)\neq0$ due to $n^a-z_p^{ab}\neq0$.

As $(x_v,y_v,z_v)_v\in S(\frac ad,db,n^d)$, we conclude that $\prod_p(n^d-z_p^{db},-1)=1$. But
\[
X^{db}+n^d=\prod_{d'\mid d}(-n)^{\varphi(d')}\phi_{d'}\left(-\frac{X^b}n\right)=\prod_{d'\mid d}f_{d'}(X),
\]
so
\[
1=\prod_p(n^d-z_p^{db},-1)=\prod_p\prod_{d'\mid d}(f_{d'}(-z_p),-1).
\]

The result follows by induction by $d$.
\qed
}

Assume without loss of generality that $2\in T$ and that for each prime $p$: if $p\in T$, then $t\geq v_p(f_d(-z_p))+2$ for all $d\mid a$ and if $(f_d(-z_p),-1)\neq1$ for some $d\mid a$, then $p\in T$.

The leading coefficient of $f_d(X)$ is 1 and its degree is $b\varphi(d)>0$, so $f_d(u)\rightarrow\infty$ for $u\rightarrow\infty$.
Furthermore $\gcd\{\prod_{d\mid a}f_d(u)\mid u\in\Z\}=\gcd\{u^{ab}+n^a\mid u\in\Z\}=1$, as it divides $\gcd(n^a,1+n^a)=1$.

The Chinese remainder theorem shows that there is some $c\in\Z$ such that $c\equiv -z_p\mod p^t$ for each $p\in T$.

Let $e:=\prod_{p\in T}p^t$.
Now $v_p(f_d(c))=v_p(f_d(-z_p))\leq t-2<t=v_p(e)$ for each $p\in T$ and each $d\mid a$.
Hence applying \Cref{schinzelhelp} proves that there is some $z\in\Z$ such that $-z\equiv c\mod e$ (in particular $v_p(z_p-z)\geq\min(v_p(z_p+c),v_p(-z-c))\geq t$ for each prime $p\in T$) and $\frac{f_d(-z)}{\gcd(f_d(c),e)}$ is prime for each $d\mid a$.

Therefore $f_d(-z)\equiv f_d(c)\equiv f_d(-z_p)\mod p^t$, so in particular
\[
v_p(f_d(-z))=v_p(f_d(-z_p))
\]
and
\[
r_p(f_d(-z))\equiv r_p(f_d(-z_p))\mod p^2
\]
as $t\geq v_p(f_d(-z_p))+2$ for each $p\in T$.

Hence $r_2(f_d(-z))\equiv r_2(f_d(-z_2))\mod4$. As $\prod_p(f_d(-z_p),-1)=1$ and moreover ${p\in T}$ whenever $(f_d(-z_p),-1)\neq1$ (i.e., whenever $p^{v_p(f_d(-z_p))}\not\equiv1\mod4$), the following congruence holds:
\[
r_2(f_d(-z_2))\equiv\prod_{p\neq2}p^{v_p(f_d(-z_p))}\equiv\prod_{p\in T\setminus\{2\}}p^{v_p(f_d(-z_p))}\equiv r_2(\gcd(f_d(c),e))\mod4.
\]
Together we get $r_2(f_d(-z))\equiv r_2(\gcd(f_d(c),e))\mod4$, so $r_2\left(\frac{f_d(-z)}{\gcd(f_d(c),e)}\right)\equiv1\mod4$. As $\frac{f_d(-z)}{\gcd(f_d(c),e)}$ is prime, it is therefore a sum of two squares.

The product
\[
\prod_{d\mid a}\gcd(f_d(c),e)=\prod_{p\in T}\prod_{d\mid a}p^{v_p(f_d(-z_p))}=\prod_{p\in T}p^{v_p(n^a-z_p^{ab})}
\]
is also a sum of two squares as all primes $p\equiv3\mod4$ occur an even number of times in it because they do so in $n^a-z_p^{ab}=x^2+y^2$.

Now in the factorization
\[
n^a-z^{ab}=\prod_{d\mid a}f_d(-z)=\Bigg(\prod_{d\mid a}\gcd(f_d(c),e)\Bigg)\cdot\Bigg(\prod_{d\mid a}\frac{f_d(-z)}{\gcd(f_d(c),e)}\Bigg)
\]
the first product and each factor of the second product are sums of two squares, so $n^a-z^{ab}$ is a sum of two squares, too.
\end{proof}
\begin{lemma}\label{S1kn}
We have $S(1,k,m)=\prod_v\X(\Z_v)$.
\end{lemma}
\begin{proof}
For each place $v$ and $(x_v,y_v,z_v)\in U(\Q_v)\cap\X(\Z_v)$ we have
\[(m-z_v^k,-1)=(m^1-z_v^{1\cdot k},-1)=1,
\]
so $\inv_v(A(x_v,y_v,z_v))=0$.
\end{proof}
\begin{remark}
\Cref{schinzelthm} does not hold for arbitrary even $k$. For example \cref{zk2} does not have an integral solution for $k=4$ and $m=22$ but it has local solutions $(x_v,y_v,z_v)_v\in \prod_v\X(\Z_v)=S(1,k,m)=L$ given by
\[
z_v=
\begin{cases}
1,&v=2\text{ or }11\\
0,&\text{else}.
\end{cases}
\]
\end{remark}

\begin{corollary}\label{bunyathm}
Assume Schinzel's hypothesis (H) is true.
Let $k$ be an odd positive integer and assume that $m$ is not a $p$-th power for any prime $p\mid k$. Then there exists an integral solution to \cref{zk2}.
\end{corollary}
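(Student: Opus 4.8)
The plan is to derive the corollary from \Cref{schinzelthm}, which under Schinzel's hypothesis identifies $\overline{\X(\Z)}$ with $L$. Since the closure of the empty set is empty, it suffices to prove that $L\neq\emptyset$; the existence of an integral solution then follows immediately. The case $k=1$ is trivial, as $m=0^2+0^2+m^1$, so I would assume $k\geq3$ and run the machinery of the previous section.

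The crucial step is to show that the hypothesis on $m$ collapses the intersection defining $L$ to a single factor. Recall that $L=\bigcap S(a,b,n)$ runs over all triples with $a,b>0$, $k=ab$ and $m=n^a$. I claim the only such triple is $(a,b,n)=(1,k,m)$. Indeed, if $a>1$ then $a$ has a prime divisor $p$, which also divides $k$, and then $m=n^a=(n^{a/p})^p$ exhibits $m$ as a $p$-th power, contradicting the assumption. Hence $L=S(1,k,m)$, and by \Cref{S1kn} this equals $\prod_v\X(\Z_v)$.

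It then remains to check that \cref{zk2} is everywhere locally soluble over the integers, i.e.\ that $\X(\Z_v)\neq\emptyset$ for every place $v$. This is exactly the content of the local computations of the previous section applied with $a=1$ and $b=k$: both are odd because $k$ is odd, and the parity condition ``$n>0$ or $2\nmid ab$'' needed to invoke them holds since $ab=k$ is odd. Concretely, \Cref{real} gives $I_\infty=\{0\}\neq\emptyset$, \Cref{a1I2} gives $I_2=\{0\}\neq\emptyset$, and \Cref{odd} gives $0\in I_p$ for every odd prime $p$; each nonempty $I_v$ witnesses an integral $v$-adic point. Therefore $L=\prod_v\X(\Z_v)\neq\emptyset$, and \Cref{schinzelthm} yields $\overline{\X(\Z)}=L\neq\emptyset$, so $\X(\Z)\neq\emptyset$.

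The essentially only nonroutine point is the reduction in the second paragraph; everything else is a direct appeal to results already established. The main thing to be careful about is that the non-power hypothesis does force $m\neq0$ once $k>1$ (so that we stay within the standing assumption $m\in\Z\setminus\{0\}$), and that it genuinely rules out \emph{every} nontrivial factorization rather than merely some of them---both of which follow from the single observation that a prime dividing $a$ divides $k$.
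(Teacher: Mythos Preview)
Your proof is correct and follows essentially the same route as the paper: identify $L$ with $S(1,k,m)$ via the non-power hypothesis, invoke \Cref{S1kn} to reduce to $\prod_v\X(\Z_v)$, and then cite \Cref{real,a1I2,odd} for local solubility. The extra care you take with the $k=1$ case and with checking $m\neq0$ is not in the paper's one-line proof (which sits under the section's standing assumption $k\geq3$, $m\neq0$), but it does no harm.
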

\begin{proof}
Then $\overline{\X(\Z)}=L=S(1,k,n)=\prod_v\X(\Z_v)\neq\emptyset$ according to \Cref{real,odd,a1I2}.
\end{proof}
\begin{remark}
The proof of \Cref{bunyathm} needs Schinzel's hypothesis (H) only in the case of one polynomial, also known as Bunyakovsky's conjecture (cf. \cite[Conjecture 1]{moroz}).

Under this assumption, this corollary includes the result of Davenport and Heilbronn mentioned above.
\end{remark}
\begin{corollary}[cf.\ \cref{primekm}]\label{primek}
Assume Bunyakovsky's conjecture is true.

Let $k$ be an odd prime. Then there exists an integral solution to \cref{zk2}.
\end{corollary}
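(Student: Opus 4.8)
The plan is to reduce the claim to \Cref{bunyathm} by splitting into two cases according to whether $m$ is a perfect $k$-th power, using crucially that $k$ is prime.

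First I would treat the case where $m$ is not a $k$-th power of an integer. Since $k$ is prime, its only prime divisor is $k$ itself, so the hypothesis ``$m$ is not a $p$-th power for any prime $p\mid k$'' appearing in \Cref{bunyathm} reduces precisely to ``$m$ is not a $k$-th power''. Hence \Cref{bunyathm} applies directly and produces an integral solution to \cref{zk2}. As recorded in the remark following \Cref{bunyathm}, this invocation needs only the one-polynomial case of Schinzel's hypothesis (H), i.e.\ Bunyakovsky's conjecture, which is exactly the assumption available here.

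It then remains to handle the complementary case where $m=n^k$ for some integer $n$ (here $m=0=0^k$ is included as well). In this case there is nothing to deduce from Schinzel's hypothesis at all: the triple $(x,y,z)=(0,0,n)$ satisfies $0^2+0^2+n^k=m$ and is already an integral solution.

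I do not expect any genuine obstacle: all of the substance sits in \Cref{bunyathm}, and the primality of $k$ serves only to collapse the exclusion condition there to the single family of perfect $k$-th powers, a family which the trivial solution $(0,0,n)$ dispatches immediately.
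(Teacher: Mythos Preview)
Your argument is correct and is essentially identical to the paper's proof: split into the case $m=n^k$, where $(0,0,n)$ is a solution, and the case where $m$ is not a $k$-th power, where \Cref{bunyathm} applies because $k$ is its only prime divisor. The paper states this in two lines; your version merely spells out the same two cases with a bit more detail.
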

\begin{proof}
If $m$ is a $k$-th power, then $(0,0,\sqrt[k]m)$ is a solution. Otherwise the previous corollary applies.
\end{proof}

\begin{lemma}\label{abgeneral}
Let $k$ be the product of two odd primes $a$ and $b$ and let $m\in\Z\setminus\{0\}$. 

For the existence of integral solutions to \cref{zk2}, it is necessary and under Schinzel's hypothesis (H) also sufficient that the following two statements are both true.
\begin{itemize}
\item There is no $n\in\Z$ such that $m=n^a$ and $0\not\in I_2(a,b,n)$ and $1/2\not\in I_p(a,b,n)$ for each prime $p\equiv3\mod4$.
\item There is no $n\in\Z$ such that $m=n^b$ and $0\not\in I_2(b,a,n)$ and $1/2\not\in I_p(b,a,n)$ for each prime $p\equiv3\mod4$.
\end{itemize}
\end{lemma}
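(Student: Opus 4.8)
The plan is to reduce both implications to a study of the set $L$, exploiting that for $k=ab$ the only factorizations $k=a'b'$ with $m=n'^{a'}$ and $n'\in\Z$ correspond to the divisors $1,a,b,k$ of $k$: the factorization $(1,k,m)$ is always available but contributes nothing, since $S(1,k,m)=\prod_v\X(\Z_v)$ by \Cref{S1kn}, while $(a,b,n_a)$, $(b,a,n_b)$ and $(k,1,n_k)$ require $m$ to be a perfect $a$-th, $b$-th, resp.\ $k$-th power, with $n_a,n_b,n_k$ uniquely determined as odd roots of $m$. The central computation will be the emptiness criterion that $S(a',b',n')=\emptyset$ if and only if $0\not\in I_2(a',b',n')$ and $1/2\not\in I_p(a',b',n')$ for all $p\equiv3\mod4$, whose right-hand side is exactly the inner condition appearing in the two bullet points.

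To prove this criterion I would argue combinatorially: a point of $\prod_v\X(\Z_v)$ lies in $S(a',b',n')$ precisely when $\sum_v\inv_v(A(x_v,y_v,z_v))=0$, and since each local invariant can be chosen independently within $I_v\subseteq\{0,1/2\}$, we get $S(a',b',n')\neq\emptyset$ exactly when some tuple $(w_v)\in\prod_v I_v$ with almost all $w_v=0$ sums to $0$. By \Cref{real} and \Cref{1mod4} we have $I_\infty=\{0\}$ and $I_p=\{0\}$ for $p\equiv1\mod4$, and by \Cref{odd} we have $0\in I_p$ for every odd $p$; hence the only place that can be forced to contribute $1/2$ is $2$. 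Choosing $w_v=0$ everywhere works unless $0\not\in I_2$, in which case the forced $1/2$ at $2$ can be cancelled if and only if some $p\equiv3\mod4$ has $1/2\in I_p$. This yields the criterion.

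\textbf{Necessity (unconditional).} By the Brauer--Manin obstruction (\Cref{skorothm}) we have $\X(\Z)\subseteq L\subseteq S(a',b',n')$ for every valid factorization. If the first bullet point fails, the witnessing integer is forced to equal $n_a$, so the factorization $(a,b,n_a)$ is valid, and the criterion gives $S(a,b,n_a)=\emptyset$; hence $L=\emptyset$ and $\X(\Z)=\emptyset$. The second bullet point is symmetric, so the existence of an integral solution forces both statements to hold.

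\textbf{Sufficiency (under Schinzel's hypothesis~(H)).} Here \Cref{schinzelthm} gives $\overline{\X(\Z)}=L$, so it suffices to show $L\neq\emptyset$, and I would split on the perfect-power type of $m$. Since $\gcd(a,b)=1$, if $m$ is simultaneously a perfect $a$-th and a perfect $b$-th power then it is a perfect $k$-th power; the main obstacle, namely reconciling several nontrivial Azumaya algebras at once, arises only in this case, and I would sidestep it entirely using the trivial integral solution $(0,0,n_k)$. In every remaining case at most one nontrivial factorization is valid, so $L$ collapses either to $\prod_v\X(\Z_v)$, which is nonempty by local solubility for odd exponents (\Cref{real,odd,6mod8ne,not6mod8,a1I2}), or to a single $S(a,b,n_a)$ resp.\ $S(b,a,n_b)$, whose nonemptiness is exactly the content of the corresponding true bullet point via the emptiness criterion. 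This establishes $L\neq\emptyset$ and hence the existence of an integral solution to \cref{zk2}.
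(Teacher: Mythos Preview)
Your argument is correct and follows essentially the same route as the paper's: both derive the emptiness criterion for $S(a',b',n')$ from \Cref{real,odd,1mod4} together with the $2$-adic lemmas, dispose of the $k$-th-power case via the trivial solution $(0,0,\sqrt[k]{m})$, and in the remaining cases reduce $L$ to a single nontrivial $S$ whose nonemptiness is exactly the negation of the relevant bullet point. The only wrinkle is that the statement also covers $a=b$, where your appeal to $\gcd(a,b)=1$ fails; but then the two bullet points coincide and only the factorization $(a,a,n)$ can appear, so the same reasoning goes through with the obvious simplification.
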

\begin{proof}
For necessity, let $n\in\Z$ and $m=n^a$ and $0\not\in I_2(a,b,n)$ and $1/2\not\in I_p(a,b,n)$ for each prime $p\equiv3\mod4$. Then $S(a,b,n)=\emptyset$ according to \Cref{real,1mod4}. Hence $\X(\Z)\subseteq S(a,b,n)=\emptyset$.

Conversely, if $m$ is an $ab$-th power, then $(0,0,\sqrt[ab]m)$ is a solution.

If $m$ is neither an $a$-th power nor a $b$-th power, then \Cref{bunyathm} proves the claim.

Let therefore without loss of generality $m$ be an $a$-th power but not an $ab$-th power, so there is some $n\in\Z$ such that $m=n^a$. Now the first statement given above implies that $0\in I_2(a,b,n)$ or $1/2\in I_p(a,b,n)$ for some prime $p\equiv3\mod4$. Together with \Cref{real,odd,6mod8,not6mod8} this shows that $S(a,b,n)\neq\emptyset$.

Moreover $S(1,ab,m)=\prod_v\X(\Z_v)$ according to \Cref{S1kn}.

Hence $\overline{\X(\Z)}=S(1,ab,m)\cap S(a,b,n)=S(a,b,n)\neq\emptyset$.
\end{proof}

\begin{theorem}[cf.\ \cref{abthmm}]\label{abthm}
Let $k$ be the product of two primes $a,b\equiv1\mod4$ and let $m\in\Z\setminus\{0\}$.

For the existence of integral solutions to \cref{zk2}, it is necessary and under Schinzel's hypothesis (H) also sufficient that the following two statements are both true.
\begin{itemize}
\item There is no $n\equiv6\mod8$ such that $m=n^a$ and for each prime $p\equiv3\mod4$ dividing $n$:

$b\nmid v_p(n)$ or $2\mid v_p(n)$ or there is no $z'\in\{0,\dots,p-1\}$ such that
\[p\mid r_p(n)^{a-1}+\dots+z'^{(a-1)b}.
\]
\item There is no $n\equiv6\mod8$ such that $m=n^b$ and for each prime $p\equiv3\mod4$ dividing $n$:

$a\nmid v_p(n)$ or $2\mid v_p(n)$ or there is no $z'\in\{0,\dots,p-1\}$ such that
\[p\mid r_p(n)^{b-1}+\dots+z'^{(b-1)a}.
\]
\end{itemize}
\end{theorem}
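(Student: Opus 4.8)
The plan is to derive this theorem directly from \Cref{abgeneral} by making the abstract local conditions on $I_2(a,b,n)$ and $I_p(a,b,n)$ completely explicit, the key extra input being that $a,b\equiv1\mod4$ (and not merely odd) forces $p\nmid ab$ for every prime $p\equiv3\mod4$. Since primes $a,b\equiv1\mod4$ are in particular odd primes, \Cref{abgeneral} applies verbatim, so it suffices to show that, for $m=n^a$, the conjunction ``$0\not\in I_2(a,b,n)$ and $1/2\not\in I_p(a,b,n)$ for every prime $p\equiv3\mod4$'' is equivalent to ``$n\equiv6\mod8$ and, for every prime $p\equiv3\mod4$ dividing $n$, the stated three-fold alternative holds'' (and symmetrically for $m=n^b$).

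First I would dispose of the place $2$. Because $a,b\equiv1\mod4$ gives $a,b\geq5$, both exponents are odd and $\geq3$, so \Cref{6mod8,not6mod8} together yield $0\not\in I_2(a,b,n)\Leftrightarrow n\equiv6\mod8$; this is the source of the congruence condition on $n$ in the statement.

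Next I would handle the primes $p\equiv3\mod4$, separating the cases $p\nmid n$ and $p\mid n$. If $p\nmid n$, then $p\neq a,b$ forces $p\nmid an$, and \Cref{jagypre1} gives $1/2\not\in I_p(a,b,n)$ for free; such primes impose no condition, which is why the theorem quantifies only over primes $p\equiv3\mod4$ that \emph{divide} $n$. If $p\mid n$, then $p\equiv3\mod4$ together with $a,b\equiv1\mod4$ gives $p\nmid ab$, so \Cref{12insidenmid} applies and yields
\[
1/2\in I_p(a,b,n)\ \Leftrightarrow\ b\mid v_p(n)\ \text{and}\ 2\nmid v_p(n)\ \text{and}\ \exists z'\in\Z:\ p\mid r_p(n)^{a-1}+\dots+z'^{(a-1)b}.
\]
Negating this produces exactly the alternative ``$b\nmid v_p(n)$ or $2\mid v_p(n)$ or no such $z'$ exists'', and since the displayed divisibility depends only on $z'\bmod p$, the existence of $z'\in\Z$ is the same as the existence of $z'\in\{0,\dots,p-1\}$.

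Putting the two translations together converts the first bullet of \Cref{abgeneral} into the first bullet of the theorem, and the second bullet follows by interchanging $a$ and $b$ throughout. This is essentially bookkeeping layered on top of \Cref{abgeneral}; I expect the only real point of care—and the single place where the hypothesis $a,b\equiv1\mod4$ is used beyond oddness—to be the verification that $p\nmid ab$, which is precisely what makes \Cref{12insidenmid} applicable for every relevant $p$ and removes any need to treat the primes $a$ and $b$ separately.
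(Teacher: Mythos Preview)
Your proposal is correct and follows exactly the paper's approach: the paper's proof is a single sentence stating that the condition of \Cref{abgeneral} is equivalent to the condition of the theorem by \Cref{6mod8,not6mod8,jagypre1,12insidenmid}, and you have spelled out precisely how those four lemmas effect this translation, including the key observation that $a,b\equiv1\bmod4$ guarantees $p\nmid ab$ for every prime $p\equiv3\bmod4$.
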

\begin{proof}
The condition of \Cref{abgeneral} is equivalent to the condition of this theorem according to \Cref{6mod8,not6mod8,jagypre1,12insidenmid}.
\end{proof}

\section{Algorithm}\label{algosection}

We give an algorithm to decide for given $m\in\Z$ and odd $k>0$ if the number $m$ is of the form $x^2+y^2+z^k$.

\begin{algorithmic}[1]
\Function{combi}{$a,b,n,p$}
  \State Consider all Hilbert symbols over $\Q_p$.\label{nots}
  \State Let $f_d(Z):=(-n)^{\varphi(d)}\phi_d(-Z^b/n)$.
  \State For each $z\in\Z$ let $w_z:=\{d\text{ divisor of }a\mid (f_d(-z),-1)\neq1\}$.
  \State For each $z\in\Z$ and $t\geq0$ let $G_{t,z}:=\{d\text{ divisor of }a\mid v_p(f_d(-z))+1\geq t\}$.\label{note}
  \If{$p\equiv1\mod4$}
    \State\Return$\{\emptyset\}$
  \Else
    \State $W\gets\emptyset$
    \State $S_0\gets\{0\}$
    \State $t\gets0$
    \While{$S_t\neq\emptyset$}
      \State $S_{t+1}\gets\emptyset$
      \ForAll{$z\in S_t$}
	\If{$(n^a-z^{ab},-1)=1$}
	  \State $W\gets W\cup\{w_z\}$
	\EndIf\label{Rpos1}
	\If{$|G_{t,z}|>1$ or\Statex[6] ($|G_{t,z}|=1$ and $w_z\setminus G_{t,z}\not\in W$ and $w_z\cup G_{t,z}\not\in W$)}
	  \State $S_{t+1}\gets S_{t+1}\cup\{z'\in[0,p^{t+1}-1]\mid z'\equiv z\mod p^t\}$
	\EndIf
      \EndFor
      \State $t\gets t+1$
    \EndWhile
    \State\Return $W$
  \EndIf
\EndFunction
\algstore{alg1}
\end{algorithmic}

\begin{lemma}
Let $a\geq1$ and $b\geq3$ be odd integers, $n$ an integer such that $n$ is not a $q$-th power for any prime divisor $q$ of $b$ (then $n^a-z^{ab}\neq0$ for all $z\in\Z$) and let $p$ be prime.

Let $f_d(Z)$, $w_z$ and $G_{t,z}$ be defined as in lines \ref{nots} to \ref{note}.

Then \Call{combi}{$a,b,n,p$} terminates and returns
\[
C:=\{w_z\mid z\in\{0,1,2,\dots\}\textnormal{ such that }(n^a-z^{ab},-1)=1\}.
\]
\end{lemma}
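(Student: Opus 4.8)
The plan is to reduce to the case $p\equiv3\bmod4$ and then analyze the while-loop by a loop invariant. If $p\equiv1\bmod4$, then \Cref{quathilblemma} gives $(t,-1)=1$ for every $t\in\Q_p^\times$, so $w_z=\emptyset$ and $(n^a-z^{ab},-1)=1$ for all $z$, whence $C=\{\emptyset\}$, exactly the returned value. So assume $p\equiv3\bmod4$, where $(t,-1)=1$ iff $v_p(t)$ is even. Two facts organize the argument: the factorization $n^a-z^{ab}=\prod_{d\mid a}f_d(-z)$ (as in the proof of \Cref{schinzelthm}), and the consequence $(n^a-z^{ab},-1)=1\Leftrightarrow\sum_{d\mid a}v_p(f_d(-z))\text{ is even}\Leftrightarrow|w_z|\text{ is even}$. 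In particular every element of $C$ has even cardinality, and since $w_z$ is put into $W$ only when $(n^a-z^{ab},-1)=1$ for the integer representative $z$, we always have $W\subseteq C$; this is the easy inclusion.

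For correctness I would, for a class represented by $z\in[0,p^t-1]$, set $C_{[z]_t}:=\{w_{z'}\mid z'\equiv z\bmod p^t,\ |w_{z'}|\text{ even}\}$ and prove by induction the invariant $C=W\cup\bigcup_{z\in S_t}C_{[z]_t}$ at the start of each iteration (the base case $t=0$ is $C_{[0]_0}=C$). The key is a stability observation: if $d\notin G_{t,z}$, i.e.\ $v_p(f_d(-z))\le t-2$, then $v_p(f_d(-z'))=v_p(f_d(-z))$ for every $z'\equiv z\bmod p^t$, so whether $d\in w_{z'}$ is already fixed by $z\bmod p^t$. Refining a node replaces its class by the disjoint union of its $p$ child classes, so refinement loses nothing; the content is the non-refinement cases. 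If $|G_{t,z}|=0$, stability gives $w_{z'}=w_z$ throughout the class, so $C_{[z]_t}\subseteq\{w_z\}$, which is recorded when $|w_z|$ is even. If $|G_{t,z}|=1$, stability gives $w_{z'}\in\{w_z\setminus G_{t,z},\,w_z\cup G_{t,z}\}$; these differ by one element of $G_{t,z}$, so exactly one, call it $\omega_z$, has even cardinality, and since $W$ contains only even sets the test in the algorithm holds precisely when $\omega_z\in W$, giving $C_{[z]_t}\subseteq\{\omega_z\}\subseteq W$. Thus the invariant is preserved, and once the loop exits with $S_t=\emptyset$ it reads $W=C$.

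The remaining and, I expect, hardest point is termination. The refined classes form a subtree of the $p$-ary residue tree with at most $p$ children per node, so by König's lemma non-termination would yield an infinite branch $z_0,z_1,\dots$ converging to some $\zeta\in\Z_p$. The polynomials $f_d$ are separable with pairwise disjoint root sets (their roots are $\sqrt[b]n\,\zeta_{bd}$, distinct by \Cref{phiirred} and the shape of $X^{ab}+n^a$), so at most one $f_{d_0}$ vanishes at $-\zeta$; hence $|G_{t,z_t}|\le1$ for large $t$, and $|G_{t,z_t}|=0$ would stop refinement, so $|G_{t,z_t}|=1$ with $\zeta$ a \emph{simple} root of $f_{d_0}$. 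As $C$ is finite and $W$ grows monotonically, $W$ is eventually constant and $\omega_{z_t}$ stabilizes to a fixed even set $\omega$ (its part away from $d_0$ being fixed by stability). Using $v_p(f_{d_0}(-z'))=v_p(z'-\zeta)+v_p(f_{d_0}'(-\zeta))$ near the simple root, the $p-1$ off-branch children of $z_t$ all have $v_p(f_{d_0}(-\cdot))=t+v_p(f_{d_0}'(-\zeta))$, whose parity alternates with $t$; so at infinitely many levels these children are integers realizing $w=\omega$ with even cardinality and therefore insert $\omega$ into $W$. Once $\omega\in W$, the non-refinement test for the branch node is met and the branch stops, contradicting its infiniteness. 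Hence the tree is finite, $S_t$ is eventually empty, the algorithm terminates, and by the invariant it returns $W=C$. The delicate step throughout is this parity-alternation argument guaranteeing that the single relevant even completion is captured, which in turn relies on the simplicity of the root afforded by \Cref{phiirred}.
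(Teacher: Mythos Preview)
Your loop-invariant formulation of correctness is essentially the paper's argument, stated more crisply; the stability step and the two pruning cases match. Note in passing that the equivalence $(n^a-z^{ab},-1)=1\Leftrightarrow|w_z|$ even follows already from multiplicativity of the Hilbert symbol and does not need the valuation-parity description you invoke; this matters below.

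The genuine gap is the case $p=2$. After disposing of $p\equiv1\bmod4$ you write ``assume $p\equiv3\bmod4$'', but the algorithm's else-branch also covers $p=2$, and the lemma is stated for arbitrary primes. Your termination argument hinges on the claim that the off-branch children of $z_t$ satisfy $v_p(f_{d_0}(-z'))=t+v_p(f_{d_0}'(-\zeta))$ and that \emph{the parity of this quantity decides whether $d_0\in w_{z'}$}. That last clause is the characterization $(u,-1)=1\Leftrightarrow 2\mid v_p(u)$, valid only for $p\equiv3\bmod4$. At $p=2$ the relevant condition is $r_2(u)\equiv1\bmod4$; near the simple root one has $r_2(f_{d_0}(-z'))\equiv r_2(z'-\zeta)\cdot r_2(f_{d_0}'(-\zeta))\bmod4$, and $r_2(z'-\zeta)\bmod4$ is governed by two consecutive $2$-adic digits of $\zeta$, not by the parity of $t$. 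The paper avoids this by arguing instead that $\omega\in C$: from $n^a-(\overline z\pm p^s)^{ab}\equiv\mp ab\,\overline z^{\,ab-1}p^s\pmod{p^{2s}}$ one chooses the parity of $s$ (for $p\equiv3\bmod4$) or the sign (for $p=2$) to force $(n^a-z'^{ab},-1)=1$, and then invokes the already-established fact that every element of $C$ eventually enters $W$. Your direct-insertion route can be repaired at $p=2$ by noting that $\zeta\notin\Q$ (since $f_{d_0}$ is irreducible over $\Q$ of degree $b\varphi(d_0)\ge3$), so its $2$-adic expansion is not eventually periodic and both residues $r_2(z'-\zeta)\equiv1,3\bmod4$ recur infinitely often; but as written the proof is incomplete.
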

\begin{proof}
For $p\equiv1\mod4$ the result is immediate, so let $p\not\equiv1\mod4$.

The algorithm describes a pruned breadth-first search\footnote{i.e., a breadth-first search in which insignificant branches are ignored} on the infinite directed graph with the following node set $V$ and edge set $E$:
\[
V:=\{(t,z)\mid t\geq0\text{ and }0\leq z\leq p^t-1\}\]
\[E:=\{((t,z),(t+1,z'))\in V^2\mid z\equiv z'\mod p^t\}.
\]
Each time a node $(t,z)$ with $(n^a-z^{ab},-1)=1$ is visited, $w_z$ is appended to $W\subseteq C$.

In this graph there is a path from $(t,z)\in V$ to $(t',z')\in V$ if and only if $t\leq t'$ and ${z\equiv z'\mod p^t}$.

Obviously every node can be reached from $(0,0)$.
Therefore a complete breadth-first search would eventually find every $c\in C$.

Let $(t,z)$ and $(t',z')$ be nodes such that $(t',z')$ is reachable from $(t,z)$, i.e., such that ${z'\equiv z\mod p^t}$. If $d\not\in G_{t,z}$ is a divisor of $a$, then $v_p(f_d(-z))<t-1$. Then $f_d(-z')\equiv f_d(-z)\mod p^t$ implies $v_p(f_d(-z'))=v_p(f_d(-z))$ and $r_p(f_d(-z'))\equiv r_p(f_d(-z))\mod p^2$, so $(f_d(-z'),-1)=(f_d(-z),-1)$. Hence $w_{z'}\setminus G_{t,z}=w_z\setminus G_{t,z}$.

In particular $w_{z'}=w_z$ if $G_{t,z}=\emptyset$. Therefore the breadth-first search does not have to be continued from $(t,z)$ on if $G_{t,z}=\emptyset$.

Every set $c\in C$ has an even number of elements as $\prod_{d\mid a}(f_d(-z),-1)=(n^a-z^{ab},-1)$ for each $z\in\Z$. Hence for each subset $w'$ of the set of divisors of $a$ and each divisor $g$ of $a$ at least one of the sets $w'\setminus\{g\}$ and $w'\cup\{g\}$ is not contained in $C$.

Therefore, if $G_{t,z}=\{g\}$ for some divisor $g$ of $a$ and $w_z\setminus\{g\}$ or $w_z\cup\{g\}$ has already been found (and is therefore contained in $C$), then the breadth-first search does not have to be continued from $(t,z)$, either.

Hence altogether the above algorithm finds every element of $C$, so the only remaining question is whether it terminates in a finite amount of time.

Assume it does not.
Then there has to be an infinite path $(0,z_0)\rightarrow(1,z_1)\rightarrow(2,z_2)\rightarrow\dots$ of which every edge is visited during the breadth-first search.
The definition of the edge set $E$ proves that $z_t$ converges to some $\overline z\in\Z_p$ such that ${\overline z\equiv z_t\mod p^t}$ for each $t\geq0$.

If $d\in G_{t+1,z_{t+1}}$, then $f_d(-z_t)\equiv f_d(-z_{t+1})\equiv0\mod p^t$, so $d\in G_{t,z_t}$.

Hence $G_{0,z_0}\supseteq G_{1,z_1}\supseteq G_{2,z_2}\supseteq\dots$.

If $G_{t,z_t}$ was empty for some $t$, then the breadth-first search would not continue from the node $(t,z_t)$ on.
Hence $\bigcap_{t\geq0}G_{t,z_t}\neq\emptyset$.

If $g\in\bigcap_{t\geq0}G_{t,z_t}$, then $f_g(-\overline z)\equiv f_g(-z_t)\equiv0\mod p^{t-1}$ for all $t\geq1$, so $f_g(-\overline z)=0$. However, the polynomials $f_d(Z)$ with $d\mid a$ are irreducible (according to \Cref{phiirred}) and pairwise distinct, so no two of them have any common roots. Hence $\bigcap_{t\geq0}G_{t,z_t}$ contains exactly one element $g$. Let $T\geq0$ such that $G_{T,z_T}=\{g\}$.
Then $w_{z_t}\setminus\{g\}=w_{z_T}\setminus\{g\}$ for each $t\geq T$.

As the breadth-first search continues at every node $(t,z_t)$, it follows that neither $w_{z_T}\setminus\{g\}$ nor $w_{z_T}\cup\{g\}$ are found. As every element of $C$ is eventually found, this shows that $w_{z_T}\setminus\{g\},w_{z_T}\cup\{g\}\not\in C$.

However,
\[n^a-(\overline z\pm p^s)^{ab}\equiv n^a-\overline z^{ab}\mp ab\overline z^{ab-1}p^s\equiv\mp ab\overline z^{ab-1}p^s\mod p^{2s}
\]
(as $f_g(-\overline z)=0$ and $f_g(Z)$ divides $n^a-Z^{ab}$). Therefore (as $\overline z\neq0$ due to $n^a-\overline z^{ab}=0$), by choosing $s$ sufficiently large and of the correct parity and the appropriate sign, we get some $z'\equiv\overline z\mod p^T$ such that $(n^a-z'^{ab},-1)=1$, so $w_{z'}\in C$. Moreover, $w_{z'}\setminus\{g\}=w_{z_T}\setminus\{g\}$ because of $G_{t,z_t}=\{g\}$. This proves that $w_{z_T}\setminus\{g\}\in C$ or $w_{z_T}\cup\{g\}\in C$, which is a contradiction.

Therefore the algorithm terminates in a finite amount of time.
\end{proof}

Let $\bigtriangleup$ denote the symmetric difference (i.e., $A\bigtriangleup B=(A\setminus B)\cup(B\setminus A)$).

Consider now the following algorithm:

\begin{algorithmic}[1]
\algrestore{alg1}
\Function{ispossible}{$k,m$}
  \If{$\sqrt[k]m\in\Z$}
    \State \Return true
  \Else
    \State $a\gets\max\{d\text{ divisor of }k\mid m\text{ is $d$-th power}\}$
    \State $b\gets \frac ka$
    \State $n\gets\sqrt[a]m$
    \State $T\gets\{\emptyset\}$
    \ForAll{$p\mid 2an$ prime}
      \State $W\gets$ \Call{combi}{$a,b,n,p$}
      \State $T\gets\{t\bigtriangleup w\mid t\in T, w\in W\}$
    \EndFor\label{Tfinished}
    \State \Return $\emptyset\in T$
  \EndIf
\EndFunction
\end{algorithmic}

\begin{theorem}
Let $k\geq1$ be odd and $m\in\Z$.

Then \Call{ispossible}{$k,m$} always terminates. If it returns ``false'', then $m$ is not of the form $x^2+y^2+z^k$ for integers $x,y,z$. If Schinzel's hypothesis (H) is true, then the converse also holds. 
\end{theorem}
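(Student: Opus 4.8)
The plan is to prove that the test performed by \textsc{ispossible}$(k,m)$ --- whether $\emptyset\in T$ --- is equivalent to $L\neq\emptyset$, and then to read off both directions from \Cref{skorothm} and \Cref{schinzelthm}. The branch $\sqrt[k]m\in\Z$ is immediate, since $(0,0,\sqrt[k]m)$ is then an integral solution; so assume we are in the \textbf{else} branch. There $m$ is not a $k$-th power, hence $a<k$ and $b=k/a\geq3$ is odd, and $n=\sqrt[a]m\neq0$; by maximality of $a$ and unique factorization $n$ is not a $q$-th power for any prime $q\mid b$, for otherwise $m=n^a$ would be an $aq$-th power with $aq\mid k$. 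Thus the hypotheses of the lemma on \textsc{combi} hold for every prime $p\mid2an$, so each call terminates and returns $C_p=\{w_z\mid(n^a-z^{ab},-1)=1\}$; as only finitely many primes divide $2an$ and every set operation is on subsets of the finite set of divisors of $a$, \textsc{ispossible} terminates.

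The core is to translate the Brauer--Manin conditions cutting out $L$ into conditions on the sets $w_z$. By unique factorization the divisors $a'$ of $a$ are exactly the divisors of $k$ for which $m$ is an $a'$-th power, the associated data being $(a',\,b'=k/a',\,n'=n^{a/a'})$, and $L=\bigcap_{a'\mid a}S(a',b',n')$. Write $A_{a'}$ for the Azumaya algebra of \Cref{azuconst} attached to $(a',b',n')$. The cyclotomic computation in the proof of \Cref{schinzelthm} gives $n^a-z^{ab}=\prod_{d\mid a}f_d(-z)$, and the same computation applied to $n^{a/a'}-z^{b(a/a')}$ yields $n'-z^{b'}=\prod_{d\mid a/a'}f_d(-z)$. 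Hence, for a solvable local point with $n^a-z^{ab}\neq0$, the lemma following the construction of $A$ shows
\[
\inv_p(A_{a'}(x,y,z))=0\iff |\{d\in w_z:d\mid(a/a')\}|\text{ is even}.
\]
Writing $W:=\bigtriangleup_v w_{z_v}$ for the symmetric difference over all places (the archimedean place contributing $\emptyset$ by $I_\infty=\{0\}$ from \Cref{real}), a family of solvable local points lies in $L$ iff for every $a'\mid a$ the number of places with $\inv_p(A_{a'})=1/2$ is even, i.e.\ iff $|W\cap\{d:d\mid c\}|$ is even for every $c\mid a$. By Möbius inversion over the divisor lattice of $a$ this holds if and only if $W=\emptyset$; indeed the forward implication is precisely the Claim in the proof of \Cref{schinzelthm}, and the converse runs the same computation backwards.

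Next I would show that only primes $p\mid2an$ can produce a nonempty $w_{z_p}$. For $p\equiv1\bmod4$ every symbol $(t,-1)$ is trivial (cf.\ \Cref{1mod4}), so $w_z=\emptyset$ for all $z$; for $p\equiv3\bmod4$ with $p\nmid an$ we have $p\nmid a'n'$ for all $a'\mid a$, so \Cref{jagypre1} gives $1/2\notin I_p(a',b',n')$, and together with \Cref{odd} this forces every $\inv_p(A_{a'})$ to vanish on solvable points, whence $w_z=\emptyset$ for all solvable $z$ by the Möbius argument again. Therefore $W=\bigtriangleup_{p\mid2an}w_{z_p}$, and (using density of $\Z_{\geq0}$ in $\Z_p$ and local constancy of $w_z$, so that the $C_p$ computed over non-negative integers equals the set of $w_{z_p}$ realizable $p$-adically) the achievable values of $W$ are exactly the symmetric differences $\bigtriangleup_{p\mid2an}w_{z_p}$ with $w_{z_p}\in C_p$. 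This is precisely the set $T$ assembled in the prime loop, the initial value $\{\emptyset\}$ absorbing the harmless places. Hence $\emptyset\in T$ iff some choice of solvable local points gives $W=\emptyset$ iff $L\neq\emptyset$.

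Finally I assemble the two directions. If the algorithm returns ``false'' then $\emptyset\notin T$, so $L=\emptyset$; since $\X(\Z)\subseteq L$ by \Cref{skorothm}, there is no integral solution, and this is unconditional. If it returns ``true'' then $L\neq\emptyset$, and under Schinzel's hypothesis (H) \Cref{schinzelthm} gives $\overline{\X(\Z)}=L\neq\emptyset$, so $\X(\Z)\neq\emptyset$ and $m$ has the desired form. The main obstacle I expect is the bookkeeping of the middle two paragraphs: packaging the invariants of the whole family $\{A_{a'}\}_{a'\mid a}$ into the single datum $w_z$ and establishing, via Möbius inversion, that the simultaneous vanishing of all these obstructions is equivalent to $W=\emptyset$; checking that primes $p\nmid2an$ are inert is a necessary but lighter technical step.
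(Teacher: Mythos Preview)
Your proposal is correct and follows essentially the same route as the paper's proof: both reduce the question to showing $\emptyset\in T\iff L\neq\emptyset$ via the factorization $n^d-z^{db}=\prod_{d'\mid d}f_{d'}(-z)$, the \textsc{combi} lemma, \Cref{real,odd,1mod4,jagypre1} for the places not dividing $2an$, and then conclude with \Cref{schinzelthm}. The only difference is presentational: you spell out the M\"obius/minimal-element argument linking the family of conditions $\prod_v(n^d-z_v^{db},-1)=1$ (indexed by $d\mid a$) to the single condition $\bigtriangleup_v w_{z_v}=\emptyset$, whereas the paper compresses this into a reference to the Claim inside the proof of \Cref{schinzelthm}.
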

\begin{proof}
The case $\sqrt[k]m\in\Z$ is obvious, so assume $\sqrt[k]m\not\in\Z$.

According to the previous lemma (and as $\{0,1,2,\dots\}$ is dense in $\Z_p$ and the map $\Q_p^\times\rightarrow\{\pm1\}$ defined by $u\mapsto (u,-1)$ is locally constant for each prime $p$), the set $T$ can after line \ref{Tfinished} be described as follows:
\[
T=\bigg\{\mathop{\bigtriangleup}\limits_{p\mid 2an}w_{z_p}\ \bigg|\ (x_p,y_p,z_p)_p\in\prod_{p\mid 2an}U(\Q_p)\cap\X(\Z_p)\bigg\}.
\]
Hence $\emptyset\in T$ if and only if there is some $(x_p,y_p,z_p)_p\in\prod_{p\mid 2an}U(\Q_p)\cap\X(\Z_p)$ such that $\prod_{p\mid 2an}(f_d(-z_p),-1)=1$ for each divisor $d$ of $a$.

If $v$ is a prime not dividing $2an$ or $v=\infty$ and $(x_v,y_v,z_v)\in U(\Q_v)\cap\X(\Z_v)$, then, according to \Cref{real,1mod4,jagypre1}, $(n^d-z_v^{db},-1)=1$ for each $d\mid a$.
Furthermore, for each such place $v$ the set $U(\Q_v)\cap\X(\Z_v)$ is nonempty according to \Cref{real,odd}.

The equation
\[
(n^d-z_v^{db},-1)=\prod_{d'\mid d}(f_{d'}(-z_v),-1)
\]
therefore shows (as in the proof of the claim in the proof of \Cref{schinzelthm}) that there is some $(x_v,y_v,z_v)_v\in\prod_vU(\Q_v)\cap\X(\Z_v)$ such that $\prod_v(n^d-z_v^{bd},-1)=1$ for each $d\mid a$ if and only if $\emptyset\in T$.
Therefore $L\neq\emptyset$ if and only if $\emptyset\in T$, so the claim follows with \Cref{schinzelthm}.
\end{proof}

For each odd composite integer $1<k<50$, \Cref{listexceptions} lists values of positive integers $m\leq10^9$ such that \cref{zk2} has no integral solution, determined using our algorithm. The lists might be incomplete if Schinzel's hypothesis (H) is false.

\renewcommand{\arraystretch}{1.17}
\begin{table}[ht]
\caption{List of integers without integral solution}\label{listexceptions}
\begin{tabular}{c|p{11.5cm}}
\textbf{$k$}&\textbf{List of integers $1\leq m\leq10^9$ without integral solution}\\\hline
9&$6^3$, $30^3$, $54^3$, $78^3$, $102^3$, $126^3$, $150^3$, $174^3$, $198^3$, $222^3$, $246^3$, $294^3$, $318^3$, $342^3$, $366^3$, $390^3$, $414^3$, $438^3$, $462^3$, $486^3$, $510^3$, $534^3$, $558^3$, $582^3$, $606^3$, $630^3$, $654^3$, $678^3$, $726^3$, $750^3$, $774^3$, $798^3$, $822^3$, $846^3$, $870^3$, $894^3$, $918^3$, $942^3$, $966^3$, $990^3$\\\hline
15&$6^3$, $30^3$, $54^3$, $78^3$, $102^3$, $126^3$, $150^3$, $174^3$, $198^3$, $222^3$, $246^3$, $270^3$, $294^3$, $318^3$, $342^3$, $366^3$, $390^3$, $414^3$, $438^3$, $462^3$, $510^3$, $534^3$, $558^3$, $582^3$, $606^3$, $630^3$, $654^3$, $678^3$, $702^3$, $726^3$, $750^3$, $774^3$, $798^3$, $822^3$, $846^3$, $870^3$, $894^3$, $918^3$, $942^3$, $966^3$, $990^3$,\hfill\par
$6^5$, $14^5$, $22^5$, $30^5$, $38^5$, $46^5$, $54^5$, $62^5$\\\hline
21&$6^3$, $30^3$, $54^3$, $78^3$, $102^3$, $126^3$, $150^3$, $174^3$, $198^3$, $222^3$, $246^3$, $270^3$, $294^3$, $318^3$, $342^3$, $366^3$, $390^3$, $414^3$, $438^3$, $462^3$, $486^3$, $510^3$, $534^3$, $558^3$, $582^3$, $606^3$, $630^3$, $654^3$, $678^3$, $702^3$, $726^3$, $750^3$, $774^3$, $798^3$, $822^3$, $846^3$, $870^3$, $894^3$, $918^3$, $942^3$, $966^3$, $990^3$,\hfill\par
$14^7$\\\hline
25&$6^5$, $14^5$, $22^5$, $30^5$, $38^5$, $46^5$, $54^5$, $62^5$\\\hline
27&$6^3$, $30^3$, $46^3$, $54^3$, $62^3$, $78^3$, $102^3$, $118^3$, $126^3$, $150^3$, $174^3$, $198^3$, $206^3$, $222^3$, $246^3$, $262^3$, $270^3$, $278^3$, $294^3$, $318^3$, $334^3$, $342^3$, $366^3$, $390^3$, $414^3$, $422^3$, $438^3$, $462^3$, $478^3$, $486^3$, $494^3$, $510^3$, $534^3$, $550^3$, $558^3$, $582^3$, $606^3$, $630^3$, $638^3$, $654^3$, $678^3$, $694^3$, $702^3$, $710^3$, $726^3$, $750^3$, $766^3$, $774^3$, $798^3$, $822^3$, $846^3$, $854^3$, $870^3$, $894^3$, $910^3$, $918^3$, $926^3$, $942^3$, $966^3$, $982^3$, $990^3$,\hfill\par
$6^9$\\\hline
33&$6^3$, $30^3$, $54^3$, $78^3$, $102^3$, $126^3$, $150^3$, $174^3$, $198^3$, $222^3$, $246^3$, $270^3$, $294^3$, $318^3$, $342^3$, $366^3$, $390^3$, $414^3$, $438^3$, $462^3$, $486^3$, $510^3$, $534^3$, $558^3$, $582^3$, $606^3$, $630^3$, $654^3$, $678^3$, $702^3$, $726^3$, $750^3$, $774^3$, $798^3$, $822^3$, $846^3$, $870^3$, $894^3$, $918^3$, $942^3$, $966^3$, $990^3$\\\hline
35&$6^5$, $14^5$, $22^5$, $30^5$, $38^5$, $46^5$, $54^5$, $62^5$,\hfill\par
$14^7$\\\hline
39&$6^3$, $30^3$, $54^3$, $78^3$, $102^3$, $126^3$, $150^3$, $174^3$, $198^3$, $222^3$, $246^3$, $270^3$, $294^3$, $318^3$, $342^3$, $366^3$, $390^3$, $414^3$, $438^3$, $462^3$, $486^3$, $510^3$, $534^3$, $558^3$, $582^3$, $606^3$, $630^3$, $654^3$, $678^3$, $702^3$, $726^3$, $750^3$, $774^3$, $798^3$, $822^3$, $846^3$, $870^3$, $894^3$, $918^3$, $942^3$, $966^3$, $990^3$\\\hline
45&$6^3$, $30^3$, $54^3$, $78^3$, $102^3$, $126^3$, $150^3$, $174^3$, $198^3$, $222^3$, $246^3$, $270^3$, $294^3$, $318^3$, $342^3$, $366^3$, $390^3$, $414^3$, $438^3$, $462^3$, $486^3$, $510^3$, $534^3$, $558^3$, $582^3$, $606^3$, $630^3$, $654^3$, $678^3$, $702^3$, $726^3$, $750^3$, $774^3$, $798^3$, $822^3$, $846^3$, $870^3$, $894^3$, $918^3$, $942^3$, $966^3$, $990^3$,\hfill\par
$6^5$, $14^5$, $22^5$, $30^5$, $38^5$, $46^5$, $54^5$, $62^5$,\hfill\par
$6^9$\\\hline
49&$14^7$

\end{tabular}
\end{table}
\renewcommand{\arraystretch}{1}


\bibliographystyle{alpha}
\bibliography{bibtex}

\end{document}